\documentclass[11pt]{amsart}
\usepackage[utf8]{inputenc}
\usepackage[nospace,noadjust]{cite}
\usepackage{relsize}
\usepackage{cite}
\usepackage{color}
\usepackage[dvipsnames]{xcolor}
\usepackage{marginfix}
\usepackage{cancel,marginnote}

\usepackage{tikz}
\usetikzlibrary{arrows.meta, positioning}

\usepackage{tikz-cd}
\usetikzlibrary{arrows.meta, calc, decorations.markings}
\usepackage{colortbl}
\usepackage{hyperref, enumitem}
\hypersetup{
	colorlinks   = true, %Colours links instead of ugly boxes
	urlcolor     = blue, %Colour for external hyperlinks
	linkcolor    = Purple, %Colour of internal links%
	citecolor   = red %Colour of citations
}
\usepackage[margin=2.5cm]{geometry}

\usepackage{comment}
\usepackage{hyperref}
\usepackage{cleveref,amssymb}

\usepackage{amscd}

\DeclareMathOperator{\C}{\mathcal{C}}

\newtheorem{theorem}{Theorem}[section]

\newtheorem{lemma}[theorem]{Lemma}

\newtheorem{corollary}[theorem]{Corollary}
\newtheorem{definition}[theorem]{Definition}
\newtheorem{proposition}[theorem]{Proposition}

\newtheorem{remark}[theorem]{Remark}

\usepackage{mdframed}

\usepackage{url}

\newcommand{\fqn}{\mathbb{F}_{q^n}}

\newcommand{\cC}{{\mathcal C}}
\newcommand{\cS}{{\mathcal S}}

\newcommand{\cH}{{\mathcal H}}

\newcommand{\F}{{\mathbb F}}
\newcommand{\KK}{{\mathbb K}}

\newcommand{\K}{{\mathbb K}}
\newcommand{\fq}{{\mathbb F}_{q}}

\newcommand{\Stab}{\mathrm{Stab}}

\newcommand{\N}{\mathrm{N}}

\newcommand{\im}{\mathrm{Im}}
\newcommand{\rk}{\mathrm{rk}}

\newcommand{\Gal}{\mathrm{Gal}}
\newcommand{\GL}{\mathrm{GL}}
\newcommand{\End}{\mathrm{End}}
\newcommand{\Aut}{\mathrm{Aut}}
\newcommand{\LL}{\mathbb{L}}

\newcommand{\id}{\mathrm{id}}

\newcommand{\gcrd}{\mathrm{gcrd}}
\newcommand{\lclm}{\mathrm{lclm}}
\newcommand{\Ann}{\mathrm{Ann}}

\newcommand{\Snsk}{\mathcal{S}_{n,s,k}}
\newcommand{\Sk}{\mathcal{S}_{k}}

\newcommand{\Cen}{\mathrm{Cen}}

\newcommand{\EF}{{\mathbb E}_F}

\newcommand{\lid}{\mathcal{I}_{\ell}}
\newcommand{\rid}{\mathcal{I}_{r}}

\newcommand{\E}{\mathbb{E}}

\usepackage{enumitem}

\title{On the autotopism groups and the equivalence of finite cyclic semifields}

\author[P. Santonastaso]{Paolo Santonastaso}
\address{Paolo Santonastaso, \textnormal{Department of Mathematics and Applications ``R. Caccioppoli'', University of Naples Federico II, Via Cintia, Monte Sant'Angelo, 80126 Naples, Italy \newline
		Department of Mechanics, Mathematics and Management, Polytechnic University of Bari, 
		70125 Bari, Italy }}\email{paolo.santonastaso@unina.it}

\author[Y. Zhou]{Yue Zhou}
\address{Yue Zhou, \textnormal{College of Science, National University of Defense Technology, 410073 Changsha,
		China}}
\email{yue.zhou.ovgu@gmail.com}

\date{\today}

\begin{document}

	\begin{abstract}
		Special cases of finite cyclic semifields were first constructed by Hughes and Kleinfeld in 1960, and later by Sandler in 1962 and Knuth in 1965. The general construction of cyclic semifields was subsequently introduced by Petit in 1966, and later rediscovered from the perspective of irreducible semilinear transformations by Jha and Johnson in 1989. Since Sandler's foundational work in 1962, the complete determination of the autotopism groups of cyclic semifields and the full resolution of the isotopy problem for this family have remained long-standing open problems. The most significant advances in determining these autotopism groups are due to Dempwolff in 2011, who left open the case in which the field extension degree strictly divides the degree of the polynomial defining the semifield.
		
		In this paper, we provide a complete classification of cyclic semifields up to isotopy, together with the full determination of their autotopism groups, thereby closing the remaining cases left open by Dempwolff. Since cyclic semifields arise as a special instance of a broader family of maximum rank distance (MRD) codes constructed via skew polynomials, our methods also yield a complete classification of these MRD codes up to linear and semilinear equivalence over the prime field, together with an explicit description of their full automorphism groups.
	\end{abstract}
	
	\maketitle
	{\textbf{Keywords}: Cyclic semifield; autotopism group; rank-metric code; semilinear transformation; skew polynomial}
	
	{\textbf{MSC2020}: 12K10; 11T71; 16S36; 15A04 }

	\section{Introduction}
	
	A \emph{(finite) semifield} is an algebraic structure satisfying all the axioms of a skewfield except, possibly, associativity. Equivalently, it is a finite division algebra that need not be associative. If the existence of a multiplicative identity is not required, one speaks of a \emph{presemifield}. Finite fields provide the associative examples, and Wedderburn's theorem shows that associativity in the finite case already forces commutativity. In this sense, finite semifields may be regarded as the natural nonassociative counterparts of finite fields.
	
	The theory begins with Dickson's 1906 construction of the first genuinely nonassociative finite division algebras \cite{dickson1906commutative}. It was subsequently developed by Albert, who introduced isotopy and the family of generalised twisted fields \cite{albert1961generalized}, and by Knuth, who clarified the connections between semifields, projective planes, and nonsingular tensors \cite{knuth1965finite}. Since then, finite semifields have occupied a central place in the interaction between algebra, finite geometry, and, more recently, coding theory. We refer to \cite{lavrauw2011finite,polverino2010linear,kantor2006finite} for general surveys on finite semifields and their connections.
	
	The depth and variety of the subject are reflected in a substantial body of work on the construction, classification, and isotopy of distinguished families of semifields. In this direction, one may point, for instance, to the classification results for semifield flocks obtained in \cite{blokhuis2003classification}, to the structural study of commutative presemifields and semifields in \cite{coulter2008commutative}, and to the two-parameter family introduced in \cite{zhou2013twoparameter}. More recently, G\"olo\u{g}lu and K\"olsch proved an exponential lower bound on the number of pairwise non-isotopic commutative semifields of odd order \cite{gologlu2023exponential}. For even order, this number is much bigger because of the very large family constructed by Kantor \cite{kantor_commutative_2003,kantor2004symplectic} which is a generalization of Knuth's binary semifields \cite{knuth_class_1965}. These results underscore both the richness of the isotopy problem and the difficulty of obtaining complete classification theorems even for highly structured subclasses.
	
	Of particular importance for the present paper is the construction arising from skew-polynomial rings and semilinear transformations. Ore \cite{ore1933theory} and Jacobson \cite{jacobson1937pseudo} studied associative algebras defined via skew-polynomial rings, as generalisations of cyclic algebras and Cayley--Dickson algebras. Let $\K$ be a field, $\sigma$ be an automorphism of $\K$ with fixed field $\F$, the skew polynomial ring 
	$$R:=\K[X;\sigma]=\{
	\alpha_rX^r+\alpha_{r-1}X^{r-1}+\cdots+\alpha_0~:~\alpha_i\in\K\}$$ 
	is defined with the ordinary  addition and the multiplication determined by the rule
	\[
	X \alpha=\sigma(\alpha)X
	\qquad \text{for every } \alpha\in\K.
	\]
	
	Petit later made an explicit construction of semifields using the skew-polynomial rings in \cite{petit1966certains}, while Jha and Johnson rediscovered the same class of objects for finite field $\K$ from the viewpoint of irreducible semilinear transformations \cite{Jha1989analog}. 
	Let \(f\in R\) be a monic irreducible polynomial of degree \(s\). Then the quotient \(R/Rf\) can be identified with the set of skew polynomials of degree at most \(s-1\), and it becomes a nonassociative algebra under the multiplication
	\[
	a\circ b:=ab \bmod_r f.
	\]
	Since \(R\) is a Euclidean domain and \(f\) is irreducible, this multiplication has no zero divisors. Therefore,
	\[
	S_f:=(R/Rf,+,\circ)
	\]
	is a semifield, called the \textbf{cyclic semifield} (or \textbf{Petit algebra}) associated with \(f\).

	One construction of semifields by Knuth \cite{knuth1965finite}, Hughes-Kleinfeld semifields \cite{hughes1960seminuclear} and Sandler's construction \cite{sandler1962autotopism} are all special cases of cyclic semifields; see \cite[Section 4]{brownpumplunsteele2018automorphisms}. It was shown in \cite{lavrauw2013semifields} that the cyclic, or Jha--Johnson, semifields introduced in \cite{Jha1989analog} are isotopic to Petit's semifields.
	
	The study of isotopism between distinct cyclic semifields, as well as their autotopism groups, also has a long history. To the best of our knowledge, it was initiated by Sandler in 1962. In \cite{sandler1962autotopism}, Sandler focused on the special case where $f=X^s-\delta \in \K[X]$ with $\K=\F_{q^n}$ and $\F=\F_q$, and he completely determined the autotopism groups of $R_f$. Up to now, the best results on these autotopism groups were obtained by Dempwolff in \cite{Dempwolff2011autotopism}, who determined them except for the case $n\mid s$ and $n<s$, in which $s=\deg(f)$.	
	Some further results can also be found in \cite{brownpumplun2018automorphisms,brownpumplunsteele2018automorphisms}. Meanwhile, the full solution to the isotopism problem for different cyclic semifields remains open, with only partial results available in \cite{kantor2008semifields,lavrauw2013semifields}.
	
	More recently, semifields have also acquired a prominent role in coding theory, since semifield spread sets give rise to special classes of maximum rank distance (MRD) codes. In particular, cyclic semifields were shown in \cite{sheekey2020new} to arise as special instances of a much broader skew-polynomial construction of MRD codes, encompassing most of the previously known algebraic families; this framework was subsequently extended further in \cite{lobillo2025quotients}, yielding new semifields and MRD codes for infinitely many choices of parameters. This connection places problems of isotopy and autotopism in direct correspondence with questions of equivalence and automorphisms in rank-metric coding theory. For instance, Sheekey raised an open question on the equivalence of these rank-metric codes in \cite[Remark~11]{sheekey2020new}.
	
	\subsection{Our contribution}
	The most important contribution of this paper is the resolution of the following two long-standing open problems:
	\begin{enumerate}[label=(\Roman*)]
		\item A complete classification of cyclic semifields up to isotopy, and, in parallel, the classification of the associated cyclic skew-polynomial MRD codes up to linear and semilinear equivalence.
		\item The full automorphism groups of these codes and, for the special case \(k=1\), the full autotopism groups of the corresponding cyclic semifields.
	\end{enumerate} 
	
	Our results are obtained by combining two complementary descriptions of the same family: the skew-polynomial model and the semilinear-transformation model. Let us consider the skew polynomial ring
	$
	R=\F_{q^n}[X;\sigma],$
	where \(\sigma\) generates \(\Gal(\F_{q^n}/\F_q)\), and let \(F(Y)\in\F_q[Y]\) be a monic irreducible polynomial of degree \(s\). We consider the quotient
	$
	R_F:=R/RF(X^n),$
	which is a simple Artinian ring; indeed,
	$
	R_F\cong M_n(\F_{q^s}).
	$
	For each integer \(1\le k\le n-1\), we consider the family introduced in \cite{sheekey2020new}
	\[
	\mathcal S_k(F):=
	\left\{
	\sum_{i=0}^{sk-1}\alpha_iX^i+RF(X^n)\;:\;\alpha_i\in\F_{q^n}
	\right\}
	\subseteq R_F.
	\]
	After identifying \(R_F\) with a matrix algebra over \(\F_{q^s}\), this is an MRD code in \(M_n(\F_{q^s})\).
	
	We give a description in terms of semilinear transformations of this family. If \(f\in R\) is an irreducible right divisor of \(F(X^n)\), then left multiplication by \(X\) induces a \(\sigma\)-semilinear map
	$
	L_X:R/Rf\rightarrow R/Rf.$
	This map is irreducible. Hence, after choosing a suitable \(\F_{q^n}\)-vector space \(V\) of dimension \(s\), it is conjugate to an irreducible \(\sigma\)-semilinear operator
	\[
	T_F\in \End_{\E}(V),
	\qquad \E\cong\F_{q^s}.
	\]
	Under the induced isomorphism
	$
	R_F\cong \End_{\E}(V),$
	the code \(\Sk(F)\) corresponds to the semilinear code
	\[
	\mathcal C_k(F):=
	\left\{
	\sum_{i=0}^{sk-1}\alpha_i\,T_F^i\;:\;\alpha_i\in\F_{q^n}
	\right\}
	\subseteq \End_{\E}(V).
	\]
	Accordingly, the family may be studied either inside a quotient of the skew-polynomial ring or as the span of powers of an irreducible semilinear operator. The first model is particularly well suited to the internal algebraic structure of the codes, whereas the second is the natural framework for equivalence and automorphism questions.
	
	Our first main result provides complete classifications of these codes up to linear equivalence and semilinear equivalence over the prime field, respectively.
	For $(\lambda,\tau) \in \F_q^\times \rtimes \Gal(\F_q/\F_p)$, we define its group action on $\mathcal I_{q,s}:=
	\{F(Y)\in\F_q[Y]: F \text{ monic irreducible of degree }s\}$ as follows:
	\begin{equation*}
		%		\label{eq:action.on.Iqs}
		(\lambda,\tau)\cdot F
		:=
		\lambda^sF^\tau(\lambda^{-1}Y),
		\qquad F\in\mathcal{I}_{q,s},
	\end{equation*}
	where $F^\tau(X) := \sum_{i} \tau(a_i)X^i$ for $F(X)=\sum_{i} a_i X^i$.
	
	Let $\iota$ stand for the reciprocal involution on $\mathcal{I}_{q,s}$, i.e.,
	\[
	\iota(F)=\widehat F \qquad \text{with } \widehat F(Y):=F(0)^{-1}Y^sF(Y^{-1}) \text{ for } F\in\mathcal{I}_{q,s}.
	\]
	For $n\geq 2$, define groups $\mathcal{G}$ and $\widehat{\mathcal{G}}$ by
	\[
	\mathcal{G} = 
	\begin{cases}
		\F_q^\times , & n>2,\\
		\F_q^\times \rtimes \langle \iota\rangle, & n=2,
	\end{cases}
	\]
	and 
	\[
	\widehat{\mathcal{G}} = \mathcal{G} \rtimes \Gal(\F_q/\F_p).
	\]
	We show that two MRD codes $\Sk(F)$ and $\Sk(G)$ defined from $R=\F_{q^n}[X;\sigma]$ are linear (semilinear, resp.) equivalent if and only if their associated irreducible polynomials $F$ and $G$ are in the same $\mathcal{G}$-orbit ($\widehat{\mathcal{G}}$-orbit, resp.) on $\mathcal{I}_{q,s}$.
	
	These classification theorems have immediate enumerative consequences. 
	If \(s>1\), \(1\le k\le n-1\), and \(n>2\), then the number of pairwise linearly inequivalent codes of the form \(\Sk(F)\) is at least
	\[
	\left\lceil \frac{|\mathcal{I}_{q,s}|}{\varepsilon_n(q-1)}\right\rceil,
	\]
	with 
	\[
	\varepsilon_n=\begin{cases}
		1, & n>2;\\
		2, & n=2.
	\end{cases}
	\]
	Likewise, if \(q=p^r\), then the number of pairwise \(\Gamma\)-inequivalent codes of the same form is at least
	\[
	\left\lceil \frac{|\mathcal{I}_{q,s}|}{\varepsilon_n r(q-1)}\right\rceil.
	\]
	In particular, the parameter \(F\) gives rise to a genuinely large supply of inequivalent MRD codes within the skew-polynomial family.
	
	In the semifield case \(k=1\), for \(q=p^r\), \(s>1\), and \(n>2\), we obtain at least
	\[
	\left\lceil \frac{|\mathcal{I}_{q,s}|}{\varepsilon_n r(q-1)}\right\rceil
	\]
	pairwise non-isotopic cyclic semifields of order \(q^{ns}\), with left and middle nuclei isomorphic to \(\F_{q^n}\) and right nucleus isomorphic to \(\F_{q^s}\). The strongest currently known counting results for odd-order semifields are confined to square orders, notably for the Taniguchi family \cite{gologlu2024counting} and for the biprojective commutative families of G\"olo\u{g}lu and K\"olsch \cite{gologlu2023exponential}. The cyclic semifields construction, by contrast, is available also in odd non-square orders. In this regime it yields the largest explicit family whose isotopy classes are completely determined and whose full autotopism groups can be described explicitly. Moreover, we also determine the automorphism groups of these objects excluding the very special case $(n,s,F)= (2,2,Y^2-\nu)$ with $\nu\in \F_q^\times$ (which implies $k=1$), because for these parameters the cyclic semifields $\cS_1(F)$ are isotopic to Hughes-Kleinfeld semifields whose autotopism groups have been already determined by Sandler in \cite{sandler1962autotopism}. 
	
	On the linear side, we prove that
	\[
	|\Aut_{\E_F}(\Sk(F))|
	=
	n\,\frac{(q^s-1)(q^n-1)^2}{q-1},
	\]
	and obtain an explicit description of this group. Passing to the semilinear model, we then determine the full semilinear automorphism group through a short exact sequence
	\[
	1\longrightarrow \Aut_{\E}(\mathcal C_k(F))
	\longrightarrow \Aut(\mathcal C_k(F))
	\longrightarrow \cH_F
	\longrightarrow 1,
	\]
	where \(\cH_F\) is the subgroup of \(\Aut_{\F_p}(\E)\). In particular,
	\[
	|\Aut(\mathcal C_k(F))|
	=
	|\cH_F|\,
	n\,\frac{(q^s-1)(q^n-1)^2}{q-1}.
	\]
	
	Finally, when \(k=1\), semilinear automorphisms of the spread set coincide with autotopisms of the associated cyclic semifield. We therefore obtain the full autotopism group $\Aut(\mathbb{S}_F)$ of the cyclic semifield $\mathbb{S}_F$
	which completes, in particular, the open cases in \cite{Dempwolff2011autotopism}.
	
	\subsection{Organization of this paper}
	The paper is organized as follows. Section~\ref{sec:preliminaries} collects the necessary background on semifields, MRD codes, their invariants and skew-polynomial rings. In Section \ref{sec:semilineartransf}, we establish the connections between skew-polynomial rings and irreducible semilinear operators. Section~\ref{sec:structuralresults} develops the structural properties of the family under consideration. In Section~\ref{sec:equivalence}, we present the classification results for linear and semilinear equivalence, together with their enumerative consequences. Finally, we determine the full automorphism group of the codes and, in the semifield case, the full autotopism group in Section \ref{sec:full-autotopism}.
	
	\section{Preliminaries on Semifields and Skew Polynomial Rings} \label{sec:preliminaries}
	\subsection{Semifields and rank-metric codes}
	Let \(\F\) be a finite field, and let \(\mathcal S\) be a finite-dimensional \(\F\)-vector space endowed with an \(\F\)-bilinear multiplication
	\[
	\star:\mathcal S\times \mathcal S \longrightarrow \mathcal S.
	\]
	Then \((\mathcal S,+,\star)\) is an \(\F\)-algebra, not assumed to be associative unless explicitly stated. We say that \((\mathcal S,+,\star)\) is a \textbf{presemifield} if every nonzero element induces bijective left and right multiplication maps, that is, if for each \(a\in \mathcal S\setminus \{0\}\) the maps
	\[
	L_a:b\mapsto a\star b,
	\qquad
	R_a:b\mapsto b\star a
	\]
	are permutations of \(\mathcal S\). If, in addition, \((\mathcal S,+,\star)\) has a multiplicative identity, then it is called a \textbf{semifield}. In other words, a finite semifield is a finite division algebra, not necessarily associative.
	
	Two presemifields \((\mathcal S,+,\star)\) and \((\mathcal S',+,\diamond)\) over the same prime field are said to be \textbf{isotopic} if there exist bijective additive maps
	$
	f,g,h:\mathcal S\longrightarrow \mathcal S'
	$
	such that
	\[
	f(a\star b)=g(a)\diamond h(b)
	\qquad\text{for all }a,b\in\mathcal S.
	\]
	If one may choose \(f=g=h\), then the two algebras are isomorphic. The notion of isotopy was introduced by Albert in \cite{albert1942non} as a generalization of isomorphism, and it is the natural equivalence relation in the theory of finite semifields, since isotopic semifields coordinatize isomorphic projective planes. An isotopy from \(\mathcal S\) to itself is called an \textbf{autotopism}. The set of all autotopisms of \(\mathcal S\) forms a group, called the \textbf{autotopism group} of \(\mathcal S\), and denoted by
	\[
	\Aut(\mathcal S)
	:=
	\bigl\{(f,g,h)\in \GL_{\F_p}(\mathcal S) \times \GL_{\F_p}(\mathcal S) \times \GL_{\F_p}(\mathcal S) :\,
	f(a\star b)=g(a)\star h(b)\ \text{for all }a,b\in\mathcal S
	\bigr\}.
	\]

	We note that properties such as the existence of a multiplicative identity or commutativity are not, in general, preserved under isotopy. On the other hand, every presemifield is isotopic to a semifield.
	
	We define the \textbf{spread set} of \(\mathcal S\) by
	\[
	\mathcal C(\mathcal S):=\{L_a : a\in \mathcal S\}\subseteq \End_{\F}(\mathcal S).
	\]
	Since \(\mathcal S\) is a semifield, every nonzero left multiplication map is invertible. Moreover, the assignment \(a\mapsto L_a\) is \(\F\)-linear, and hence \(\mathcal C(\mathcal S)\) is an \(\F\)-subspace of \(\End_{\F}(\mathcal S)\) of dimension \(\dim_{\F}(\mathcal S)\), all of whose nonzero elements are invertible. Conversely, let \(\mathcal C\subseteq \End_{\F}(\mathcal S)\) be an \(\F\)-subspace of dimension \(\dim_{\F}(\mathcal S)\) such that every nonzero element of \(\mathcal C\) is invertible. Fix an \(\F\)-linear isomorphism
	$
	\varphi:\mathcal S\longrightarrow \mathcal C$.
	Then \(\mathcal S\) acquires a semifield multiplication via
	\[
	x\star_{\varphi} y := \varphi(x)(y),
	\qquad x,y\in \mathcal S.
	\]
	Thus, finite semifields may be equivalently described by full-dimensional \(\F\)-subspaces of \(\End_{\F}(\mathcal S)\) consisting entirely of invertible maps, apart from the zero map. Different choices of \(\varphi\) yield isotopic semifields. In particular, spread sets provide special instances of MRD codes, as we now recall.
	
	A \textbf{rank-metric code} is a subset \(\mathcal C\subseteq M_n(\F)\), endowed with the metric
	\[
	d(A,B):=\rk(A-B).
	\]
	Its minimum distance is
	\[
	d(\mathcal C):=\min\{\rk(A-B):A,B\in\mathcal C,\ A\neq B\}.
	\]
	
	If \(\F'\) is a subfield of \(\F\), we say that \(\mathcal C\) is \textbf{\(\F'\)-linear} if \(\mathcal C\) is an \(\F'\)-vector subspace of \(M_n(\F)\). A fundamental constraint on the parameters of a rank-metric code is given by the Singleton bound of Delsarte \cite{delsarte1978bilinear}: if \(\mathcal C\subseteq M_n(\F)\) has minimum distance \(d\), then
	\begin{equation}\label{eq:singleton_rephrased}
		|\mathcal C|\leq |\F|^{\,n(n-d+1)}.
	\end{equation}
	Codes attaining equality in \eqref{eq:singleton_rephrased} are called \textbf{maximum rank-distance (MRD) codes}. Delsarte in \cite{delsarte1978bilinear} established the existence of MRD codes over arbitrary finite fields for all admissible parameters. More precisely, given a finite field \(\F\) and integers \(n,d\) such that \(d\leq n\), one can construct \(\F\)-linear MRD codes with these parameters. The same families were later independently rediscovered by Gabidulin in the equivalent vector representation setting; they are now usually called \emph{Gabidulin codes}, or \emph{Delsarte--Gabidulin codes}. In the special case \(n=d\), additive MRD codes correspond to semifields, as discussed above.
	
	Let
	\[
	\Gamma:=\GL_n(\F)\times \GL_n(\F)\times \Aut(\F),
	\]
	acting on \(M_n(\F)\) by
	\[
	(U,V,\rho)\cdot A:=UA^\rho V,
	\qquad A\in M_n(\F),
	\]
	where \(A^\rho\) is obtained by applying \(\rho\) entrywise to \(A\). This action extends naturally to subsets of \(M_n(\F)\). Two rank-metric codes are said to be \textbf{$\Gamma$-equivalent} if they lie in the same \(\Gamma\)-orbit.
	If \(\rho=\mathrm{id}\), then \(\mathcal C\) and \(\mathcal C'\) are said to be \textbf{linearly equivalent}. For a rank-metric code \(\mathcal C\subseteq M_n(\F)\), we define its \textbf{automorphism group} by
	\[
	\Aut(\mathcal C)
	:=
	\{(A,B,\rho)\in \Gamma : A\mathcal C^\rho B=\mathcal C\},
	\]
	and its \textbf{linear automorphism group} by
	\[
	\Aut_{\F}(\mathcal C)
	:=
	\{(A,B)\in \GL_n(\F)\times \GL_n(\F):A\mathcal C B=\mathcal C\}.
	\]
	%When \(\mathcal C=\mathcal C(\mathcal S)\) is the spread set of a semifield \(\mathcal S\), the group \(\Aut(\mathcal C)\) identifies with the \textbf{autotopy group} of \(\mathcal S\).
	
	\subsection{Nuclei, idealisers and equivalence of spread sets}\label{subsec:equivalencespreads}
	
	Let \((\mathcal S,+,\star)\) be a semifield. Its \textbf{left}, \textbf{middle}, and \textbf{right nucleus} are defined, respectively, by
	\begin{align*}
		\mathbb N_l(\mathcal S)
		&=\{a\in \mathcal S : a\star (b\star c)=(a\star b)\star c \text{ for all } b,c\in \mathcal S\},\\
		\mathbb N_m(\mathcal S)
		&=\{b\in \mathcal S : a\star (b\star c)=(a\star b)\star c \text{ for all } a,c\in \mathcal S\},\\
		\mathbb N_r(\mathcal S)
		&=\{c\in \mathcal S : a\star (b\star c)=(a\star b)\star c \text{ for all } a,b\in \mathcal S\}.
	\end{align*}
	Their intersection
	\[
	\mathbb N(\mathcal S)
	:=\mathbb N_l(\mathcal S)\cap \mathbb N_m(\mathcal S)\cap \mathbb N_r(\mathcal S)
	\]
	is the \emph{nucleus} of \(\mathcal S\), and the \emph{center} of \(\mathcal S\) is
	\[
	Z(\mathcal S)
	:=\{a\in \mathbb N(\mathcal S) : a\star b=b\star a \text{ for all } b\in \mathcal S\}.
	\]
	
	Each nucleus is a field. The center is the largest field over which \(\mathcal S\) is an algebra. Moreover, the nuclei and the center are fundamental isotopy invariants of a semifield.
	
	These notions admit a natural reformulation in the language of rank-metric codes, see \cite{lunardon2018nuclei,sheekey2020new}. Let \(\mathcal C\subseteq M_n(\F)\). The \textbf{left idealiser} and \textbf{right idealiser} of \(\mathcal C\) are
	\[
	\lid(\mathcal C):=\{A\in M_n(\F):A\mathcal C\subseteq \mathcal C\},
	\qquad
	\rid(\mathcal C):=\{B\in M_n(\F):\mathcal C B\subseteq \mathcal C\},
	\]
	respectively. The \textbf{centraliser} of \(\mathcal C\) is
	\[
	\Cen(\mathcal C):=\{A\in M_n(\F):AB=BA \text{ for all } B\in \mathcal C\},
	\]
	and its \textbf{center} is
	\[
	Z(\mathcal C):=\lid(\mathcal C)\cap \Cen(\mathcal C).
	\]
	
	The nuclei of a semifield admit a natural interpretation in terms of the associated spread set. Indeed, let
	$
	\mathcal C(\mathcal S)=\{L_a:a\in \mathcal S\}\subseteq \End_{\F}(\mathcal S),$ with $
	L_a(x)=a\star x,
	$
	be the spread set of \((\mathcal S,+,\star)\). Then the left and middle nuclei correspond, respectively, to the left and right idealisers of \(\mathcal C(\mathcal S)\), while the right nucleus corresponds to the centraliser of \(\mathcal C(\mathcal S)\). Likewise, the center of \(\mathcal S\) is naturally identified with the center of the spread set. Thus, the classical nuclei of a semifield may be recovered from the idealisers and the centraliser of its spread set. See \cite{marino2012nuclei,sheekey2020new}, for more details.
	
	For the purposes of this paper, it is convenient to realise the spread set over the right nucleus. Set
	\[
	\E:=\mathbb N_r(\mathcal S),
	\qquad
	m:=\dim_{\E}(\mathcal S).
	\]
	Then \(\mathcal S\) is naturally a right \(\E\)-vector space. Indeed, for every \(a\in \mathcal S\), the left multiplication map $L_a:b\mapsto a\star b$
	is \(\E\)-linear, since for all \(b\in \mathcal S\) and \(\lambda\in \E\) one has
	\[
	L_a(b\lambda)=a\star (b\lambda)=(a\star b)\lambda=L_a(b)\lambda.
	\]
	Hence, the spread set
	\[
	\mathcal C(\mathcal S)=\{L_a:a\in \mathcal S\}\subseteq \End_{\E}(\mathcal S)\cong M_m(\E).
	\]
	In particular, \(\mathcal C(\mathcal S)\) may be regarded as an additive MRD code in
	\(M_m(\E)\), with minimum distance \(m\). This right-nucleus representation is also the natural one for isotopy and autotopisms. Indeed, the following holds.
	
	\begin{proposition} [see \textnormal{\cite[Theorem 7]{lavrauw2011finite} and \cite[Proposition 3]{sheekey2020new}}] \label{prop:isotopy-equivalence-right-nucleus}
		Let \(\mathcal S\) and \(\mathcal S'\) be semifields with common right nucleus \(\E\), and let
		\[
		\mathcal C(\mathcal S),\mathcal C(\mathcal S')\subseteq \End_{\E}(\mathcal S)\cong M_m(\E)
		\]
		be their associated spread sets.
		Then \(\mathcal S\) and \(\mathcal S'\) are isotopic if and only if there exist
		\(A,B\in \GL_{\E}(\mathcal S)\) and \(\rho\in \Aut(\E)\) such that
		\[
		\mathcal C(\mathcal S')
		=
		A\,\mathcal C(\mathcal S)^{\rho}\,B.
		\]
		Here
		\[
		\mathcal C(\mathcal S)^{\rho}
		:=
		\{X^{\rho}:X\in \mathcal C(\mathcal S)\},
		\]
		where, after fixing an \(\E\)-basis, \(\rho\) acts entrywise on matrices.
	\end{proposition}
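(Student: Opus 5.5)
Both directions go through the identity \(L'_{g(a)}\circ h = f\circ L_a\), where \(L\) and \(L'\) denote left multiplications in \(\mathcal S\) and \(\mathcal S'\), together with the fact that nuclei are isotopy invariants. We also identify \(\mathcal S\) and \(\mathcal S'\) as \(\E\)-spaces, which is possible because they have the same order and the same right nucleus \(\E\).

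\emph{From isotopy to equivalence.} Let \((f,g,h)\) be an isotopism \(\mathcal S\to\mathcal S'\), so \(f(a\star b)=g(a)\diamond h(b)\) for all \(a,b\). Then
\[
L'_{g(a)}=f\circ L_a\circ h^{-1}\qquad\text{for all } a\in\mathcal S,
\]
and since \(g\) is bijective,
\[
\mathcal C(\mathcal S')=f\,\mathcal C(\mathcal S)\,h^{-1}.
\]
The maps \(f,h\) are only additive, so the work is to show they are \(\E\)-semilinear with the same field automorphism \(\rho\); then \(f=A\rho\) and \(h^{-1}=\rho^{-1}B\) with \(A,B\) \(\E\)-linear, giving \(\mathcal C(\mathcal S')=A\,\mathcal C(\mathcal S)^\rho B\).

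For semilinearity, use that the right nucleus is the centraliser of the spread set. For \(\lambda\in\E\) let \(r_\lambda\) denote right multiplication, which lies in \(\Cen(\mathcal C(\mathcal S))\). Conjugation gives
\[
h\,\Cen(\mathcal C(\mathcal S))\,h^{-1}=\Cen(\mathcal C(\mathcal S')),
\]
which identifies the right nuclei. So \(h\,r_\lambda\,h^{-1}=r_{\rho(\lambda)}\) for a field isomorphism \(\rho\), which we regard as an automorphism of \(\E\) since both nuclei are \(\E\). Hence \(h(b\lambda)=h(b)\rho(\lambda)\), i.e. \(h\) is \(\rho\)-semilinear.

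Applying \(f(a\star b\lambda)=g(a)\diamond h(b)\rho(\lambda)\), and using that \(L'_{g(a)}\) is \(\E\)-linear, gives \(f(c\lambda)=f(c)\rho(\lambda)\) for every product \(c=a\star b\). Every \(c\) is such a product, since \(L_a\) is onto for \(a\ne 0\). So \(f\) is \(\rho\)-semilinear as well. Fixing an \(\E\)-basis and writing \(\rho\) entrywise completes this direction.

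\emph{From equivalence to isotopy.} Suppose \(\mathcal C(\mathcal S')=A\,\mathcal C(\mathcal S)^\rho B\). Realise \(X\mapsto X^\rho\) as conjugation \(X^\rho=\tilde\rho\,X\,\tilde\rho^{-1}\), where \(\tilde\rho\) is the coordinatewise semilinear bijection in the fixed basis. Then
\[
\mathcal C(\mathcal S')=f\,\mathcal C(\mathcal S)\,h^{-1}
\qquad\text{with } f=A\tilde\rho,\; h=(\tilde\rho^{-1}B)^{-1}.
\]
Both \(f\) and \(h\) are additive bijections. The map \(a\mapsto f L_a h^{-1}\) is an additive bijection onto \(\mathcal C(\mathcal S')\), and \(b\mapsto L'_b\) is an additive bijection from \(\mathcal S'\) onto the same set. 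Composing the first with the inverse of the second defines an additive bijection \(g\) with \(L'_{g(a)}=f L_a h^{-1}\), that is, \(f(a\star b)=g(a)\diamond h(b)\). So \((f,g,h)\) is an isotopism.

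The main obstacle is the semilinearity step in the first direction: arguing carefully that \(h\) maps the centraliser, and hence the right nucleus, onto the right nucleus through a single field automorphism \(\rho\), and that \(f\) inherits the same \(\rho\). This rests on the identification of the right nucleus with \(\Cen(\mathcal C(\mathcal S))\) recalled above. Everything else is bookkeeping.
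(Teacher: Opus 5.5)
Your overall route is the standard argument behind the cited results; the paper itself gives no proof and only refers to Lavrauw--Polverino and Sheekey. You write $\mathcal C(\mathcal S')=f\,\mathcal C(\mathcal S)\,h^{-1}$ over the prime field, show that $f$ and $h$ are $\rho$-semilinear over the right nucleus, and factor them as $A\tilde\rho$ and $\tilde\rho^{-1}B$. The converse direction is fine, and so is deducing the semilinearity of $f$ from that of $h$.

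However, the step you single out is not justified as written. The relation $\mathcal C(\mathcal S')=f\,\mathcal C(\mathcal S)\,h^{-1}$ is not a conjugation, since $f\neq h$ in general, so it does not yield $h\,\Cen(\mathcal C(\mathcal S))\,h^{-1}=\Cen(\mathcal C(\mathcal S'))$. For $\phi\in\Cen(\mathcal C(\mathcal S))$, the map $h\phi h^{-1}$ commutes with $fL_ah^{-1}$ only if $h\phi h^{-1}f=f\phi$, and nothing in your argument guarantees this. For general additive sets of maps, centralisers are indeed not transported by $X\mapsto fXh^{-1}$. Identifying each right nucleus with the corresponding centraliser does not help, because that identification concerns each semifield separately.

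The missing ingredient is the identity element $1'$ of $\mathcal S'$. Pick $a_0$ with $g(a_0)=1'$. Then $f(a_0\star b)=h(b)$, i.e.\ $h=fL_{a_0}$, so $\mathcal C(\mathcal S')=h\,\bigl(L_{a_0}^{-1}\mathcal C(\mathcal S)\bigr)\,h^{-1}$ is a genuine conjugation. Moreover $\Cen\bigl(L_{a_0}^{-1}\mathcal C(\mathcal S)\bigr)=\Cen(\mathcal C(\mathcal S))$, because both $\id$ and $L_{a_0}$ lie in $\mathcal C(\mathcal S)$.

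Equivalently, for $c\in\mathbb N_r(\mathcal S)$ compute $fR_c(a\star b)=f(a\star(b\star c))=g(a)\diamond hR_c(b)$. Thus $\psi:=fR_cf^{-1}$ and $\phi:=hR_ch^{-1}$ satisfy $\psi(x\diamond y)=x\diamond\phi(y)$ for all $x,y\in\mathcal S'$. Taking $x=1'$ gives $\psi=\phi$, so $\phi$ commutes with every $L'_x$ and is right multiplication by $\phi(1')\in\mathbb N_r(\mathcal S')$. This gives the semilinearity of both $h$ and $f$ with the same $\rho$ at once. With this repair, the rest of your argument goes through.
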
 In particular, taking
	\(\mathcal S'=\mathcal S\), the autotopism group of \(\mathcal S\) is identified with the automorphism group of its associated spread set. Thus, after choosing an \(\E\)-basis of
	\(\mathcal S\), we may write
	\begin{equation} \label{eq:reformulationautotopism}
		\Aut(\mathcal S)
		\cong
		\Aut(\mathcal C(\mathcal S))
		:=
		\left\{
		(A,B,\rho)\in \GL_m(\E)\times \GL_m(\E)\times \Aut(\E)
		:
		A\,\mathcal C(\mathcal S)^{\rho}\,B=\mathcal C(\mathcal S)
		\right\},
	\end{equation}
	and similarly for the subgroup of autotopisms which are linear over the right nucleus, we have
	\begin{equation} \label{eq:reformulationlinearautotopism}
		\Aut_{\E}(\mathcal S)
		\cong
		\Aut_{\E}(\mathcal C(\mathcal S))
		:=
		\left\{
		(A,B)\in \GL_m(\E)\times \GL_m(\E)
		:
		A\,\mathcal C(\mathcal S)\,B=\mathcal C(\mathcal S)
		\right\}.
	\end{equation}
	Accordingly, in the sequel we adopt this point of view.
	
	%	\section{Cyclic Semifields from Skew Polynomial Rings}\label{sec:cyclic.semi.skew.poly.ring}
	%	In this section, we provide a short introduction on cyclic semifields in the framework of skew polynomial rings.  
	
	\subsection{Skew polynomial rings} \label{sec:skewpolynomial}	
	We collect here the basic facts on skew polynomial rings that will be used throughout the paper. Standard references for this material include \cite{jacobson2009finite,goodearl2004introduction,ore1933theory}.
	
	Let
	$
	\sigma 
	$
	be a generator of the cyclic Galois group $\Gal(\F_{q^n}/\F_q)$. We consider the skew polynomial ring
	$
	R:=\F_{q^n}[X;\sigma],
	$
	whose underlying additive group consists of all formal polynomials
	\[
	\alpha_rX^r+\alpha_{r-1}X^{r-1}+\cdots+\alpha_0,
	\qquad \alpha_i\in\F_{q^n},
	\]
	with the usual addition and multiplication determined by the rule
	\[
	X \alpha=\sigma(\alpha)X
	\qquad \text{for every } \alpha\in\F_{q^n}.
	\]
	Equivalently, for monomials one has
	$
	(\alpha_iX^i)(\beta_jX^j)=\alpha_i\,\sigma^i(\beta_j)\,X^{i+j},
	$
	and multiplication is then extended by distributivity. For every nonzero $f\in R$, the degree $\deg(f)$ is defined exactly as in the commutative case. It satisfies the natural identities
	\[
	\deg(fg)=\deg(f)+\deg(g)
	\qquad\text{and}\qquad
	\deg(f+g)\leq \max\{\deg(f),\deg(g)\}
	\]
	for all nonzero $f,g\in R$.
	
	The ring $R$ is a left Euclidean domain. Thus, for any nonzero $f,g\in R$, there exist unique $q,r\in R$ such that
	$
	f=qg+r,
	$
	where either $r=0$ or $\deg(r)<\deg(g)$. If $r=0$, we say that $g$ right-divides $f$, and write
	$
	g\mid_r f.
	$
	In this case, $f$ is a left multiple of $g$. Accordingly, for nonzero $f_1,f_2\in R$ one may define their greatest common right divisor, denoted by
	$
	\gcrd(f_1,f_2),
	$
	and their least common left multiple, denoted by
	$
	\lclm(f_1,f_2).
	$
	If $a=\gcrd(f_1,f_2)$ and $b=\lclm(f_1,f_2)$, then
	\[
	Rf_1+Rf_2=Ra
	\qquad\text{and}\qquad
	Rf_1\cap Rf_2=Rb,
	\]
	where $Rf_i$ denotes the left ideal generated by $f_i$. As usual, a nonzero element $f\in R$ is called \textbf{reducible} if it admits a factorization
	$
	f=gh
	$
	with $\deg(g),\deg(h)>0$; otherwise $f$ is said to be \textbf{irreducible}. The ring $R$ is noncommutative as soon as $n>1$, and its center is given by
	$
	Z(R)=\F_q[X^n]$. 
	
	Now let $F(Y)\in\F_q[Y]$ be an irreducible polynomial of degree $s\geq 1$, with $F(Y)\neq Y$. Since $F(X^n)\in Z(R)$, the ideal $RF(X^n)$ is two-sided, and we may form the quotient ring
	$
	R_F:=R/RF(X^n)$.
	Every class in $R_F$ admits a unique representative of degree strictly smaller than $ns$, and therefore
	\[
	R_F=
	\left\{
	\alpha_0+\alpha_1X+\cdots+\alpha_{ns-1}X^{ns-1}+RF(X^n)
	:\ \alpha_0,\dots,\alpha_{ns-1}\in\F_{q^n}
	\right\}.
	\]
	
	For $\overline{a}\in R_F$, we always write
	$
	\overline{a}=a+RF(X^n),
	$
	where $a\in R$ is the unique representative with $\deg(a)<ns$. We then set
	$
	\deg(\overline{a}):=\deg(a)$. By \cite[Lemma 4.2]{gomez2019computing}, the center of $R_F$ is
	\[
	\mathbb{E}_F:=Z(R_F)\cong \frac{\F_q[Y]}{\F_q[Y]F(Y)}.
	\]
	In particular, $\mathbb{E}_F$ is a field of cardinality $q^s$, hence
	$\mathbb{E}_F\cong \F_{q^s}$. Also, since $F(Y)$ is irreducible then the two-sided ideal $RF(X^n)$ is maximal in $R$. As a consequence, the quotient $R_F$ is a central simple algebra over $\mathbb{E}_F$ of dimension $n^2$. By the Wedderburn--Artin theorem, there exists an $\EF$-algebra isomorphism
	\begin{equation}\label{eq:artin}
		\mathcal{M}_F:R_F\stackrel{\sim}{\longrightarrow} M_n(\EF).
	\end{equation}
	
	We will often identify $\overline{a}\in R_F$ with the matrix $\mathcal{M}_F(\overline{a})$. Note that, if
	$
	\mathcal{M}_F':R_F\stackrel{\sim}{\longrightarrow} M_n(\EF)
	$
	is another $\EF$-algebra isomorphism, then Skolem--Noether implies that there exists
	$
	N\in \GL_n(\EF)
	$
	such that
	$
	\mathcal{M}_F'(\overline{a})=N\mathcal{M}_F(\overline{a})N^{-1}$, for all $\overline{a}\in R_F$.
	Hence the rank of $\mathcal{M}_F(\overline{a})$ depends only on $\overline{a}$, and not on the chosen identification with $M_n(\EF)$. Accordingly, we write $\rk(\overline{a})$ for the rank of the associated matrix over $\EF$. The next result provides an explicit rank formula in terms of right divisors in the skew polynomial ring.
	
	\begin{theorem}\label{th:rankpolynomial_rephrased}
		Let $\overline{a}=a+RF(X^n)\in R_F$ be nonzero. Then
		\[
		\rk(\overline{a})
		=
		\frac{1}{s}\Bigl(\deg(F(X^n))-\deg\bigl(\gcrd(a,F(X^n))\bigr)\Bigr).
		\]
	\end{theorem}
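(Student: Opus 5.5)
The plan is to compute the rank through the dimension of a left ideal. Let $M:=F(X^n)$. Since $M$ is central, $R_F$ is simple Artinian, so $R_F\cong M_n(\EF)$. In $M_n(\EF)$, a matrix $A$ of rank $r$ generates the left ideal $M_n(\EF)A$. This ideal consists of the matrices whose row space lies in the row space of $A$, so $\dim_{\EF}M_n(\EF)A = nr$. The identity $\dim_{\EF} R_F\overline a = n\,\rk(\overline a)$ is invariant under the chosen isomorphism $\mathcal M_F$, because conjugation preserves both rank and ideal dimension. So it suffices to compute $\dim_{\EF}R_F\overline a$, or equivalently $\dim_{\F_q}R_F\overline a = s\cdot \dim_{\EF}R_F\overline a$.

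Next I compute this ideal inside $R$. The preimage of $R_F\overline a$ in $R$ is the left ideal $Ra+RM$. Since $R$ is a left Euclidean domain, $Ra+RM = Rg$ with $g=\gcrd(a,M)$. Hence $R_F\overline a\cong Rg/RM$. Because $g$ right-divides $M$, say $M=hg$, each class in $Rg/RM$ has a unique representative $cg$ with $\deg c<\deg h=\deg M-\deg g$. This follows from left division by $h$: $c=uh+c'$ gives $cg=uM+c'g$, and uniqueness comes from the degree count. Therefore
\[
\dim_{\F_{q^n}} Rg/RM=\deg M-\deg g, \qquad \dim_{\F_q} R_F\overline a = n(\deg M-\deg g).
\]

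Finally I compare the two counts:
\[
n(\deg M-\deg g)=\dim_{\F_q}R_F\overline a = s\,n\,\rk(\overline a),
\]
which gives $\rk(\overline a)=\frac1s\bigl(\deg M-\deg g\bigr)$, with $\deg M = ns$. As a sanity check, this formula also shows that $\deg g\equiv 0 \pmod s$, since $\deg M - \deg g$ must be a multiple of $s$.

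The main point needing care is the first step: I must check that $\dim_{\EF}$ of a left ideal $M_n(\EF)A$ equals $n\,\rk A$. This is standard once one writes $A=PDQ$ with $P,Q$ invertible and $D$ in the block form $\begin{pmatrix} I_r & 0\\ 0&0\end{pmatrix}$. I must also check that dimensions over $\F_q$ are computed consistently. Both $R_F\overline a$ and $Rg/RM$ are left $\F_{q^n}$-spaces and $\EF$-spaces compatibly, and their $\F_q$-dimensions are well defined, so this is consistent.
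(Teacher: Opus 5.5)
The paper does not prove this statement. It recalls it as a known fact from the skew-polynomial literature and uses it as a black box, for instance to show that $X+RF(X^n)$ is invertible. So there is no in-paper argument to compare with; judged on its own, your proof is correct and complete.

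Both dimension counts are sound.
\begin{itemize}
\item The isomorphism $\mathcal M_F$ carries the left ideal $R_F\overline a$ onto $M_n(\EF)\,\mathcal M_F(\overline a)$, which has $\EF$-dimension $n\,\rk(\overline a)$.
\item The preimage of $R_F\overline a$ in $R$ is $Ra+RF(X^n)=Rg$, where $g=\gcrd(a,F(X^n))$. Writing $F(X^n)=hg$, the map $c\mapsto cg$ induces an isomorphism of left $R$-modules $R/Rh\cong Rg/RF(X^n)$, by cancellation in the domain $R$. Hence the quotient has left $\F_{q^n}$-dimension $\deg h=ns-\deg g$.
\end{itemize}
Your remark that the $\F_q$-structures coming from $\F_{q^n}$ and from $\EF$ coincide is exactly what justifies equating the two counts. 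Both are left multiplication by the constants in $\F_q$.

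One small wording point: $R_F$ is simple because $F$ is irreducible, so that $RF(X^n)$ is a maximal two-sided ideal. Centrality of $F(X^n)$ alone only makes the ideal two-sided. Since you only use the identification $R_F\cong M_n(\EF)$, which the paper already provides, this does not affect the argument.

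A pleasant feature of your route is that it never needs an irreducible right divisor $f$ of $F(X^n)$ or the module $R/Rf$. It uses only the Wedderburn identification and the left Euclidean structure of $R$, and it yields $s\mid\deg\gcrd(a,F(X^n))$ as a by-product.
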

	
	An element $g \in R$ is called \textbf{two-sided} if the left and right ideals it generates coincide, that is, $Rg = gR$.  
	By, for instance, \cite[Theorem~1.1.22]{jacobson2009finite}, an element $g \in R$ is two-sided if and only if it can be written in the form
	\[
	g = d G(X^n) X^m,
	\]
	for some $d \in \fqn$, a polynomial $G(Y) \in \F_q[Y]$, and an integer $m \ge 0$.
	
	\begin{definition}
		Let $f \in R$ be a nonzero element.  
		A \textbf{bound} of $f$ is a two-sided polynomial $f^* \in R$ such that the ideal $Rf^* = f^*R$ is the largest two-sided ideal contained in the left ideal $Rf$.  
		Equivalently,
		\[
		Rf^* = \Ann_R(R/Rf)
		:= \{\, g \in R \mid g(a + Rf) = 0 + Rf \text{ for all } a + Rf \in R/Rf \,\},
		\]
		where $\Ann_R(R/Rf)$ denotes the (left) annihilator of the $R$-module $R/Rf$.
	\end{definition}
	
	Thus, a bound of $f$ can be viewed as a minimal two-sided left multiple of $f$. If $f^* \neq 0$, the polynomial $f$ is said to be \textbf{bounded}.  
	Because $\sigma$ is an automorphism of $\fqn$ and $R$ has finite dimension $n^2$ over its center $Z(R) = \F_q[X^n]$, every nonzero element of $R$ is bounded, see e.g. \cite{gomez2019computing}.  Let $f \in R$ be a nonconstant polynomial with nonzero constant coefficient.  
	Then any bound $f^*$ of $f$ is of the form
	\[
	f^* = d F(X^n),
	\]
	for some $d \in \fqn$ and a monic polynomial $F(Y) \in \fq[Y]$ with nonzero constant term; see \cite[Lemma 2.11]{gomez2019computing}.  
	In this situation, we refer to the bound of $f$ as the unique monic central polynomial
	\[
	f^* = F(X^n).
	\]
	
	\subsection{Cyclic semifields and related MRD codes}
	
	Consider the skew polynomial ring
	$
	R=\F_{q^n}[X;\sigma],
	$
	where \(\sigma\in\Aut(\F_{q^n})\) has fixed field \(\F_q\). Let \(f\in R\) be a monic irreducible polynomial of degree \(s\). Then the quotient \(R/Rf\) can be identified with the set of skew polynomials of degree at most \(s-1\), and it becomes a nonassociative algebra under the multiplication
	\[
	a\circ b:=ab \bmod_r f.
	\]
	Since \(R\) is a Euclidean domain and \(f\) is irreducible, this multiplication has no zero divisors. Therefore,
	\[
	S_f:=(R/Rf,+,\circ)
	\]
	is a semifield, called the \textbf{cyclic semifield} (or \textbf{Petit algebra}) associated with \(f\). Moreover, the multiplication is associative if and only if \(Rf=fR\), equivalently, if and only if \(f\) generates a two-sided ideal; in particular, this happens whenever \(f\in Z(R)\). The nuclei of \(S_f\) are also explicitly known: if \(f\notin Z(R)\) and \(\deg(f)\geq 2\), then
	\[
	\mathbb N_l(S_f)\cong \F_{q^n},
	\qquad
	\mathbb N_m(S_f)\cong \F_{q^n},
	\qquad
	\mathbb N_r(S_f)\cong \F_{q^s},
	\qquad
	Z(S_f)\cong \F_q.
	\]
	
	In \cite{sheekey2020new}, this construction was extended to a much broader skew-polynomial framework for rank-metric codes. More precisely, once an irreducible polynomial \(F\in \F_q[Y]\) of degree \(s\) is fixed, as discussed in Section~\ref{sec:skewpolynomial}, the quotient
	$
	R_F:=R/RF(X^n)$
	inherits the structure of a central simple \(\E_F\)-algebra and may be identified with \(M_n(\E_F)\). Inside this algebra, one obtains a large family of MRD codes containing cyclic semifields as a distinguished special case.
	
	More precisely, let \(s\geq 1\). For each integer \(1\leq k\leq n-1\), set
	\begin{equation}\label{eq:skewGabidulin}
		\mathcal S_{n,s,k}(F):=
		\left\{
		\sum_{i=0}^{sk-1}\alpha_i X^i + RF(X^n)
		:\ \alpha_i\in \F_{q^n}
		\right\}
		\subseteq R_F\cong M_n(\E_F).
	\end{equation}
	Then \(\mathcal S_{n,s,k}(F)\) is an \(\F_q\)-linear MRD code in \(R_F\cong M_n(\E_F)\), of \(\F_q\)-dimension \(nsk\) and minimum distance \(n-k+1\).
	
	For brevity, once \(n\) and \(s\) are fixed, we write \(\mathcal S_k(F)\) in place of \(\mathcal S_{n,s,k}(F)\).

	We recall the basic invariants of the code \(\mathcal S_k(F)\), namely its left and right idealisers, its centraliser, and its center.
	
	\begin{proposition}[see \textnormal{\cite[Theorem~9]{sheekey2020new} and \cite[Theorem~5.18]{gomez2025adjoint}}] \label{prop:invariantsSNsk}
		Let \(s>1\), \(1\le k\le n-1\), and set
		$
		\mathcal C:=\mathcal S_k(F)$.
		Then
		\[
		\lid(\mathcal C)=\rid(\mathcal C)
		=
		\{\alpha+RF(X^n):\alpha\in \F_{q^n}\},
		\]
		\[
		\Cen(\mathcal C)=\E_F,
		\]
		and
		\[
		Z(\mathcal C)
		=
		\{\alpha+RF(X^n):\alpha\in \F_q\}.
		\]
	\end{proposition}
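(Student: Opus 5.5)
The plan is to compute each invariant directly inside $R_F$, using degree considerations for skew polynomials of degree $<ns$. Write $\mathcal C=\mathcal S_k(F)$ and $\overline{X}=X+RF(X^n)$.

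\emph{Easy inclusions.} For $\alpha\in\F_{q^n}$, left multiplication by $\alpha$ clearly preserves $\mathcal C$. Right multiplication also does, because $(\sum\alpha_iX^i)\alpha=\sum\alpha_i\sigma^i(\alpha)X^i$ keeps the degree below $sk$. Hence $\F_{q^n}\subseteq\lid(\mathcal C)\cap\rid(\mathcal C)$. The center $\E_F$ of $R_F$ trivially centralises $\mathcal C$.

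\emph{Reverse inclusion for the left idealiser.} Take $\overline{a}\in\lid(\mathcal C)$ with representative $a$ of degree $<ns$. Since $1\in\mathcal C$, we get $\overline{a}\in\mathcal C$, so $\deg a\le sk-1$. Write $a=\sum_{j<sk}a_jX^j$. Since $sk-1\ge 1$ when $s>1$, also $\overline{X}\in\mathcal C$, and more generally $\overline{X}^{\,i}\in\mathcal C$ for $i\le sk-1$. In particular $\overline{a}\,\overline{X}^{\,sk-1}\in\mathcal C$. Let $d$ be the largest index with $a_d\ne0$.
\begin{itemize}
\item If $d+sk-1<ns$, no reduction occurs, so we need $d+sk-1\le sk-1$, i.e.\ $d=0$.
\item If $d+sk-1\ge ns$, reduction modulo $F(X^n)=X^{ns}+\dots+F(0)$ replaces the high part by lower-degree terms. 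Here I would use products $\overline{a}\,\beta\overline{X}^{\,i}$ with $\beta\in\F_{q^n}$ and suitable $i$ so that $d+i$ lands in the window $[sk,ns-1]$, where no reduction happens. This window is nonempty because $k\le n-1$.
\end{itemize}
Choosing $i=ns-1-d$ gives $i\le sk-1$ exactly when $d\ge ns-sk$, and then the leading coefficient $a_d$ sits at degree $ns-1\ge sk$, a contradiction. If instead $d<ns-sk$, choose $i=sk-d$, which lies in $[1,sk-1]$ when $d\ge1$, giving degree exactly $sk$ with no reduction. This is again a contradiction. So $d=0$ and $\lid(\mathcal C)=\F_{q^n}$. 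The right idealiser is handled the same way using $\overline{X}^{\,i}\overline{a}$, whose leading coefficient is $\sigma^i(a_d)\ne0$.

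\emph{Centraliser.} If $\overline{a}$ commutes with every element of $\mathcal C$, then it commutes with $\F_{q^n}$ and with $\overline{X}$, since both lie in $\mathcal C$ when $s>1$. These generate $R_F$ as a ring, so $\overline{a}\in Z(R_F)=\E_F$. Combining this with the first inclusion gives $\Cen(\mathcal C)=\E_F$.

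\emph{Center.} By definition $Z(\mathcal C)=\lid(\mathcal C)\cap\Cen(\mathcal C)=\F_{q^n}\cap\E_F$. An element $\alpha\in\F_{q^n}$ commutes with $\overline{X}$ iff $\sigma(\alpha)=\alpha$, i.e.\ iff $\alpha\in\F_q$. Hence $Z(\mathcal C)=\F_q$.

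The main obstacle is the idealiser step: one has to check carefully that some suitable power $\overline{X}^{\,i}$ with $i\le sk-1$ pushes the top term of $a$ into the reduction-free window $[sk,ns-1]$. This relies on both $s>1$ and $k\le n-1$.
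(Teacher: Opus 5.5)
Your argument is correct. Note that the paper itself gives no proof of this proposition: it is simply quoted from \cite[Theorem~9]{sheekey2020new} and \cite[Theorem~5.18]{gomez2025adjoint}.

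Your route is a direct, self-contained computation inside $R_F$. It uses only three ingredients:
\begin{enumerate}
\item the $\F_{q^n}$-basis $1,X,\dots,X^{ns-1}$ of $R_F$;
\item the fact that products of total degree $<ns$ need no reduction modulo $F(X^n)$, which is the same bookkeeping the paper later uses in the proof of Theorem~\ref{thm:b-linear};
\item the identity $Z(R_F)=\E_F$.
\end{enumerate}
Quoting the literature buys brevity and consistency with the treatment of Sheekey's more general family. Your argument buys transparency about where the hypotheses enter. The condition $s>1$ guarantees $\overline{X}\in\mathcal C$, which drives both the centraliser step and the computation $\F_{q^n}\cap\E_F=\F_q$. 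The condition $k\le n-1$ guarantees $sk<ns$.

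One simplification: the case distinction in your idealiser step is unnecessary. Take any $1\le d\le sk-1$ and set $i=sk-d$. Then $i\in[1,sk-1]$, so $\overline{X}^{\,i}\in\mathcal C$, and $d+i=sk\le s(n-1)<ns$. Hence $\overline{a}\,\overline{X}^{\,i}$, respectively $\overline{X}^{\,i}\,\overline{a}$, has unreduced degree exactly $sk$ with leading coefficient $a_d$, respectively $\sigma^i(a_d)$, and so lies outside $\mathcal C$. The ``main obstacle'' you flag therefore disappears, and the branch with $i=ns-1-d$ can be dropped.
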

	We now explain how cyclic semifields arise within this construction. Let \(f\in R\) be a monic irreducible right divisor of \(F(X^n)\). Then \(\deg(f)=s\), and \(F(X^n)\) is the bound of \(f\). Set
	\[
	R_f:=R/Rf.
	\]
	Since \(F(X^n)\in Z(R)\), the quotient
	\[
	E_f:=\{z+Rf : z\in Z(R)\}
	\]
	is a field naturally isomorphic to
	$
	\EF=Z(R)/RF(X^n)\cong \F_{q^s}.
	$
	Hence \(R_f\) becomes naturally a right \(E_f\)-vector space, and, since \(|R_f|=q^{ns}\) and \(|E_f|=q^s\), it has dimension \(n\) over \(E_f\).
	
	For each \(\bar a=a+RF(X^n)\in R_F\), define
	\begin{equation}\label{eq:leftmultiplicationmaps}
		L_{\bar a}:R_f\longrightarrow R_f,
		\qquad
		b+Rf\longmapsto ab+Rf.
	\end{equation}
	This map is well defined and \(E_f\)-linear. In this way one obtains an \(E_f\)-algebra isomorphism
	\[
	\Lambda_f:R_F\longrightarrow \End_{E_f}(R_f),
	\qquad
	\bar a\longmapsto L_{\bar a}.
	\]
	After choosing an \(E_f\)-basis of \(R_f\), this yields
	\[
	R_F\cong \End_{E_f}(R_f)\cong M_n(E_f)\cong M_n(\F_{q^s}).
	\]
	
	Under this identification, the cyclic semifield \(S_f\) is recovered as the case \(k=1\): indeed, the spread set \(\C(S_f)\) corresponds precisely to the image of the elements of degree at most \(s-1\) in \(R_F\), namely to the code \(\mathcal S_{n,s,1}(F)\). Thus cyclic semifields appear as the first layer of the more general family of MRD codes arising from skew polynomial rings.
	
	\begin{remark}\label{rem:sheekey-general-family}
		The family \(\mathcal S_k(F)\) considered in this paper is exactly the \(\eta=0\)
		specialization of the more general family \(\Snsk(\eta,\rho,F)\) introduced in
		\cite{sheekey2020new}. It is worth recalling that the family \(\mathcal S_{n,s,k}(\eta,\rho,F)\) contains several
		classical constructions as special cases. In particular, for \(k=1\) one obtains semifields,
		including cyclic semifields and generalised twisted fields \cite{albert1961generalized}. Whereas for \(s=1\) one recovers Gabidulin codes
		\cite{delsarte1978bilinear,gabidulin1985theory,Gabidulins} and twisted Gabidulin codes \cite{sheekey2016new,lunardon2018generalized}.
		More generally, quotients of skew polynomial rings have also been studied beyond the finite-field
		matrix setting, leading to new division algebras and MRD codes over matrix spaces \(M_n(\mathbb D)\), where $\mathbb{D}$ is a non commutative division ring;
		see, for instance, \cite{sheekey2020new,thompson2023division,lobillo2025quotients}.
	\end{remark}

	By definition, for $n=1$ the skew polynomial ring $R$ is the usual commutative polynomial ring $\F_{q}[X]$. For $s=1$, $S_f$ becomes a finite field and $\cS_{n,1,k}(F)$ becomes the classical Gabidulin code. Therefore, \textbf{throughout the rest part of this paper, we always assume that \boldmath{$n\geq 2$ and $s\geq 2$}}.

	\section{Irreducible Semilinear Transformations and the Skew-Polynomial framework} \label{sec:semilineartransf} \label{subsection:constructionirreducible}
	
	In this section, we connect the skew‑polynomial setting with the framework of irreducible semilinear transformations. This link is crucial in the sequel for the equivalence problem and for the determination of the full automorphism group of cyclic semifields, as it allows us to switch between these two frameworks to leverage their respective advantages.
	
	Let \(\sigma\) be a generator of \(\Gal(\F_{q^n}/\F_q)\), and let \(V\) be an \(s\)-dimensional \(\F_{q^n}\)-vector space. A map \(T:V\to V\) is called \(\sigma\)-\textbf{semilinear} if \(T\) is additive and
	\[
	T(\alpha v)=\sigma(\alpha)T(v)
	\]
	for every \(\alpha\in \F_{q^n}\) and \(v\in V\).  A \(\sigma\)-semilinear map \(T\) is said to be \textbf{irreducible} if the only \(T\)-invariant \(\F_{q^n}\)-subspaces of \(V\) are \(\{0\}\) and \(V\). In \cite{Dempwolff2010onirreducible}, Dempwolff characterized irreducible semilinear transformations and provided an explicit description of them. We now recall this construction and we fix the following notation.
	
	Let \(n,s\) be positive integers, and set
	$
	\ell:=\gcd(n,s)$.
	We also define
	\begin{itemize}
		\item \(n':=\frac{n}{\ell}\),
		\item \(s':=\frac{s}{\ell}\),
		\item \(\KK:=\F_{q^n}\),
		\item \(\LL:=\F_{q^{s'n}}=\F_{q^{ns/\ell}}\),
		\item \(\F:=\F_{q^s}\).
	\end{itemize}
	
	Throughout this section, there will be several similar notations which stand for some algebraic structures all isomorphic to $\F_{q^s}$. To help the reader to follow their definitions, we list them here:
	\begin{itemize}
		\item $	E_f:=\{z+Rf : z\in Z(R)\}$;
		\item $\EF=Z(R)/RF(X^n)$;
		\item $\E_T:=\F_q[T^n]\subseteq \End_{\F_q}(V)$.
	\end{itemize}
	Here $Z(R)$ is the center of $R=\F_{q^n}[X;\sigma]$.
	
	Consider the vector space
	\[
	V:=\LL^\ell.
	\]
	Then \(V\) is an \(\ell\)-dimensional vector space over \(\LL\). Let
	$
	\sigma:\alpha\longmapsto \alpha^{q^h}$
	be a generator of \(\Gal(\F_{q^n}/\F_q)\), where \(\gcd(h,n)=1\). Since \(\ell=\gcd(n,s)\), there exist integers \(c,c'\) such that
	\[
	\ell=cs+c'n.
	\]
	We may choose \(c>0\) with \(\gcd(c,n')=1\). Define
	\[
	\gamma:\LL\longrightarrow \LL,
	\qquad
	\alpha\longmapsto \alpha^{\sigma^{cs}}=\alpha^{q^{hcs}}.
	\]
	For every \(\alpha\in \KK\), we have
	\[
	\gamma(\alpha)=\alpha^{\sigma^{cs}}=\alpha^{\sigma^\ell},
	\]
	since \(cs\equiv \ell \pmod n\). Moreover, \(\gamma\) fixes \(\F=\F_{q^s}\). Since \([\LL:\F]=n'\) and \(\gcd(ch,n')=1\), the automorphism \(\gamma\) is a generator of \(\Gal(\LL/\F)\).
	
	We now endow \(V\) with a \(\KK\)-vector space structure, defined as 
	\begin{equation}\label{eq:vectorspaceconstruction}
		\alpha \cdot (x_0,\ldots,x_{\ell-1})
		:=
		\bigl(\alpha x_0,\alpha^\sigma x_1,\ldots,\alpha^{\sigma^{\ell-1}}x_{\ell-1}\bigr),
	\end{equation}
	for every \(\alpha\in \KK\) and \((x_0,\ldots,x_{\ell-1})\in V\). In particular,
	\[
	\dim_{\KK}(V)=\ell[\LL:\KK]=\ell s'=s.
	\]

	For \(w\in \LL^\times\), define
	\[
	T_{w,\sigma}:V\longrightarrow V,
	\qquad
	(x_0,\ldots,x_{\ell-1})\longmapsto (x_1,\ldots,x_{\ell-1},w x_0^\gamma).
	\]
	For every \(\alpha\in \KK\), we have
	\[
	\begin{aligned}
		T_{w,\sigma}\bigl(\alpha\cdot(x_0,\ldots,x_{\ell-1})\bigr)
		&=
		(\alpha^\sigma x_1,\ldots,\alpha^{\sigma^{\ell-1}}x_{\ell-1},w\,\alpha^\gamma x_0^\gamma)\\
		&=
		(\alpha^\sigma x_1,\ldots,\alpha^{\sigma^{\ell-1}}x_{\ell-1},w\,\alpha^{\sigma^\ell}x_0^\gamma)\\
		&=
		\sigma(\alpha)\cdot T_{w,\sigma}(x_0,\ldots,x_{\ell-1}),
	\end{aligned}
	\]
	since \(\gamma\) restricts to \(\sigma^\ell\) on \(\KK\). Therefore \(T_{w,\sigma}\) is a \(\sigma\)-semilinear operator on \(V\), viewed as a $\K$-vector space.
	
	A direct computation gives
	\[
	T_{w,\sigma}^{\ell}(x_0,\ldots,x_{\ell-1})
	=
	(w x_0^\gamma,w x_1^\gamma,\ldots,w x_{\ell-1}^\gamma),
	\]
	and iterating this identity \(n'\) times, and using that \(\gamma\) generates
	\(\Gal(\LL/\F)\), hence has order \(n'=[\LL:\F]\), we obtain
	\begin{equation}\label{eq:powernirreducible}
		T_{w,\sigma}^{n}(x_0,\ldots,x_{\ell-1})
		=
		(u x_0,u x_1,\ldots,u x_{\ell-1}),
	\end{equation}
	where
	\[
	u=w\,w^\gamma\cdots w^{\gamma^{n'-1}}=\N_{\LL/\F}(w).
	\]
	
	\begin{proposition}[cf.\ \textnormal{\cite[Section~2.8]{Dempwolff2010onirreducible}}]
		Let \(w\in \LL^\times\) be such that $\F_q\bigl(\N_{\LL/\F}(w)\bigr)=\F$. Then the operator \(T_{w,\sigma}\) is irreducible.
	\end{proposition}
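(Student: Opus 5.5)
The plan is to turn the statement into one about modules over the skew polynomial ring $R=\F_{q^n}[X;\sigma]$. Then irreducibility follows from a cardinality count, because a simple Artinian ring has a unique simple module of known size. Write $T:=T_{w,\sigma}$ and $u:=\N_{\LL/\F}(w)\in\F^\times$.

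First I make $V$ a left $R$-module. Elements $\alpha\in\KK$ act through the $\KK$-structure \eqref{eq:vectorspaceconstruction}, and $X$ acts as $T$. The semilinearity identity $T(\alpha\cdot v)=\sigma(\alpha)\cdot T(v)$, checked just before the statement, is exactly the relation $X\alpha=\sigma(\alpha)X$. So this extends to a well-defined action of $R$ on $V$. Under this action, the $T$-invariant $\KK$-subspaces of $V$ are precisely the $R$-submodules of $V$. Hence irreducibility of $T$ is equivalent to $V$ being a simple $R$-module.

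Next I find a central polynomial that annihilates $V$. Let $G(Y)\in\F_q[Y]$ be the minimal polynomial of $u$ over $\F_q$. By hypothesis $\F_q(u)=\F=\F_{q^s}$, so $G$ is monic irreducible of degree $s$. Moreover $G(Y)\neq Y$, since $u\neq 0$. By \eqref{eq:powernirreducible}, $T^n$ is componentwise multiplication by $u$, so $G(T^n)$ is componentwise multiplication by $G(u)=0$. Thus $G(X^n)$ acts as zero. Since $G(X^n)\in Z(R)$ generates a two-sided ideal, $V$ becomes a module over the quotient $R_G=R/RG(X^n)$.

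By the facts recalled in Section~\ref{sec:skewpolynomial}, $R_G$ is central simple over $\E_G\cong\F_{q^s}$, and by \eqref{eq:artin} $R_G\cong M_n(\F_{q^s})$. Every nonzero module over $M_n(\F_{q^s})$ is a direct sum of copies of its unique simple module $\F_{q^s}^n$, which has cardinality $q^{ns}$. On the other hand,
\[
|V|=|\LL|^\ell=q^{(ns/\ell)\ell}=q^{ns}.
\]
So the nonzero module $V$ is a single copy of the simple module and is therefore simple. Consequently the only $T$-invariant $\KK$-subspaces are $\{0\}$ and $V$.

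The step needing the most care is the second one. There I must check that \eqref{eq:powernirreducible} really gives $T^n$ as a multiplication by $u$ that is compatible with the module structure, so that $G(T^n)=G(u)\cdot{}$ is the zero map. This is where the hypothesis $\F_q(u)=\F$ enters: it guarantees that $\deg G=s$, which is exactly what makes the simple module of $R_G$ have the same size as $V$.
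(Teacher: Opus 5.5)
Your proof is correct. The paper gives no argument for this proposition and simply cites Dempwolff, so your proof is a self-contained replacement rather than a variant of something in the text.

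Your route is module-theoretic. You make $(V,T_{w,\sigma})$ a left $R$-module and use \eqref{eq:powernirreducible} to see that the central element $G(X^n)$ annihilates it. The step you flag is harmless: $\sigma$ fixes $\F_q$, so the coefficients of $G$ act on $V$ by plain componentwise multiplication. You then conclude by comparing $|V|=q^{ns}$ with the size of the unique simple module of $R/RG(X^n)\cong M_n(\F_{q^s})$.

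A more elementary alternative works directly in coordinates. Let $W$ be a $T_{w,\sigma}$-invariant $\K$-subspace. It is stable under $T_{w,\sigma}^n$, i.e.\ under componentwise multiplication by $u$, and hence by all of $\F=\F_q(u)$. It is also stable under componentwise multiplication by $(\alpha,\alpha^{\sigma},\dots,\alpha^{\sigma^{\ell-1}})$ for $\alpha\in\K$. Since $\sigma$ has order $\ell$ on $\K\cap\F=\F_{q^\ell}$, these operators generate the full componentwise action of $\LL^\ell$; this is the Chinese-remainder identification $\K\otimes_{\F_q}\F\cong\LL^\ell$. So $W=\prod_{i\in I}\LL$ for some $I\subseteq\{0,\dots,\ell-1\}$, and the cyclic shift built into $T_{w,\sigma}$ forces $I=\varnothing$ or $I=\{0,\dots,\ell-1\}$.

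That argument needs only linear algebra and the explicit shape of $T_{w,\sigma}$, at the cost of the tensor-product computation. Yours is shorter, given the background on $R/RG(X^n)$ that the paper already recalls, and it gives more for free:
\begin{enumerate}
\item The same count shows the hypothesis is necessary. If $\F_q(u)=\F_{q^d}$ with $d<s$, then $V$ is a direct sum of $s/d$ simple modules and so is reducible.
\item Suppose $\N_{\LL/\F}(w)$ and $\N_{\LL/\F}(w')$ are roots of the same $G$. Then $T_{w,\sigma}$ and $T_{w',\sigma}$ both make $V$ into the unique simple $R/RG(X^n)$-module, so they are conjugate under $\GL_{\K}(V)$. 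This recovers the conjugacy statement in Theorem~\ref{th:classificationsirreducible}(b).
\item It identifies $V$ with $R/Rf$ for any irreducible right divisor $f$ of $G(X^n)$.
\end{enumerate}
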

	
	The following theorem shows that every irreducible \(\sigma\)-semilinear operator
	on an \(s\)-dimensional \(\K\)-vector space is conjugate to one of the maps \(T_{w,\sigma}\).

	\begin{theorem}[cf.\ \textnormal{\cite[Theorem~3.4]{Dempwolff2010onirreducible}}]\label{th:classificationsirreducible}
		Let \(V\) be an \(s\)-dimensional \(\F_{q^n}\)-vector space. Let \(\sigma\) be a generator of \(\Gal(\F_{q^n}/\F_q)\). Then the following holds:
		\begin{enumerate}[label=(\alph*)]
			\item For any irreducible $\sigma$-semilinear map $T$ on $V$, there exist \(w\in \LL^\times\), with \(\fq\left(\N_{\LL/\F}(w)\right)=\F_{q^s}\), and an invertible \(\F_{q^n}\)-linear transformation $\phi$	such that
			$
			\phi^{-1}\circ T\circ \phi=T_{w,\sigma}.
			$
			\item The following are equivalent:
			\begin{enumerate}[label=(\roman*)]
				\item The irreducible $\sigma$-semilinear maps $T_{w,\sigma}$ and $T_{w',\sigma}$ are conjugate under $\GL_{\F_{q^n}}(V)$.
				\item The linear maps $T^n_{w,\sigma}$ and $T^n_{w',\sigma}$ are conjugate under $\GL_{\F_{q^n}}(V)$.
				\item $\N_{\LL/\F}(w)$ and $\N_{\LL/\F}(w')$ are conjugate under $\Gal(\F_{q^n}/\F_q)$. 
			\end{enumerate}
		\end{enumerate}
	\end{theorem}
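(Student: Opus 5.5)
We treat the three parts separately. For (a) we turn $V$ into an $\LL$-space split into $\ell$ cyclically permuted lines. For (b) we use that $T^n_{w,\sigma}$ is scalar multiplication by $\N_{\LL/\F}(w)$, then Hilbert~90 and a Frobenius twist.

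\emph{Part (a).} Let $T$ be irreducible and $\sigma$-semilinear on $V$.

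1. Since $\sigma^n=\id$, the map $T^n$ is $\F_{q^n}$-linear. Hence the scalar maps by $\alpha\in\KK$ commute with $T^n$.

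2. Make $V$ a left $R$-module by letting $X$ act as $T$. Irreducibility says $V$ is a simple module, so $V\cong R/Rf$ with $f$ irreducible of degree $s$. By the discussion of bounds, the bound $F(X^n)$ of $f$ annihilates $V$, with $F\in\F_q[Y]$ irreducible.

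3. Therefore $\E_T=\F_q[T^n]\cong \F_q[Y]/(F)$ is a field. It contains a field of order $q^s$ and its order divides $q^{ns}=|V|$, so $\E_T\cong\F_{q^s}$.

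4. By step 1, $\KK$ and $\E_T$ generate a commutative subalgebra of $\End_{\F_q}(V)$. It is a quotient of $\KK\otimes_{\F_q}\E_T\cong \LL^{\ell}$, a product of $\ell$ copies of $\LL$ indexed by the embeddings of $\F_{q^\ell}$.

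5. The corresponding idempotents give $V=V_0\oplus\cdots\oplus V_{\ell-1}$, with each $V_i$ an $\LL$-space. Conjugation by $T$ acts as $\sigma\otimes 1$ on this algebra, because $T\alpha=\sigma(\alpha)T$ and $T$ commutes with $T^n$. Since $\sigma$ restricts to a generator of $\Gal(\F_{q^\ell}/\F_q)$, $\sigma\otimes1$ permutes the $\ell$ idempotents cyclically, so $T(V_i)=V_{i+1}$ with indices mod $\ell$.

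6. If some $V_i$ were $0$, all would be, since $T$ is bijective. So the $V_i$ are nonzero, all of the same $\LL$-dimension. Counting, $q^{ns}=|V|=|\LL|^{\ell d}=q^{ns d}$, so each $V_i$ is a line over $\LL$.

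7. The map $T^\ell|_{V_0}$ commutes with $\E_T$ and twists $\KK$ by $\sigma^\ell$. Hence it is semilinear with respect to the unique automorphism of $\LL$ that restricts to $\sigma^\ell$ on $\KK$ and is trivial on $\F$; this automorphism is exactly $\gamma$.

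8. Pick $e_0\in V_0\setminus\{0\}$ and set $e_i:=T^ie_0$ for $0\le i\le \ell-1$. With suitably twisted $\LL$-coordinates these $e_i$ yield the $\KK$-structure \eqref{eq:vectorspaceconstruction}, and $T$ takes the form $T_{w,\sigma}$ for some $w\in\LL^\times$. The twisting of coordinates needed to match the $\KK$-action on each $V_i$ is the point to check most carefully.

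9. Finally, by \eqref{eq:powernirreducible} the minimal polynomial of $T^n$ is that of $u=\N_{\LL/\F}(w)$ over $\F_q$. This polynomial is $F$, of degree $s$, so $\F_q(u)=\F$.

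\emph{Part (b).}

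(i)$\Rightarrow$(ii): if $\phi T_{w,\sigma}\phi^{-1}=T_{w',\sigma}$, then taking $n$-th powers gives $\phi T^n_{w,\sigma}\phi^{-1}=T^n_{w',\sigma}$.

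(ii)$\Leftrightarrow$(iii):
1. By \eqref{eq:powernirreducible}, $T^n_{w,\sigma}$ is multiplication by $u\in\F$ on the $\LL$-space $V$.
2. Viewed as a $\KK$-linear map it is semisimple. Its minimal polynomial over $\KK$ is the minimal polynomial of $u$ over $\KK$, and its characteristic polynomial is a power of it.
3. Two such maps are $\GL_{\KK}(V)$-conjugate iff these minimal polynomials agree. That holds iff $u'$ is a conjugate of $u$ over $\KK$, i.e. $u'=u^{q^{nj}}$ for some $j$. This is how we read the Galois conjugacy in (iii).

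(iii)$\Rightarrow$(i):
1. \emph{Scaling.} For $c\in\LL^\times$, scaling all coordinates by $c$ is $\KK$-linear, since $\LL$ is commutative. It conjugates $T_{w,\sigma}$ to $T_{w c^\gamma/c,\sigma}$.
2. By Hilbert~90 for the cyclic extension $\LL/\F$, equal norms force $w'=w c^\gamma c^{-1}$ for some $c$. So equal norms give conjugate operators.
3. \emph{Frobenius twist.} Let $\tau:x\mapsto x^{q^{n}}$ act coordinatewise on $V$. It is $\KK$-linear and commutes with $\gamma$. It conjugates $T_{w,\sigma}$ to $T_{w^\tau,\sigma}$, which replaces $u$ by $u^{q^n}$.
4. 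Composing powers of $\tau$ with a scaling handles $u'=u^{q^{nj}}$, giving (i).

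The main obstacle is part (a): obtaining the idempotent decomposition and pinning down the semilinearity automorphism as $\gamma$, including the coordinate twist in step 8.
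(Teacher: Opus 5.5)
The paper gives no proof of this statement; it quotes it from Dempwolff. So your argument has to stand on its own. Part (a) is a workable route: the idempotent decomposition does yield the model $T_{w,\sigma}$ with the $\KK$-structure \eqref{eq:vectorspaceconstruction}, if you take $e_i:=T^{-i}e_0$ and transport the $\LL$-structure of $V_0$ by powers of $T$. The one soft spot in (a) is step 3. The claim ``it contains a field of order $q^s$'' is unjustified. Instead, $\deg F=s$ follows from $R/RF(X^n)\cong M_n(\F_{q^{\deg F}})$, whose simple module has order $q^{n\deg F}=|V|$.

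\textbf{The genuine gap is in (b), step 2 of (ii)$\Leftrightarrow$(iii).} You ignore that $\KK$ acts on the $i$-th coordinate through $\sigma^i$.

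\emph{The minimal polynomial is wrong.} For $P\in\KK[Y]$ one has $P(T^n_{w,\sigma})=0$ if and only if $P^{\sigma^i}(u)=0$ for all $0\le i\le \ell-1$. Hence the minimal polynomial of $T^n_{w,\sigma}$ over $\KK$ is the product of the $\ell$ distinct twisted factors $m_u^{\sigma^{-i}}$. This product is $F$, the minimal polynomial of $u$ over $\F_q$, not $m_u$; the two agree only when $\ell=1$.

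\emph{So (ii)$\Rightarrow$(iii) fails under your reading.} With (iii) read as $u'=u^{q^{nj}}$, the implication is false whenever $\ell>1$. Take $n=s=2$. Then $\LL=\KK=\F$, $\gamma=\id$, and in $\KK$-coordinates $T^2_{w,\sigma}=\mathrm{diag}(u,u^q)$. This is conjugate to $T^2_{w',\sigma}$ with $\N_{\LL/\F}(w')=u^q$, yet $u^q\neq u$ is not a $\KK$-conjugate of $u\in\KK$.

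\emph{The correct reading leaves (iii)$\Rightarrow$(i) incomplete.} The reading consistent with (i) and (ii), and the one the paper actually uses, is that $\N_{\LL/\F}(w)$ and $\N_{\LL/\F}(w')$ are conjugate over $\F_q$, i.e.\ roots of the same $F$. For instance, the proof of Theorem~\ref{th:polydescription} applies (b) to two arbitrary roots of $F$. Under this reading, (ii)$\Rightarrow$(iii) is immediate: conjugate maps share their minimal polynomial over $\F_q$, and $\F_q$-scalars are untwisted. But your scalings plus the coordinatewise $x\mapsto x^{q^n}$ only reach the $s/\ell$ conjugates $u^{q^{nj}}$.

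\emph{Fix.} You need one more $\KK$-linear conjugator. Take $\theta\in\Aut(\LL)$ with $\theta|_{\KK}=\sigma^{-1}$, and let $\Theta$ apply $\theta$ coordinatewise. Then $\Theta$ is $\sigma^{-1}$-semilinear, so $\Theta\circ T_{w,\sigma}$ is $\KK$-linear. It conjugates $T_{w,\sigma}$ to $\Theta T_{w,\sigma}\Theta^{-1}=T_{\theta(w),\sigma}$, whose norm is $\theta(u)$. Since $\Aut(\LL)$ maps onto $\Gal(\F/\F_q)$, combining this with your Hilbert~90 scalings and the coordinatewise $x\mapsto x^{q^n}$ (which together handle the $\KK$-conjugates) reaches every $\F_q$-conjugate. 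More simply, reuse your step 2 of (a): $(V,T_{w,\sigma})$ and $(V,T_{w',\sigma})$ are both simple modules over $R/RF(X^n)\cong M_n(\F_{q^s})$, hence isomorphic, which is exactly (i).
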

	
%	\begin{theorem}[cf.\ \textnormal{\cite[Theorem~3.4]{Dempwolff2010onirreducible}}]
%		Let \(V'\) be an \(s\)-dimensional \(\F_{q^n}\)-vector space, and let \(T':V'\to V'\) be an irreducible \(\sigma\)-semilinear map, where \(\sigma\) is a generator of \(\Gal(\F_{q^n}/\F_q)\). Then there exist \(w\in \LL^\times\), with \(\fq\left(\N_{\LL/\F}(w)\right)=\F_{q^s}\), and an invertible \(\F_{q^n}\)-linear map
%		\[
%		\phi:V\longrightarrow V'
%		\]
%		such that
%		\[
%		\phi^{-1}\circ T'\circ \phi=T_{w,\sigma}.
%		\]
%	\end{theorem}	
	We now relate irreducible semilinear transformations to skew-polynomial modules.
	
	Let \(f\in R\) be an irreducible skew polynomial of degree \(s\), and let
	\[
	f^*=F(X^n),
	\qquad F(Y)\in \F_q[Y],
	\]
	be its bound. Consider the quotient
	\[
	R_f:=R/Rf,
	\]
	viewed as a left \(\F_{q^n}\)-vector space. Then \(R_f\) has dimension \(s\) over \(\F_{q^n}\).
	As recalled in \eqref{eq:leftmultiplicationmaps}, every class
	\(\bar a=a+RF(X^n)\in R_F\) induces an \(E_f\)-linear endomorphism \(L_{\bar a}\) of \(R_f\).
	In particular, we write \(L_X\) for the map induced by left multiplication by \(X+RF(X^n)\).
	
	When \(R_f\) is regarded as an \(\F_{q^n}\)-vector space, the map \(L_X\) is \(\sigma\)-semilinear. Indeed, for every \(\alpha\in \F_{q^n}\) and \(v\in R_f\),
	\[
	L_X(\alpha v)
	=X(\alpha v)\bmod_r f
	=\sigma(\alpha)\,(Xv\bmod_r f)
	=\sigma(\alpha)L_X(v).
	\]
	\begin{theorem}[see \textnormal{\cite[Theorem~13]{lavrauw2013semifields}}]\label{th:irreducibleLx}
		Let \(f\in R\) be an irreducible skew polynomial of degree \(s\). Then \(L_X\) is an irreducible \(\sigma\)-semilinear map on the \(\F_{q^n}\)-vector space \(R_f\).
	\end{theorem}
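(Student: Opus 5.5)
The plan is to translate $L_X$-invariant subspaces into left $R$-submodules of $R_f$. Then irreducibility of $f$ will force the only possibilities to be $\{0\}$ and $R_f$. We already know from the computation preceding the statement that $L_X$ is well defined on $R_f$ and $\sigma$-semilinear with respect to the left $\F_{q^n}$-structure. So it remains to show that any $\F_{q^n}$-subspace $W\subseteq R_f$ with $L_X(W)\subseteq W$ is trivial or everything.

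First I will observe that an additive subgroup $W\subseteq R_f$ closed under left multiplication by scalars $\alpha\in\F_{q^n}$ and under $L_X$ is closed under left multiplication by every monomial $\alpha X^i$, since $\alpha X^i\cdot v=\alpha\,L_X^i(v)$. By additivity it is then closed under left multiplication by every element of $R$. Hence such a $W$ is a left $R$-submodule of $R/Rf$.

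Next, by the correspondence theorem, $W=M/Rf$ for a left ideal $M$ with $Rf\subseteq M\subseteq R$. Since $R$ is a left Euclidean domain, as recalled in Section~\ref{sec:skewpolynomial}, every left ideal is principal, so $M=Rg$ for some $g$. Then $f\in Rg$ gives $f=hg$ for some $h\in R$. Irreducibility of $f$ forces $\deg g=0$ or $\deg h=0$. In the first case $g$ is a nonzero constant, hence a unit, so $M=R$ and $W=R_f$. In the second case $h$ is a nonzero constant, so $Rg=Rf$ and $W=0$.

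The only point needing care is the principal-ideal step. I would justify it by taking $g\in M$ nonzero of minimal degree and applying right division: for any $m\in M$, write $m=qg+r$ with $r=0$ or $\deg r<\deg g$; since $r=m-qg\in M$, minimality forces $r=0$, so $m\in Rg$. The case $M=0$ cannot occur because $M\supseteq Rf\neq0$. I expect no real obstacle beyond this. I do not need Dempwolff's classification, nor the bound $F(X^n)$, for this statement.
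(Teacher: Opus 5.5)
Your proof is correct and complete. The paper gives no argument of its own for this statement; it only cites \cite[Theorem~13]{lavrauw2013semifields}. You supply a self-contained proof through the standard chain of correspondences: $L_X$-invariant $\F_{q^n}$-subspaces of $R/Rf$ are left $R$-submodules, these are left ideals $Rg \supseteq Rf$, and these come from right divisors $g$ of $f$. You also justify the principal-ideal step correctly by right division. Your argument uses neither Dempwolff's classification nor the bound $F(X^n)$, so nothing circular happens when the paper later combines this result with Theorem~3.4 of \cite{Dempwolff2010onirreducible}.
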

	
	The operator
	$
	L_{X^n}=(L_X)^n
	$
	is \(\F_{q^n}\)-linear on \(R_f\). Indeed, it is induced by the map
	$
	v\longmapsto X^n v \bmod_r f.
	$
	Its minimal polynomial is determined by the bound of \(f\).
	
	\begin{theorem}[see \textnormal{\cite[Theorem~3]{sheekey2020new}}]\label{th:minimalxn}
		Let \(f\neq X\) be an irreducible element of \(R\) of degree \(s\). Then the minimal polynomial of \(L_{X^n}\) on \(R_f\) over \(\F_q\) is equal to the bound of \(f\).
	\end{theorem}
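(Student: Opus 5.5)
Since $L_{X^n}$ is $\F_{q^n}$-linear on the $s$-dimensional space $R_f$, its minimal polynomial over $\F_{q^n}$ has degree at most $s$. The plan is to show that the minimal polynomial over $\F_q$, i.e.\ the monic generator of the ideal $\{G\in\F_q[Y] : G(L_{X^n})=0\}$, coincides with $F(Y)$, where $F(X^n)=f^*$. Throughout, the "bound" is identified with the monic polynomial $F(Y)$.

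\textbf{Step 1: $F$ annihilates $L_{X^n}$.} Since $F(X^n)\in Rf$ is central, for every $b+Rf\in R_f$ we have $F(X^n)b = bF(X^n)\in Rf$. Hence $F(L_{X^n})=L_{F(X^n)}=0$ on $R_f$. It follows that the minimal polynomial $\mu(Y)\in\F_q[Y]$ of $L_{X^n}$ divides $F(Y)$.

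\textbf{Step 2: minimality.} Conversely, let $G\in\F_q[Y]$ with $G(L_{X^n})=0$. Then $G(X^n)b\in Rf$ for all $b\in R$, so $G(X^n)\in\Ann_R(R/Rf)=Rf^*=RF(X^n)$. The element $G(X^n)$ is central, hence two-sided, and annihilation forces $G(X^n)\cdot 1\in Rf$; the annihilator characterization in the definition of the bound then gives $F(X^n)\mid G(X^n)$ in $R$. Comparing in the commutative subring $\F_q[X^n]\cong\F_q[Y]$, we want $F\mid G$ in $\F_q[Y]$. Write $G(X^n)=h\,F(X^n)$ with $h\in R$. Since $F(X^n)$ is central and not a zero divisor, $h$ commutes with every element and so lies in $Z(R)=\F_q[X^n]$, i.e.\ $h=H(X^n)$. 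Thus $G=HF$, and therefore $\mu=F$. This also shows $\mu$ is irreducible whenever $F$ is.

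\textbf{Where care is needed.} No step is a serious obstacle. The one point needing attention is the hypothesis $f\neq X$: it ensures $f$ has nonzero constant term, so the bound is of the form $F(X^n)$ with $F(0)\neq0$, by \cite[Lemma 2.11]{gomez2019computing} as recalled earlier. This is where the irreducibility of $f$ and the structure of bounds enter. One should also confirm that $\Ann_R(R/Rf)$ is computed with respect to left multiplication, matching the definition of $L_{X^n}$.
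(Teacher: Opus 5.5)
Your proof is correct. The paper gives no proof of its own here; it imports the statement from Sheekey's Theorem~3. So there is nothing internal to match, and your write-up supplies a self-contained argument.

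\textbf{Comparison.} The heart of your argument is that $G(L_{X^n})=L_{G(X^n)}$ for $G\in\F_q[Y]$. Hence the kernel of $G\mapsto G(L_{X^n})$ is exactly $\{G : G(X^n)\in\Ann_R(R/Rf)=RF(X^n)\}$. You then correctly show $RF(X^n)\cap\F_q[X^n]=F(X^n)\,\F_q[X^n]$ through the centrality of the cofactor $h$, using that $R$ is a domain and $Z(R)=\F_q[X^n]$. A slightly shorter alternative is Euclidean division $G=HF+Q$ in $\F_q[Y]$. Then $Q(X^n)\in RF(X^n)$ has degree $<\deg F(X^n)$, so $Q=0$.

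\textbf{What your route buys.} It makes visible that the statement is essentially definitional. Irreducibility of $f$ is used only to ensure that $f\neq X$ forces a nonzero constant term, so that the bound has the form $F(X^n)$. The same proof works verbatim for any $f$ with nonzero constant term.

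Since you read ``over $\F_q$'' as the monic generator of the annihilating ideal in $\F_q[Y]$, which is how the paper later uses the result, your opening remark about the minimal polynomial over $\F_{q^n}$ plays no role and can be dropped.
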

	
	Combining Theorems~\ref{th:classificationsirreducible}, \ref{th:irreducibleLx}, and \ref{th:minimalxn}, we obtain the following description.
	
	\begin{theorem}\label{th:irreducibleroot}
		Let \(f\neq X\) be an irreducible element of \(R\) of degree \(s\), and let
		\[
		f^*=F(X^n),
		\qquad F(Y)\in \F_q[Y],
		\]
		be its bound. Then there exist \(w\in \LL^\times\) and an invertible \(\F_{q^n}\)-linear map
		\[
		\phi:R_f\longrightarrow V
		\]
		such that
		\[
		\phi\circ L_X\circ \phi^{-1}=T_{w,\sigma},
		\]
		where \(u:=\N_{\LL/\F}(w)\) is a root of \(F(Y)\).
	\end{theorem}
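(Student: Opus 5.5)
The plan is to combine the three quoted results in order: Theorem~\ref{th:irreducibleLx} supplies irreducibility, Theorem~\ref{th:classificationsirreducible}(a) supplies a normal form, and Theorem~\ref{th:minimalxn} pins down which polynomial the norm $u$ satisfies.

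\emph{Step 1 (normal form).} By Theorem~\ref{th:irreducibleLx}, $L_X$ is an irreducible $\sigma$-semilinear map on the $s$-dimensional $\F_{q^n}$-vector space $R_f$. Fix any $\F_{q^n}$-linear isomorphism $\psi:R_f\to V$. Then $T:=\psi\circ L_X\circ\psi^{-1}$ is an irreducible $\sigma$-semilinear map on $V$, because conjugation by a linear isomorphism preserves both semilinearity and invariant subspaces. Theorem~\ref{th:classificationsirreducible}(a) gives $w\in\LL^\times$ with $\F_q(\N_{\LL/\F}(w))=\F$ and $\phi_0\in\GL_{\F_{q^n}}(V)$ such that $\phi_0^{-1}T\phi_0=T_{w,\sigma}$. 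Setting $\phi:=\phi_0^{-1}\circ\psi$, we get
\[
\phi\circ L_X\circ\phi^{-1}=T_{w,\sigma}.
\]

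\emph{Step 2 (the norm is a root of $F$).} Raising to the $n$-th power gives $\phi\circ L_{X^n}\circ\phi^{-1}=T_{w,\sigma}^n$. By \eqref{eq:powernirreducible}, $T_{w,\sigma}^n$ is scalar multiplication by $u=\N_{\LL/\F}(w)$ on $V=\LL^\ell$. Since $u\in\F$ and $\F\subseteq\LL$ acts on $V$ coordinatewise, this operator is $\F_q$-linear. Its minimal polynomial over $\F_q$ is therefore the minimal polynomial $m_u(Y)$ of $u$ over $\F_q$. This uses that $V\neq0$ and that $g(T^n_{w,\sigma})$ is multiplication by $g(u)$.

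Conjugation preserves minimal polynomials over $\F_q$. Hence $m_u(Y)$ is also the minimal polynomial of $L_{X^n}$ on $R_f$.

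By Theorem~\ref{th:minimalxn}, that minimal polynomial is the bound of $f$, read as the polynomial $F(Y)$ evaluated at $Y=X^n$. Therefore $F=m_u$, and in particular $F(u)=0$.

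\emph{Expected obstacle.} The main point needing care is this identification in Step 2. Theorem~\ref{th:minimalxn} states the minimal polynomial in terms of $F(X^n)$, and one must read it as "$F(Y)$ is the minimal polynomial of the operator $L_{X^n}$." This is consistent, since $F(L_{X^n})=L_{F(X^n)}=0$ on $R_f$ because $F(X^n)\in Rf$. With that reading, $F(u)=0$ follows as above. The degrees also match: $\deg m_u=s=\deg F$, since $\F_q(u)=\F$. This serves as a consistency check that $F$ is the minimal polynomial of $u$, not merely a multiple of it.
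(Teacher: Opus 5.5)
Your proposal is correct and follows the paper's proof: irreducibility of $L_X$ via Theorem~\ref{th:irreducibleLx}, then the normal form from Theorem~\ref{th:classificationsirreducible}(a), then comparing the minimal polynomials over $\F_q$ of $L_{X^n}$ and $T_{w,\sigma}^n=u\,\mathrm{id}_V$ using Theorem~\ref{th:minimalxn}. The only difference is cosmetic: you first transport $L_X$ to the model space $V$ through an auxiliary isomorphism $\psi$ before applying Theorem~\ref{th:classificationsirreducible}(a), a step the paper leaves implicit.
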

	
	\begin{proof}
		Since \(f\) is irreducible, Theorem~\ref{th:irreducibleLx} shows that \(L_X\) is an irreducible \(\sigma\)-semilinear map on the \(\F_{q^n}\)-vector space \(R_f\). Hence, by Theorem~\ref{th:classificationsirreducible}, there exist \(w\in \LL^\times\) and an invertible \(\F_{q^n}\)-linear map
		\[
		\phi:R_f\longrightarrow V
		\]
		such that
		\[
		\phi\circ L_X\circ \phi^{-1}=T_{w,\sigma}.
		\]
		Set
		\[
		u:=\N_{\LL/\F}(w).
		\]
		By \eqref{eq:powernirreducible}, we have
		\[
		T_{w,\sigma}^n=u\,\mathrm{id}_V.
		\]
		Therefore the minimal polynomial of \(T_{w,\sigma}^n\) over \(\F_q\) is the minimal polynomial of \(u\) over \(\F_q\). Since \(L_{X^n}=(L_X)^n\) and \(T_{w,\sigma}^n\) are conjugate, they have the same minimal polynomial over \(\F_q\). By Theorem~\ref{th:minimalxn}, the minimal polynomial of \(L_{X^n}\) is \(F(Y)\). Hence \(F(Y)\) is also the minimal polynomial of \(u\), and in particular
		\[
		F(u)=0.
		\]
		Thus \(u=\N_{\LL/\F}(w)\) is a root of \(F(Y)\), as claimed.
	\end{proof}
	
	We now make explicit the field structure naturally induced on $V$ by an irreducible
	$\sigma$-semilinear operator. Let $T\colon V\to V$ be an irreducible $\sigma$-semilinear map, where $V$ is an
	$s$-dimensional $\F_{q^n}$-vector space. Since $T^n$ is $\F_{q^n}$-linear, we may consider
	the $\F_q$-subalgebra
	\[
	\E_T:=\F_q[T^n]\subseteq \End_{\F_q}(V).
	\]
	By irreducibility, the minimal polynomial of $T^n$ over $\F_q$ has degree $s$.
	Hence $\E_T$ is a field of cardinality $q^s$, and therefore
	\[
	\E_T\cong \F_{q^s}.
	\]
	We regard $V$ as a left $\E_T$-vector space by setting
	\[
	\left(\sum_{i=0}^m \alpha_i(T^n)^i\right)\cdot v
	:=
	\sum_{i=0}^m \alpha_i\,T^{ni}(v),
	\qquad \alpha_i\in \F_q,\ v\in V.
	\]
	Since every element of $\E_T$ is a polynomial in $T^n$, it follows immediately that
	$T$ is $\E_T$-linear.
	
	We now use the conjugacy established in Theorem \ref{th:irreducibleroot} to identify \(R_F\) with \(\End_{\E_{T}}(V)\), where $T=T_{w,\sigma}$. 
	
	\begin{theorem}\label{th:polydescription}
		Let \(F(Y)\neq Y\) be an irreducible polynomial in \(\F_q[Y]\) of degree \(s\). Let \(w\in \LL^\times\) be such that
		\[
		u:=\N_{\LL/\F}(w)
		\]
		is a root of \(F(Y)\). Then the map
		\begin{equation} \label{eq:fromskewtosemilinear}
			\begin{array}{rccc}
				\Psi_w: & R_F & \longrightarrow & \End_{\E_{T_{w,\sigma}}}(V)\\[4pt]
				& \displaystyle
				\sum_{i=0}^{ns-1}\alpha_iX^i+RF(X^n)
				& \longmapsto &
				\displaystyle \sum_{i=0}^{ns-1}\alpha_i \cdot T_{w,\sigma}^i
			\end{array}
		\end{equation}
		is a ring isomorphism between $R_F$ and $\End_{\E_{T_{w,\sigma}}}(V)$.  Moreover, the map $\Psi_w$ is also $\K$-linear and $\Psi_w(\E_F)=\E_{T_{w,\sigma}}$.
	\end{theorem}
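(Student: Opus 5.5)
The plan is to define $\Psi_w$ first on the whole skew polynomial ring and then pass to the quotient. Write $T:=T_{w,\sigma}$ and $\E_T:=\E_{T_{w,\sigma}}$. Consider the map
\[
\widetilde\Psi:R\to \End_{\F_q}(V),\qquad \sum_i\alpha_iX^i\mapsto \sum_i \alpha_i\cdot T^i,
\]
where $\alpha\cdot$ is scalar multiplication on $V$ from \eqref{eq:vectorspaceconstruction}. It is additive and $\K$-linear on the left. It is multiplicative because the defining relation is respected: $T\circ(\alpha\cdot)=\sigma(\alpha)\cdot T$ by the $\sigma$-semilinearity computed above. Hence $\widetilde\Psi$ is a ring homomorphism. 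Its image consists of $\E_T$-linear maps. Indeed, $\E_T=\F_q[T^n]$, and $T^n=u\,\mathrm{id}_V$ by \eqref{eq:powernirreducible}. Since $u\in\F$ and $\F$ commutes with the $\K$-action (both are field multiplications in $\LL$, twisted coordinatewise), $T^n$ commutes with both $T$ and $\alpha\cdot$.

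Next I check that $RF(X^n)\subseteq\ker\widetilde\Psi$. We have $\widetilde\Psi(F(X^n))=F(T^n)=F(u)\,\mathrm{id}_V=0$, since $u$ is a root of $F$. So $\widetilde\Psi$ factors through $R_F$, and the induced map is exactly $\Psi_w$ in \eqref{eq:fromskewtosemilinear}. It is $\K$-linear. It sends $\E_F=\F_q[X^n]/RF(X^n)$ onto $\F_q[T^n]=\E_T$.

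For bijectivity:
\begin{itemize}
\item \emph{Injectivity.} $R_F$ is simple, since $RF(X^n)$ is maximal and $R_F$ is central simple over $\E_F$. Its image contains $\mathrm{id}_V\neq0$, so $\Psi_w$ is injective.
\item \emph{Dimension count.} $|R_F|=q^{n^2s}$. $V$ has $\F_q$-dimension $ns$, hence $\E_T$-dimension $n$ because $|\E_T|=q^s$ (the minimal polynomial of $T^n$ is $F$, of degree $s$). So $|\End_{\E_T}(V)|=(q^s)^{n^2}=q^{n^2s}$, and injectivity gives surjectivity.
\end{itemize}

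The main subtle point is the $\E_T$-linearity of the image together with the identification $\Psi_w(\E_F)=\E_T$. Both rest on $T^n$ being the scalar $u\in\F$ acting coordinatewise, and on $\F$ being fixed by $\gamma$, so that this scalar commutes with $T$. Everything else is formal.
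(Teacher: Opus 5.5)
Your proof is correct, but it follows a different route from the paper. The paper does not construct the homomorphism directly. It picks a monic irreducible right divisor $f$ of $F(X^n)$ and uses Theorem~\ref{th:irreducibleLx} together with Dempwolff's classification (Theorem~\ref{th:classificationsirreducible}, via Theorem~\ref{th:irreducibleroot}) to get a $\K$-linear isomorphism $R_f\to V$ conjugating $L_X$ into some $T_{w',\sigma}$. It then uses part (b) of the classification to pass from $T_{w',\sigma}$ to $T_{w,\sigma}$, and defines $\Psi_w$ as the isomorphism $\Lambda_f\colon R_F\to\End_{E_f}(R_f)$ transported along this conjugation.

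You instead argue directly:
\begin{enumerate}
\item you invoke the universal property of $\K[X;\sigma]$ through the relation $T\circ(\alpha\cdot)=\sigma(\alpha)\cdot T$;
\item you show $F(X^n)$ maps to $0$ because $T^n=u\,\mathrm{id}_V$ and $F(u)=0$;
\item you get injectivity from the simplicity of $R_F$;
\item you get surjectivity from $|R_F|=q^{n^2s}=|\End_{\E_T}(V)|$.
\end{enumerate}

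Your argument is more elementary and self-contained. It never uses the irreducibility of $T_{w,\sigma}$, Dempwolff's classification, the minimal-polynomial result of Theorem~\ref{th:minimalxn}, or the fact that $\Lambda_f$ is an isomorphism. It also sidesteps the step of matching the two roots $u,u'$ of $F$ through the conjugacy criterion.

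What the paper's route gives in addition is an explicit intertwiner $R_f\to V$, so that $\Psi_w(\bar a)$ is literally $L_{\bar a}$ written in new coordinates. This ties $\mathcal C_1(F)$ directly to the cyclic semifield $S_f$. That information can also be recovered from your result a posteriori: $V$ is a simple module over $R_F\cong\End_{\E_T}(V)$, and a simple Artinian ring has a unique simple module up to isomorphism, so $V\cong R_f$ as $R_F$-modules.
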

	
	\begin{proof} Let \(f\in R\) be a monic irreducible factor of \(F(X^n)\).  By \Cref{th:irreducibleLx}, we know that $L_X$ is conjugate, under an invertible map $\phi \in \mathrm{Hom}_{\F_{q^n}}(R_f,V)$, to $T_{w',\sigma}$, i.e. \[\phi\circ L_X \circ \phi^{-1} = T_{w',\sigma}.\] Here, $w' \in \LL$ is such that $u' = \N_{\LL/\F}(w')$ is a root of $F(Y)$. Now, $u = \N_{\LL/\F}(w)$ is also a root of $F(Y)$. Thus, by Theorem~\ref{th:classificationsirreducible}~(b), $T_{w,\sigma}$ and $T_{w',\sigma}$ are conjugate under an $\F_{q^n}$-linear map $\phi':V\rightarrow V$, i.e., \[\phi' \circ T_{w',\sigma} \circ \phi'^{-1} = T_{w,\sigma}.\]
		Hence, we have:
		\[
		(\phi' \circ \phi) \circ L_X \circ (\phi' \circ \phi)^{-1}=  \phi' \circ (\phi \circ L_X \circ \phi^{-1}) \circ \phi'^{-1}=  \phi' \circ T_{w',\sigma} \circ \phi'^{-1}=T_{w,\sigma}.
		\]
		Therefore, $L_X$ is conjugate to $T_{w,\sigma}$ through the $\F_{q^n}$-linear map $\phi' \circ \phi$. 
		Note also that
		\[
		(\phi' \circ \phi) \circ L_{\alpha X^i} \circ (\phi' \circ \phi)^{-1} = \alpha \cdot T_{w,\sigma}^i, 
		\]
		for any $\alpha \in \F_{q^n}$ and $i \leq sn-1$.
		Consequently, for any element $\overline{a}=\sum_{i=0}^{ns-1}\alpha_iX^i+RF(X^n)$, we have
		\[
		\begin{array}{rl}
			(\phi' \circ \phi) \circ L_{\overline{a}} \circ (\phi' \circ \phi)^{-1} & =
			(\phi' \circ \phi) \circ \left( \sum\limits_{i=0}^{ns-1}\alpha_iX^i+RF(X^n) \right) \circ (\phi' \circ \phi)^{-1} \\
			& = \sum\limits_{i=0}^{ns-1}\alpha_i\cdot T_{w,\sigma}^i \\
			&= \Psi_w(\overline{a}).
		\end{array}
		\] Moreover, since $\phi' \circ \phi$ is an invertible $\F_{q^n}$-linear map, we get that $\Psi_w$  is a ring isomorphism between $R_F$ and $\End_{\E_{T_{w,\sigma}}}(V)$. The \(\K\)-linearity is immediate from the definition. Finally, the center \(\E_F\) is generated by
		\(X^n+RF(X^n)\), and
		\[
		\Psi_w(X^n+RF(X^n))=T_{w,\sigma}^n=u\,\id_V.
		\]
		Therefore
		\[
		\Psi_w(\E_F)=\F_q[u]\,\id_V=\E_{T_{w,\sigma}}.
		\]
		This completes the proof.
	\end{proof}
	
	\section{Structural results on \texorpdfstring{$R_F$ and $\mathcal S_k(F)$}{R_F and S_{n,s,k}(F)}} \label{sec:structuralresults}
	
	The aim of this section is to derive the structural properties of \(R_F\) and of the associated code \(\mathcal S_k(F)\) that will be needed later in the study of equivalence and automorphisms. 
	
	We start by determining the centralizer of \(\F_{q^n}\) in \(R_F\), namely
	\[
	\Cen_{R_F}(\F_{q^n}):=\{\overline{g} \in R_F:\F_{q^n}\overline{g}=\overline{g}\F_{q^n}\}.
	\]
	
	\begin{proposition} \label{prop:descriptioncentraliser}
		The centralizer of \(\F_{q^n}\) in \(R_F\) is given by
		\[
		\Cen_{R_F}(\F_{q^n})
		=
		\left\{
		z(X^n)X^t+RF(X^n)
		:\ z(Y)\in\F_{q^n}[Y], \ t\in\{0,\dots,n-1\}
		\right\}.
		\]
	\end{proposition}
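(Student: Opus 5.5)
The plan is to prove the two inclusions separately, working with the unique representative of degree less than $ns$ of each class in $R_F$. We read $\Cen_{R_F}(\F_{q^n})$ as in its definition: $\overline g$ lies in it when the sets $\F_{q^n}\overline g$ and $\overline g\,\F_{q^n}$ coincide.

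\emph{Inclusion $\supseteq$.} Let $\overline g=z(X^n)X^t+RF(X^n)$ with $z(Y)\in\F_{q^n}[Y]$. Since $\sigma$ has order $n$, the element $X^n$ commutes with every element of $\F_{q^n}$. Hence, for every $\alpha\in\F_{q^n}$,
\[
g\,\alpha \;=\; z(X^n)\,\sigma^t(\alpha)\,X^t \;=\; \sigma^t(\alpha)\,g .
\]
Because $\sigma^t$ is a bijection of $\F_{q^n}$, this gives $\overline g\,\F_{q^n}=\F_{q^n}\,\overline g$.

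\emph{Inclusion $\subseteq$: decomposition.} Take $\overline g\in\Cen_{R_F}(\F_{q^n})$ with representative $g=\sum_{i<ns}g_iX^i$. Group its monomials by the residue of the exponent modulo $n$:
\[
g=\sum_{t=0}^{n-1} z_t(X^n)X^t,
\qquad z_t(Y)\in\F_{q^n}[Y],\quad \deg z_t<s .
\]
This decomposition is unique. For $\alpha,\beta\in\F_{q^n}$ we have
\[
\alpha g=\sum_t \alpha z_t(X^n)X^t
\qquad\text{and}\qquad
g\beta=\sum_t z_t(X^n)\,\sigma^t(\beta)\,X^t .
\]
Multiplying by a constant does not raise the degree, so both expressions are again reduced representatives of degree less than $ns$. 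Therefore $\alpha\overline g=\overline g\beta$ holds in $R_F$ exactly when these two polynomials agree in $R$. Comparing coefficients term by term, this means $\alpha z_t=\sigma^t(\beta)z_t$ for every $t$.

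\emph{Inclusion $\subseteq$: conclusion.} Now choose $\alpha$ to be a primitive element of $\F_{q^n}$. By hypothesis there is some $\beta$ with $\alpha\overline g=\overline g\beta$. Suppose two distinct indices $t_1\neq t_2$ both have $z_{t_1}\neq0$ and $z_{t_2}\neq0$. Then $\alpha=\sigma^{t_1}(\beta)=\sigma^{t_2}(\beta)$. Hence $\sigma^{t_2-t_1}$ fixes $\alpha$, and therefore fixes all of $\F_{q^n}$. This contradicts $0<|t_2-t_1|<n$, since $\sigma$ has order $n$. So at most one $z_t$ is nonzero, and $\overline g=z_t(X^n)X^t+RF(X^n)$ has the stated form; the case $\overline g=0$ is covered by $z=0$.

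The only delicate point is justifying the coefficient comparison. It relies on the uniqueness of reduced representatives in $R_F$ and on the fact that multiplication by scalars on either side preserves both the degree and the residue class of each exponent modulo $n$. I expect no other obstacle.
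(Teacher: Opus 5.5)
Your proof is correct and follows essentially the same route as the paper: you get $\supseteq$ from $X^t\alpha=\sigma^t(\alpha)X^t$ together with the fact that $X^n$ commutes with $\F_{q^n}$, and $\subseteq$ by comparing coefficients of the reduced representatives in $\alpha\bar g=\bar g\beta$, which forces the nonzero coefficients into a single residue class modulo $n$. The only cosmetic difference is that you fix a single primitive element $\alpha$, whereas the paper lets $\alpha$ range over all of $\F_{q^n}$ to conclude $n\mid(i-j)$.
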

	
	\begin{proof}
		Let
		$
		\bar g=z(X^n)X^t+RF(X^n),
		$
		where \(z(Y)\in \F_{q^n}[Y]\), and \(t\in\{0,\dots,n-1\}\). We show that
		$
		\bar g\in \Cen_{R_F}(\F_{q^n})$.
		Let \(\alpha\in \F_{q^n}\). Since \(X^n\) commutes with every element of \(\F_{q^n}\), we have
		\[
		\alpha z(X^n)=z(X^n)\alpha.
		\]
		Moreover,
		\[
		\alpha X^t = X^t\sigma^{-t}(\alpha).
		\]
		Therefore
		\[
		\alpha z(X^n)X^t
		=
		z(X^n)\alpha X^t
		=
		z(X^n)X^t\sigma^{-t}(\alpha).
		\]
		Since \(\sigma^{-t}(\alpha)\in \F_{q^n}\), this shows that
		\[
		\alpha \bar g=\bar g\,\beta
		\qquad\text{with}\qquad
		\beta=\sigma^{-t}(\alpha)\in \F_{q^n}.
		\]
		Hence \(\F_{q^n}\bar g\subseteq \bar g\F_{q^n}\). Replacing \(\alpha\) by \(\sigma^t(\alpha)\), one also gets
		$
		\bar g\F_{q^n}\subseteq \F_{q^n}\bar g$. Thus
		$
		\F_{q^n}\bar g=\bar g\F_{q^n},$
		and so \(\bar g\in \Cen_{R_F}(\F_{q^n})\). For the reverse inclusion, let \(\overline{g}=g+RF(X^n) \in  \Cen_{R_F}(\fqn)\). Then, for every $\alpha \in \fqn$, there exist \(\beta\in\F_{q^n}\) such that
		\[
		\alpha \overline{g}=\overline{g}\beta.
		\]
		Write
		\[
		g=\sum_{i=0}^{ns-1} \gamma_iX^i,\qquad \gamma_i\in\F_{q^n}.
		\]
		Since \(X^i\beta=\sigma^i(\beta)X^i\) and $\deg(\alpha \overline{g})=\deg(\overline{g}\beta)<ns$ we obtain
		\[
		\alpha\sum_{i=0}^{ns-1}\gamma_iX^i
		=
		\sum_{i=0}^{ns-1}\gamma_iX^i\beta
		=
		\sum_{i=0}^{ns-1}\gamma_i\sigma^i(\beta)X^i.
		\]
		By comparing coefficients, it follows that
		$
		\alpha \gamma_i=\gamma_i\sigma^i(\beta),
		$
		for $i=0,\ldots,ns-1$. Equivalently,
		$
		\gamma_i(\alpha-\sigma^i(\beta))=0.
		$
		Hence, for each \(i\), either \(\gamma_i=0\) or \(\alpha=\sigma^i(\beta)\). Assume now that \(\gamma_i\neq 0\) and \(\gamma_j\neq 0\) for some \(i\neq j\). Then
		$
		\alpha=\sigma^i(\beta)=\sigma^j(\beta),
		$
		and therefore
		$
		\sigma^{i-j}(\beta)=\beta$.
		It follows that \(\beta\in \F_{q^{\gcd(n,i-j)}}\), and hence also \(\alpha\in \F_{q^{\gcd(n,i-j)}}\). Since this holds for any $\alpha \in \fqn$, this forces
		$
		n\mid (i-j)$.
		Therefore all nonzero coefficients of \(g\) occur in degrees belonging to a single residue class modulo \(n\). In other words, there exists \(t\in\{0,\dots,n-1\}\) such that
		\[
		g=\sum_{w=0}^{s-1} \gamma_{nw+t}X^{nw+t}=z(X^n)X^t,
		\]
		for some $z(Y) \in \fqn[Y]$.
	\end{proof}
	
	Proposition~\ref{prop:descriptioncentraliser} can be translated into the
	semilinear setting, as stated in the following proposition.
	
	\begin{proposition}\label{prop:semilinear-centralizer}
		Let \(F(Y)\neq Y\) be an irreducible polynomial in \(\F_q[Y]\) of degree \(s\). Let
		\(w\in \LL^\times\), set
		\[
		T:=T_{w,\sigma},
		\qquad
		u:=\N_{\LL/\F}(w),
		\]
		and assume that \(F(u)=0\). Define
		\[
		\mathcal Z:=\{z(u)\,\id_V : z(Y)\in \K[Y]\}\subseteq \End_{\E_T}(V).
		\]
		Then
		\[
		\Cen_{\End_{\E_T}(V)}(\K)
		=
		\left\{
		z(u)\,T^t : z(Y)\in \K[Y],\ t\in\{0,\dots,n-1\}
		\right\}.
		\]
		Moreover,
		\[
		\End_{\E_T}(V)=\bigoplus_{t=0}^{n-1}\mathcal Z\,T^t,
		\]
		and \(\mathcal Z\,T^t\) is precisely the space of \(\sigma^t\)-semilinear endomorphisms of \(V\)
		with respect to the \(\K\)-vector space structure \eqref{eq:vectorspaceconstruction}.
	\end{proposition}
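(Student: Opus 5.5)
The natural route is to transport Proposition~\ref{prop:descriptioncentraliser} through the isomorphism $\Psi_w$ of Theorem~\ref{th:polydescription}. Since $\Psi_w$ is a ring isomorphism and is $\K$-linear, it sends the copy $\{\alpha+RF(X^n):\alpha\in\K\}$ of $\K$ in $R_F$ onto the scalar operators $\alpha\cdot\id_V$ (that is, $\Psi_w(\alpha)=\alpha\cdot T^0$). Consequently $\Psi_w$ maps $\Cen_{R_F}(\K)$ bijectively onto $\Cen_{\End_{\E_T}(V)}(\K)$, where the latter is defined by the same condition $\K g=g\K$. It remains to compute $\Psi_w(z(X^n)X^t+RF(X^n))$. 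Because $\Psi_w$ is multiplicative and $\Psi_w(X^n)=T^n=u\,\id_V$ by \eqref{eq:powernirreducible}, we get $\Psi_w(z(X^n)X^t)=z(u)T^t$. Here we must check that a coefficient $\gamma\in\K$ of $z$ is sent to the operator $\gamma\cdot$ and that these compose correctly. This works because $\Psi_w(\gamma X^{nj}X^t)=\gamma\cdot T^{nj+t}=\gamma\cdot u^j T^t$, and $u^j\,\id_V$ is $\K$-linear. This gives the first displayed equality.

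For the direct sum decomposition, apply $\Psi_w$ to $R_F$. Every class has a unique representative $\sum_{i<ns}\alpha_iX^i$, and grouping the indices by residue modulo $n$ gives $R_F=\bigoplus_{t=0}^{n-1}\{z(X^n)X^t: z\in\K[Y],\ \deg z<s\}$. The sum is direct by uniqueness of representatives, since the degrees occurring in different summands are disjoint. Applying $\Psi_w$ yields $\End_{\E_T}(V)=\bigoplus_t \mathcal Z T^t$. Here we note that $\mathcal Z=\{z(u)\id_V\}$ equals $\Psi_w(\{z(X^n)+RF(X^n):\deg z<s\})$, because $F(u)=0$ lets us reduce $z$ modulo $F$, and the latter set is closed under the corresponding reduction in $R_F$.

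Finally, we identify $\mathcal Z T^t$ with the $\sigma^t$-semilinear endomorphisms. First, $z(u)\id_V$ is $\K$-linear on $V$, because $u\in\F=\F_{q^s}\subseteq\LL$ acts coordinatewise by multiplication in $\LL$. This commutes with the twisted $\K$-action \eqref{eq:vectorspaceconstruction}, since $\LL$ is commutative. Also $T^t$ is $\sigma^t$-semilinear, so every element of $\mathcal Z T^t$ is $\sigma^t$-semilinear. Each such map is additive and commutes with $T^n=u\,\id_V$, hence lies in $\End_{\E_T}(V)$. Conversely, let $g\in\End_{\E_T}(V)$ be $\sigma^t$-semilinear. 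Decompose $g=\sum_{t'}z_{t'}(u)T^{t'}$ and compare $g\alpha=\sigma^t(\alpha)g$ for all $\alpha\in\K$. Using $T^{t'}\alpha=\sigma^{t'}(\alpha)T^{t'}$, this gives $\sum_{t'}(\sigma^{t'}(\alpha)-\sigma^t(\alpha))z_{t'}(u)T^{t'}=0$. Directness of the sum then forces $(\sigma^{t'}(\alpha)-\sigma^t(\alpha))z_{t'}(u)T^{t'}=0$ for each $t'$. Choosing $\alpha$ with $\sigma^{t'}(\alpha)\ne\sigma^t(\alpha)$ for $t'\ne t$ (possible since $\sigma$ has order $n$), and using that $T$ is invertible and scalars from $\K$ act invertibly, we get $z_{t'}(u)=0$ for $t'\neq t$.

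The main obstacle is bookkeeping rather than depth. One must make sure that $\Psi_w$ respects both the left $\K$-scalars and the ring structure simultaneously, so that the product $z(u)T^t$ is well defined inside $\End_{\E_T}(V)$. The delicate point is showing that $z(u)\id_V$ commutes with the twisted $\K$-action on $V$, which is what makes $\mathcal Z$ a $\K$-space of linear maps. I would verify this directly from \eqref{eq:vectorspaceconstruction} before invoking the transport argument.
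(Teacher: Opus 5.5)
Your proposal is correct and follows the paper's proof. It transports Proposition~\ref{prop:descriptioncentraliser} through $\Psi_w$ (using $\Psi_w(X)=T$ and $\Psi_w(X^n)=u\,\id_V$), obtains the direct sum from the unique degree-$<ns$ representatives grouped by residue modulo $n$, and reads off the semilinear types; your explicit comparison of $g\alpha$ with $\sigma^t(\alpha)g$ for the converse just spells out what the paper leaves implicit when it says the direct sum decomposition gives the unique decomposition by semilinear type.
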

	
	\begin{proof}
		By Proposition~\ref{prop:descriptioncentraliser}, in the quotient algebra \(R_F=R/RF(X^n)\) one has
		\[
		\Cen_{R_F}(\K)
		=
		\left\{
		z(X^n)X^t+RF(X^n): z(Y)\in \K[Y],\ t\in\{0,\dots,n-1\}
		\right\}.
		\]
		Applying the isomorphism \(\Psi_w\) of Theorem~\ref{th:polydescription}, and using
		\[
		\Psi_w(X+RF(X^n))=T
		\qquad\text{and}\qquad
		\Psi_w(X^n+RF(X^n))=T^n=u\,\id_V,
		\]
		we obtain
		\[
		\Psi_w\bigl(z(X^n)X^t+RF(X^n)\bigr)=z(u)\,T^t.
		\]
		This proves the description of the centralizer.
		
		Since each element of \(R_F\) has a unique representative of degree \(<ns\), every class in \(R_F\)
		admits a unique decomposition of the form
		\[
		\sum_{t=0}^{n-1} z_t(X^n)X^t+RF(X^n),
		\qquad z_t(Y)\in \K[Y].
		\]
		Applying \(\Psi_w\), we obtain
		\[
		\End_{\E_T}(V)=\bigoplus_{t=0}^{n-1}\mathcal Z\,T^t.
		\]
		
		Finally, every element of \(\mathcal Z\) is \(\K\)-linear, whereas \(T^t\) is
		\(\sigma^t\)-semilinear. Hence each element of \(\mathcal Z\,T^t\) is \(\sigma^t\)-semilinear.
		Conversely, the direct sum decomposition above shows that every endomorphism in \(\End_{\E_T}(V)\)
		has a unique decomposition according to its semilinear type. This concludes the proof.
	\end{proof}
	
	We shall repeatedly use the fact that
	$
	\Cen_{R_F}(\F_{q^n})^\times$
	is closed under inversion. Indeed, if \(\bar g\in \Cen_{R_F}(\F_{q^n})^\times\), then for every \(\lambda\in \F_{q^n}\) there exists \(\mu\in \F_{q^n}\) such that
	$
	\lambda \bar g=\bar g\mu$.
	Multiplying by \(\bar g^{-1}\) on the left and on the right, we obtain
	\[
	\bar g^{-1}\lambda=\mu \bar g^{-1}\in \F_{q^n}\bar g^{-1},
	\]
	and hence
	$
	\bar g^{-1}\F_{q^n}\subseteq \F_{q^n}\bar g^{-1}.
	$
	By symmetry, equality follows, so that
	$
	\bar g^{-1}\in \Cen_{R_F}(\F_{q^n})^\times.$
	
	We now characterize the elements of \(\Cen_{R_F}(\F_{q^n})^\times\). We first show that \(X+RF(X^n)\) is invertible in \(R_F\). Since \(F(Y)\neq Y\), the constant term of \(F(Y)\) is nonzero. Hence the skew polynomials \(F(X^n)\) and \(X\) are right coprime in \(R\), that is,
	$
	\gcrd(X,F(X^n))=1$.
	By Theorem \ref{th:rankpolynomial_rephrased}, we then have
	\[
	\rk\bigl(X+RF(X^n)\bigr)
	=
	\frac{1}{s}\bigl(\deg(F(X^n))-\deg(\gcrd(X,F(X^n)))\bigr)
	=
	\frac{1}{s}(ns-0)
	=
	n.
	\]
	Thus \(X+RF(X^n)\) has full rank in \(R_F\cong M_n(E_F)\), and therefore it is invertible. It follows that
	$
	X^t+RF(X^n)
	$
	is invertible for every \(t\in\{0,\dots,n-1\}\). 
	
	\begin{lemma}\label{lem:units-Cen}
		Let
		$
		\bar g=z(X^n)X^t+RF(X^n)\in \Cen_{R_F}(\F_{q^n}),
		$
		where \(t\in\{0,\dots,n-1\}\) and \(z(Y)\in \F_{q^n}[Y]\). Then \(\bar g\) is invertible if and only if
		$
		\gcd(z(Y),F(Y))=1$
		in \(\F_{q^n}[Y]\).
	\end{lemma}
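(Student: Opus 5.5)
Since $X^t+RF(X^n)$ is invertible in $R_F$ for every $t$ (shown just before the statement), $\bar g$ is invertible if and only if $\bar z:=z(X^n)+RF(X^n)$ is invertible. So the whole question reduces to deciding when the class of $z(X^n)$ is a unit of $R_F$.

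To study $\bar z$, I will pass to the semilinear model through the isomorphism $\Psi_w$ of Theorem~\ref{th:polydescription}. Choose $w$ with $u=\N_{\LL/\F}(w)$ a root of $F$. Then $\Psi_w(\bar z)=z(u)\,\id_V$, where $z(u)\in\LL$ because $\K\subseteq\LL$ and $u\in\F\subseteq\LL$. The scalar $z(u)$ acts on $V=\LL^\ell$ through the structure \eqref{eq:vectorspaceconstruction}: the coefficients $\alpha$ of $z$ act on the $i$-th coordinate by $\sigma^i(\alpha)$. Writing $z=\sum_j \alpha_j Y^j$, the operator $\sum_j\alpha_j T^{nj}$ sends $(x_0,\dots,x_{\ell-1})$ to $\bigl(z(u)x_0,\ z^{\sigma}(u)x_1,\dots, z^{\sigma^{\ell-1}}(u)x_{\ell-1}\bigr)$, since $T^n=u\,\id_V$ multiplies each coordinate by $u$. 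Hence $\Psi_w(\bar z)$ is invertible if and only if $z^{\sigma^i}(u)\neq0$ for all $i=0,\dots,\ell-1$.

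The main step is to translate this into the condition $\gcd(z,F)=1$ in $\K[Y]$, equivalently that $z$ vanishes at no root of $F$ in a common extension. The roots of $F$ are $u^{q^j}$ for $j=0,\dots,s-1$. Suppose $z(u^{q^j})=0$ for some $j$. Apply a suitable automorphism $\tau\in\Gal(\LL/\F_q)$ with $\tau(u^{q^j})=u$. Then $z^{\tau}(u)=0$, and $\tau|_\K=\sigma^{m}$ for some $m$. The difficulty is that we only control the twists $\sigma^i$ with $i<\ell$, while $m$ can be any residue mod $n$. The fix is to adjust $\tau$ by elements of $\Gal(\LL/\F)$, which fix $u$ and restrict to $\K$ as powers of $\sigma^\ell$ (since $\gamma|_\K=\sigma^\ell$ and $\gamma$ generates $\Gal(\LL/\F)$). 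Thus $m$ may be reduced modulo $\ell$, giving $z^{\sigma^i}(u)=0$ for some $0\le i<\ell$. This exponent bookkeeping between the Galois groups of $\LL$, $\K$ and $\F$ is the point I expect to need care.

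The converse runs the same argument backwards. If $z^{\sigma^i}(u)=0$, apply an extension of $\sigma^{-i}$ to $\LL$ to get $z(\sigma^{-i}(u))=0$. Here $\sigma^{-i}(u)$ is again a root of $F$, so $z$ and $F$ share a root and are not coprime. Combining both directions with the reduction in the first paragraph gives the lemma.

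Alternatively, one could argue purely ring-theoretically, avoiding coordinates:

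1. If $\gcd(z,F)=1$ in $\K[Y]$, Bézout gives $a z+bF=1$ with $a,b\in\K[Y]$. Since $X^n$ is central and $\K$ commutes with $X^n$, this yields $a(X^n)z(X^n)\equiv1$ modulo $RF(X^n)$, so $\bar z$ is invertible.
2. Conversely, if $d=\gcd(z,F)$ is nonconstant, let $h=F/d\in\K[Y]$. Then $z(X^n)h(X^n)$ is a multiple of $F(X^n)$, so $\bar z\cdot\overline{h(X^n)}=0$. Moreover $\overline{h(X^n)}\neq0$, because $\deg h(X^n)<ns$. Thus $\bar z$ is a zero divisor and not a unit.

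This route is cleaner and I would present it as the main proof, keeping the semilinear computation only as a sanity check.
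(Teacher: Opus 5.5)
Your main ring-theoretic argument is correct and is essentially the paper's proof. You reduce to $z(X^n)+RF(X^n)$ using the invertibility of $X^t+RF(X^n)$, and then work in the commutative subring $\{u(X^n)+RF(X^n)\}\cong \K[Y]/(F(Y))$. Your zero-divisor step, using $\deg h(X^n)<ns$, also makes explicit a point the paper leaves tacit: failing to be invertible in that subring rules out invertibility in all of $R_F$. The semilinear computation is sound but not needed.
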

	
	\begin{proof}
		The above discussion shows that $X^t+RF(X^n)$ is invertible in $R_F$. Consequently, \(\bar g=z(X^n)X^t+RF(X^n)\) is invertible if and only if
		$
		z(X^n)+RF(X^n)
		$
		is invertible. Now consider the map
		\[
		\varphi:\F_{q^n}[Y]\longrightarrow R_F,
		\qquad
		u(Y)\longmapsto u(X^n)+RF(X^n).
		\]
		Since \(X^n\) commutes with every element of \(\F_{q^n}\), this is a ring homomorphism. Moreover,
		\[
		\ker(\varphi)=\fqn[Y]F(Y),
		\]
		because \(u(X^n)\in RF(X^n)\) if and only if \(F(Y)\mid u(Y)\) in \(\F_{q^n}[Y]\). Hence \(\varphi\) induces an isomorphism
		\[
		\F_{q^n}[Y]/(F(Y))
		\;\cong\;
		\{\,u(X^n)+RF(X^n):u(Y)\in \F_{q^n}[Y]\,\}.
		\]
		Under this identification, the element \(z(X^n)+RF(X^n)\) corresponds to the class of \(z(Y)\) modulo \(F(Y)\). Therefore \(z(X^n)+RF(X^n)\) is invertible if and only if the class of \(z(Y)\) is invertible in \(\F_{q^n}[Y]/\fqn[Y]F(Y)\), and this happens if and only if
		$
		\gcd(z(Y),F(Y))=1
		$
		in \(\F_{q^n}[Y]\). This proves the claim.
	\end{proof}

	We next record the shape of the conjugates of \(\alpha X+RF(X^n)\) by elements of
	\(\Cen_{R_F}(\F_{q^n})^\times\). 
	
	\begin{lemma}\label{lm:shapeconjugatesalphax}
		Let \(\bar g\in \Cen_{R_F}(\F_{q^n})^\times\) and let \(\alpha\in \F_{q^n}^\times\). Then there exists a polynomial \(z(Y)\in \F_{q^n}[Y]\), such that
		\[
		\bar g\,\alpha X\,\bar g^{-1}=z(X^n)X+RF(X^n).
		\]
	\end{lemma}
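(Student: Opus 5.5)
The idea is to show that $\bar g\,\alpha X\,\bar g^{-1}$ lies in the centralizer $\Cen_{R_F}(\F_{q^n})$ and that its semilinear type is $\sigma$; Proposition~\ref{prop:descriptioncentraliser} then forces the shape $z(X^n)X$.

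First, $\Cen_{R_F}(\F_{q^n})^\times$ is closed under products and, as shown just before Lemma~\ref{lem:units-Cen}, under inversion. Clearly $\alpha X+RF(X^n)$ lies in it, by Proposition~\ref{prop:descriptioncentraliser} with $z=\alpha$ constant and $t=1$. It is also invertible, since $X+RF(X^n)$ is invertible and $\alpha\neq 0$. Hence the conjugate $\bar h:=\bar g\,\alpha X\,\bar g^{-1}$ is an invertible element of $\Cen_{R_F}(\F_{q^n})$. By Proposition~\ref{prop:descriptioncentraliser}, $\bar h=z(X^n)X^{t'}+RF(X^n)$ for some $z\in\F_{q^n}[Y]$ and some $t'\in\{0,\dots,n-1\}$. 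It remains to show $t'=1$.

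To pin down $t'$, we track the twist each element induces on $\F_{q^n}$. Write $\bar g=z_0(X^n)X^t+RF(X^n)$. The computation in the proof of Proposition~\ref{prop:descriptioncentraliser} gives $\lambda\bar g=\bar g\,\sigma^{-t}(\lambda)$ for all $\lambda\in\F_{q^n}$. Multiplying on both sides by $\bar g^{-1}$ yields $\bar g^{-1}\lambda=\sigma^{-t}(\lambda)\bar g^{-1}$, which after substitution reads $\mu\,\bar g^{-1}=\bar g^{-1}\sigma^{t}(\mu)$. Also $\lambda\,\alpha X=\alpha X\,\sigma^{-1}(\lambda)$. Chaining these three relations,
\[
\lambda\,\bar h=\bar g\,\sigma^{-t}(\lambda)\,\alpha X\,\bar g^{-1}=\bar g\,\alpha X\,\sigma^{-t-1}(\lambda)\,\bar g^{-1}=\bar h\,\sigma^{-1}(\lambda).
\]
On the other hand, from the shape of $\bar h$ we get $\lambda\bar h=\bar h\,\sigma^{-t'}(\lambda)$. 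Subtracting the two expressions gives $\bar h\bigl(\sigma^{-1}(\lambda)-\sigma^{-t'}(\lambda)\bigr)=0$. Since $\bar h$ is invertible, $\sigma^{-1}(\lambda)=\sigma^{-t'}(\lambda)$ for every $\lambda\in\F_{q^n}$. Choosing $\lambda$ a generator of $\F_{q^n}$ over $\F_q$ forces $t'\equiv 1\pmod n$, and so $t'=1$ because $n\ge 2$.

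An alternative route, which avoids the commutation bookkeeping, is to apply $\Psi_w$ and use the graded decomposition of Proposition~\ref{prop:semilinear-centralizer}. There, $\bar g$ corresponds to a $\sigma^t$-semilinear map, $\bar g^{-1}$ to a $\sigma^{-t}$-semilinear map, and $\alpha X$ to a $\sigma$-semilinear map. Their composite is therefore $\sigma$-semilinear, so it lies in $\mathcal Z\,T$, and pulling back through $\Psi_w^{-1}$ gives the form $z(X^n)X$. The main obstacle is the sign and index bookkeeping for the twist of $\bar g^{-1}$; the semilinear viewpoint makes it transparent, since the inverse of a $\sigma^t$-semilinear bijection is $\sigma^{-t}$-semilinear.
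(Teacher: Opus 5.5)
Your proof is correct, but it takes a different route from the paper's.

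\textbf{The paper's route.} It applies Proposition~\ref{prop:descriptioncentraliser} to both factors, writing $\bar g=u(X^n)X^t$ and $\bar g^{-1}=v(X^n)X^r$. It then asserts that $\bar g\bar g^{-1}=1$ forces $t+r\equiv 0\pmod n$. Finally it multiplies out explicitly:
\[
\bar g\,\alpha X\,\bar g^{-1}=u(X^n)\sigma^t(\alpha)v^{\sigma^{t+1}}(X^n)X^{t+r+1},
\qquad t+r+1\equiv 1 \pmod n .
\]

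\textbf{Your route.} You first note that $\bar h=\bar g\,\alpha X\,\bar g^{-1}$ lies in the group $\Cen_{R_F}(\F_{q^n})^\times$. You then apply Proposition~\ref{prop:descriptioncentraliser} to $\bar h$ itself. To fix its exponent $t'$, you compare the twist $\lambda\bar h=\bar h\,\sigma^{-t'}(\lambda)$ read off from its shape with the twist $\sigma^{-1}$ obtained by chaining the commutation relations of $\bar g$, $\alpha X$ and $\bar g^{-1}$.

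\textbf{Comparison.} The paper's computation has the advantage of giving $z$ explicitly. However, its step ``$t+r\equiv 0\pmod n$'' is stated without justification. It tacitly relies on the residue class of the exponent modulo $n$ being well defined in $R_F$, i.e.\ on uniqueness of the decomposition $\sum_{t} z_t(X^n)X^t$ after reducing by the central element $F(X^n)$. Your twist argument supplies exactly this fact intrinsically: a nonzero element of the centralizer determines its exponent class through the automorphism of $\F_{q^n}$ it induces by commutation. Your alternative via Proposition~\ref{prop:semilinear-centralizer} is the same grading idea, expressed in the semilinear model.
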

	
	\begin{proof}
		By Proposition \ref{prop:descriptioncentraliser}, there exist polynomials \(u(Y),v(Y)\in \F_{q^n}[Y]\), and integers \(t,r\in\{0,\dots,n-1\}\) such that
		\[
		\bar g=u(X^n)X^t+RF(X^n)
		\qquad\text{and}\qquad
		\bar g^{-1}=v(X^n)X^r+RF(X^n).
		\]
		Since \(\bar g\bar g^{-1}=1\), one necessarily has
		\[
		t+r\equiv 0 \pmod n.
		\]
		Therefore,
		\[
		\bar g\,\alpha X\,\bar g^{-1}
		=
		u(X^n)X^t\alpha Xv(X^n)X^r+RF(X^n).
		\]
		Using the multiplication rule in \(R\), we obtain
		\[
		\bar g\,\alpha X\,\bar g^{-1}
		=
		u(X^n)\sigma^t(\alpha)\,v^{\sigma^{t+1}}(X^n)X^{t+r+1}+RF(X^n),
		\]
		where \(v^{\sigma^{t+1}}(Y)\) is obtained from \(v(Y)\) by applying \(\sigma^{t+1}\) to its coefficients. Since \(t+r\equiv 0 \pmod n\), the exponent \(t+r+1\) is congruent to \(1\) modulo \(n\). Hence there exists a polynomial \(z(Y)\in \F_{q^n}[Y]\), such that
		\[
		\bar g\,\alpha X\,\bar g^{-1}=z(X^n)X+RF(X^n).
		\]
		This proves the claim.
	\end{proof}
	
	The next theorem is the key structural result of this section. Under the assumption \(k\le n/2\), it characterizes the elements \(\bar b\in R_F^\times\) for which the intersections \(\mathcal C\cap \bar b\,\mathcal C\) or \(\mathcal C\cap \mathcal C\,\bar b\) have codimension one over \(\F_{q^n}\). In particular, if such an element is conjugate to a scalar multiple of \(X\) by an element of \(\Cen_{R_F}(\F_{q^n})^\times\), then it must itself be a scalar multiple of \(X\).

	\begin{theorem}\label{thm:b-linear}
		Assume that \(s>1\) and \(1\le k\le n/2\), and set
		$
		\mathcal C:=\mathcal S_k(F)\subseteq R_F$. Let \(\bar b\in R_F^\times\).
		
		\begin{enumerate}[label=\roman*)]
			\item If
			\[
			|\mathcal C\cap \bar b\,\mathcal C|=q^{n(sk-1)},
			\]
			then there exist \(\alpha,\beta,\alpha',\beta'\in\F_{q^n}\), with
			\((\alpha,\beta)\neq(0,0)\) and \((\alpha',\beta')\neq(0,0)\), such that
			\[
			\bar b=
			\bigl(\alpha+\beta X+RF(X^n)\bigr)
			\bigl(\alpha'+\beta'X+RF(X^n)\bigr)^{-1}.
			\]
			
			\item If
			\[
			|\mathcal C\cap \mathcal C\,\bar b|=q^{n(sk-1)},
			\]
			then there exist \(\alpha,\beta,\alpha',\beta'\in\F_{q^n}\), with
			\((\alpha,\beta)\neq(0,0)\) and \((\alpha',\beta')\neq(0,0)\), such that
			\[
			\bar b=
			\bigl(\alpha'+\beta'X+RF(X^n)\bigr)^{-1}
			\bigl(\alpha+\beta X+RF(X^n)\bigr).
			\]
		\end{enumerate}
	\end{theorem}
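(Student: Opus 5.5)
The plan is to turn the intersection condition into a linear relation between pairs of skew polynomials of low degree. The key tool is the degree bound \(2sk\le ns\), which comes from \(k\le n/2\): it lets us lift congruences modulo \(F(X^n)\) to honest equalities in \(R\). We treat part i) in detail; part ii) follows by the mirror-image argument, swapping left and right throughout.

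\textbf{Step 1: the space of pairs.} Fix a representative \(\tilde b\in R\) of \(\bar b\), and put
\[
M:=\{(c,a)\in R^2:\ \deg c<sk,\ \deg a<sk,\ \tilde b c\equiv a \pmod{RF(X^n)}\}.
\]
Since \(\F_{q^n}\) lies in the right idealiser of \(\mathcal C\) (Proposition~\ref{prop:invariantsSNsk}), both \(\mathcal C\cap\bar b\,\mathcal C\) and \(M\) are right \(\F_{q^n}\)-spaces. The map \((c,a)\mapsto a\) is a bijection \(M\to\mathcal C\cap\bar b\,\mathcal C\), because \(\bar b\) is invertible. So \(\dim_{\F_{q^n}}M=sk-1\), and the projection \((c,a)\mapsto c\) is injective.

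\textbf{Step 2: all pairs determine the same fraction.} This is the main step. Take two nonzero pairs \((c_1,a_1),(c_2,a_2)\in M\). Since \(R\) is also right Euclidean, the least common right multiple gives \(u,v\in R\) with
\[
c_1u=c_2v,\qquad \deg u\le\deg c_2,\qquad \deg v\le \deg c_1.
\]
Then \(a_1u-a_2v\equiv \tilde b(c_1u-c_2v)=0\) modulo the two-sided ideal \(RF(X^n)\). Moreover \(\deg(a_1u-a_2v)<2sk\le ns=\deg F(X^n)\), so in fact \(a_1u=a_2v\) in \(R\). Hence, in the skew field of fractions of the Ore domain \(R\), every pair in \(M\) gives the same element \(r=a c^{-1}\).

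Write \(r=a_0c_0^{-1}\) in reduced form, meaning \(a_0,c_0\) have no common nontrivial right factor. Then every pair in \(M\) has the form \((c_0h,a_0h)\) with \(h\in R\). Conversely, every such pair with \(\deg h<sk-d\), where \(d:=\max(\deg c_0,\deg a_0)\), lies in \(M\), because \(\tilde b c_0\equiv a_0\) is inherited from any member of \(M\). So \(M\) is exactly this set, and
\[
\dim M=sk-d.
\]
The delicate point here is to check carefully that the reduced form is compatible with the congruence modulo \(F(X^n)\). I would do this via the Euclidean structure: right-divide a minimal-degree pair out of any other pair, and use the degree bound again so that the remainder also lies in \(M\).

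\textbf{Step 3: conclude.} From \(\dim M=sk-1\) we get \(d=1\), so \(c_0=\alpha'+\beta'X\) and \(a_0=\alpha+\beta X\), with neither pair of coefficients zero since \(\bar b\) is invertible. It remains to show \(c_0\) is invertible in \(R_F\). By Theorem~\ref{th:rankpolynomial_rephrased}, \(\rk(\bar c_0)=n-\deg\gcrd(c_0,F(X^n))/s\). Since \(\deg\gcrd\le 1<s\) and the right-hand side is an integer, the gcrd must be \(1\), so \(\bar c_0\) has rank \(n\). This is where \(s>1\) is used. Therefore
\[
\bar b=(\alpha+\beta X+RF(X^n))(\alpha'+\beta'X+RF(X^n))^{-1}.
\]

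For part ii), run the same argument with \(c\tilde b\equiv a\), using the least common left multiple, the left ideal structure, and the left idealiser. This yields \(\bar b=\bar c_0^{-1}\bar a_0\).

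I expect the obstacle to be Step 2, specifically the lifting from \(R_F\) to \(R\) and the precise control of degrees in the lcm. This is exactly where the hypothesis \(k\le n/2\) has to enter.
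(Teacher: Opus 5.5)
Your argument is correct, and it takes a genuinely different route from the paper's.

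\textbf{The paper's route.} The paper stays inside $R_F$ and does linear algebra on $\mathcal D=\bar b^{-1}\mathcal C\cap\mathcal C$, splitting into two cases.
\begin{enumerate}
\item If every element of $\mathcal D$ has degree at most $sk-2$, then $\mathcal D=\langle 1,X,\dots,X^{sk-2}\rangle$, and $\deg\bar b\le 1$ follows from $\bar bX^{sk-2}\in\mathcal C$.
\item Otherwise the paper shows $\deg\bar b=ns-1$. It then runs a growth argument on $U_i=\sum_{j\le i}WX^j$, with $W=\langle 1,\bar b\rangle_{\F_{q^n}}$, in which each step raises the dimension by at least one. So $\dim(\mathcal C+\bar b\,\mathcal C)=sk+1$ forces $\dim(L+\bar bL)=3$ for $L=\langle 1,X\rangle$, and a nonzero vector of $L\cap\bar bL$ gives the required relation.
\end{enumerate}

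\textbf{Your route.} You use $2sk\le ns$ to lift the congruences to identities in $R$. All pairs then define one Ore fraction; in particular $\deg a-\deg c$ is constant on nonzero pairs. Euclidean division then shows that the pair space is the degree truncation of the right multiples of a single pair. This gives more than the statement: for any $\bar b$ with $\mathcal C\cap\bar b\,\mathcal C\neq 0$ you get $\dim_{\F_{q^n}}(\mathcal C\cap\bar b\,\mathcal C)=sk-d_1$, where $d_1$ is the degree of a minimal pair. The paper's growth argument is tailored to codimension one, but it needs no gcrd/lcrm or Ore theory.

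\textbf{One step to fix.} One sentence of Step 2 is wrong as written, and it should be replaced by the fix you sketch. From $\tilde b c_0h\equiv a_0h$ you cannot cancel $h$, because $R_F\cong M_n(\E_F)$ has zero divisors. In general the admissible $h$ form the right ideal $gR=\{h:(\tilde bc_0-a_0)h\in RF(X^n)\}$, where $g$ is a left divisor of $F(X^n)$. So $\dim M=sk-d-\deg g$, not $sk-d$. Here $\deg g\in s\mathbb Z$ and $s>1$ do force $\deg g=0$, but that has to be argued.

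It is simpler to drop the reduced form altogether:
\begin{enumerate}
\item Take a nonzero $(c_1,a_1)\in M$ with $\deg c_1$ minimal. For $(c,a)\in M$, write $c=c_1q+\rho$ with $\deg\rho<\deg c_1$.
\item Since $\deg a_1-\deg c_1=\deg a-\deg c$, you get $\deg(a_1q)=\deg a<sk$. Hence $(c_1q,a_1q)\in M$, and so $(\rho,a-a_1q)\in M$.
\item Minimality of $\deg c_1$ and injectivity of $(c,a)\mapsto c$ then give $(c,a)=(c_1q,a_1q)$.
\end{enumerate}
Thus $\dim M=sk-\max(\deg c_1,\deg a_1)$, and your Step 3 applies verbatim to $(c_1,a_1)$.
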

	
	\begin{proof}
		We prove \emph{i)}; the proof of \emph{ii)} is entirely analogous, exchanging left and right multiplication throughout.
		
		Set
		\[
		\mathcal D:=\bar b^{-1}\mathcal C\cap \mathcal C.
		\]
		Since left multiplication by \(\bar b\) is a bijection of \(R_F\), we have
		\[
		|\mathcal D|
		=
		|\bar b^{-1}\mathcal C\cap \mathcal C|
		=
		|\mathcal C\cap \bar b\,\mathcal C|
		=
		q^{n(sk-1)}.
		\]
		Moreover, \(\mathcal D\) is a right \(\F_{q^n}\)-subspace of \(R_F\), and therefore
		\[
		\dim_{\F_{q^n}}(\mathcal D)=sk-1.
		\]
		
		Let \(t\) denote the highest degree of a nonzero element of \(\mathcal D\). Since
		\(\mathcal D\subseteq \mathcal C\), one has \(t\le sk-1\). We distinguish two cases.
		
		\smallskip
		\noindent\textbf{Case 1: \(t\le sk-2\).}
		Since \(\dim_{\F_{q^n}}(\mathcal D)=sk-1\), it follows that necessarily \(t=sk-2\) and
		\[
		\mathcal D
		=
		\langle 1+RF(X^n),\,X+RF(X^n),\,\dots,\,X^{sk-2}+RF(X^n)\rangle_{\F_{q^n}}.
		\]
		In particular,
		\[
		1+RF(X^n),\,X+RF(X^n),\,\dots,\,X^{sk-2}+RF(X^n)\in \bar b^{-1}\mathcal C,
		\]
		and therefore
		\[
		\bar b,\ \bar bX,\ \dots,\ \bar bX^{sk-2}\in \mathcal C.
		\]
		Since \(\bar b\in \mathcal C\), we have \(\deg(\bar b)\le sk-1\). Moreover,
		\[
		\deg(\bar bX^{sk-2})\le (sk-1)+(sk-2)=2sk-3<ns,
		\]
		because \(k\le n/2\). Thus no reduction modulo \(F(X^n)\) occurs in the product
		\(\bar bX^{sk-2}\), and from \(\bar bX^{sk-2}\in \mathcal C\) we obtain
		\[
		\deg(\bar b)+(sk-2)\le sk-1.
		\]
		Hence \(\deg(\bar b)\le 1\).
		
		\smallskip
		\noindent\textbf{Case 2: \(t=sk-1\).}
		Consider the \(2\)-dimensional \(\F_{q^n}\)-subspace
		\[
		U:=\langle 1+RF(X^n),\,X+RF(X^n)\rangle_{\F_{q^n}}\subseteq \mathcal C.
		\]
		By Grassmann's formula,
		\[
		\dim_{\F_{q^n}}(\mathcal D\cap U)
		\ge
		\dim_{\F_{q^n}}(\mathcal D)+\dim_{\F_{q^n}}(U)-\dim_{\F_{q^n}}(\mathcal C)
		=
		(sk-1)+2-sk=1.
		\]
		Hence \(\mathcal D\cap U\neq\{0\}\). We claim that \(1+RF(X^n)\notin \mathcal D\). Indeed, if \(1+RF(X^n)\in \mathcal D\), then
		\(\bar b\in \mathcal C\), so \(\deg(\bar b)\le sk-1\). Choose \(\bar d\in \mathcal D\) with
		\(\deg(\bar d)=sk-1\). Since \(\bar b\bar d\in \mathcal C\) and
		\[
		\deg(\bar b\bar d)\le (sk-1)+(sk-1)=2sk-2<ns,
		\]
		no reduction modulo \(F(X^n)\) occurs. Therefore
		\[
		\deg(\bar b)+\deg(\bar d)\le sk-1,
		\]
		which forces \(\deg(\bar b)=0\). Thus \(\bar b\in \F_{q^n}^\times\), and hence
		\(\bar b^{-1}\mathcal C=\mathcal C\), so \(\mathcal D=\mathcal C\), contrary to
		\(\dim_{\F_{q^n}}(\mathcal D)=sk-1\). This proves the claim.
		
		It follows that there exists
		\[
		\bar u=\gamma+\delta X+RF(X^n)\in \mathcal D\cap U
		\]
		with \(\delta\neq 0\). We next show that \(\deg(\bar b)=ns-1\). Assume, to the contrary, that
		\(\deg(\bar b)\le ns-2\). Since \(\bar u\in \mathcal D\), we have \(\bar b\bar u\in \mathcal C\).
		Because \(\deg(\bar u)=1\) and \(\deg(\bar b)\le ns-2\), no reduction modulo \(F(X^n)\) occurs in
		\(\bar b\bar u\), and therefore
		\[
		\deg(\bar b)+1\le sk-1.
		\]
		Hence \(\deg(\bar b)\le sk-2\). Now choose again \(\bar d\in \mathcal D\) with
		\(\deg(\bar d)=sk-1\). Then \(\bar b\bar d\in \mathcal C\), and
		\[
		\deg(\bar b\bar d)\le (sk-2)+(sk-1)=2sk-3<ns,
		\]
		so again no reduction occurs. Thus
		\[
		\deg(\bar b)+\deg(\bar d)\le sk-1,
		\]
		whence \(\deg(\bar b)=0\). As above, this implies \(\bar b\in \F_{q^n}^\times\), hence
		\(\mathcal D=\mathcal C\), a contradiction. Therefore
		\[
		\deg(\bar b)=ns-1.
		\]
		
		Set
		\[
		W:=\langle 1+RF(X^n),\,\bar b\rangle_{\F_{q^n}},
		\qquad
		U_i:=\sum_{j=0}^{i}WX^j
		\qquad (0\le i\le sk-1),
		\]
		where all spans are taken on the right over \(\F_{q^n}\). Since
		\[
		\mathcal C
		=
		\langle 1+RF(X^n),\,X+RF(X^n),\,\ldots,\,X^{sk-1}+RF(X^n)\rangle_{\F_{q^n}},
		\]
		it follows that
		\[
		U_{sk-1}=\mathcal C+\bar b\,\mathcal C.
		\]
		Moreover,
		\[
		\dim_{\F_{q^n}}(\mathcal C+\bar b\,\mathcal C)
		=
		\dim_{\F_{q^n}}(\mathcal C)
		+
		\dim_{\F_{q^n}}(\bar b\,\mathcal C)
		-
		\dim_{\F_{q^n}}(\mathcal C\cap \bar b\,\mathcal C)
		=
		sk+sk-(sk-1)=sk+1.
		\]
		Hence
		\[
		\dim_{\F_{q^n}}(U_{sk-1})=sk+1.
		\]
		
		Since \(\deg(\bar b)=ns-1\), the elements \(1+RF(X^n)\) and \(\bar b\) are linearly independent over \(\F_{q^n}\); therefore
		\[
		\dim_{\F_{q^n}}(W)=2.
		\]
		For convenience, write
		\[
		d_i:=\dim_{\F_{q^n}}(U_i),
		\qquad 0\le i\le sk-1.
		\]
		Thus
		\[
		d_0=2
		\qquad\text{and}\qquad
		d_{sk-1}=sk+1.
		\]
		
		We next show that the sequence \((d_i)\) increases by at least one at each step.
		For every \(0\le i\le sk-2\), one has
		\[
		U_{i+1}=U_i+WX^{i+1}\subseteq U_i+U_iX.
		\]
		On the other hand, since \(WX^jX=WX^{j+1}\subseteq U_{i+1}\) for every \(0\le j\le i\), we also have
		\(U_iX\subseteq U_{i+1}\). Hence
		\[
		U_{i+1}=U_i+U_iX.
		\]
		Now, right multiplication by \(X+RF(X^n)\) is an invertible \(\sigma^{-1}\)-semilinear map, since
		\(X+RF(X^n)\) is invertible in \(R_F\). Consequently,
		\[
		\dim_{\F_{q^n}}(U_iX)=\dim_{\F_{q^n}}(U_i)=d_i.
		\]
		
		We claim that
		\[
		U_i\neq U_iX
		\qquad\text{for every }0\le i\le sk-2.
		\]
		Assume, for contradiction, that \(U_i=U_iX\) for some \(i\). Since \(1+RF(X^n)\in W\subseteq U_i\),
		it follows that
		\[
		X+RF(X^n)=(1+RF(X^n))(X+RF(X^n))\in U_iX=U_i.
		\]
		Iterating, we obtain
		\[
		X^m+RF(X^n)\in U_i
		\qquad\text{for all }m\ge 0.
		\]
		In particular,
		\[
		1+RF(X^n),\ X+RF(X^n),\ \ldots,\ X^{ns-1}+RF(X^n)\in U_i.
		\]
		These \(ns\) elements are linearly independent over \(\F_{q^n}\). Hence
		\[
		d_i=\dim_{\F_{q^n}}(U_i)\ge ns.
		\]
		This is impossible, since \(U_i\subseteq U_{sk-1}\) and, because \(s>1\) and \(k\le n/2\),
		\[
		d_{sk-1}=sk+1<ns.
		\]
		This proves the claim.
		
		Therefore, for every \(0\le i\le sk-2\), one has
		\[
		\dim_{\F_{q^n}}(U_i\cap U_iX)\le d_i-1.
		\]
		Applying Grassmann's formula to \(U_{i+1}=U_i+U_iX\), we obtain
		\[
		d_{i+1}
		=
		\dim_{\F_{q^n}}(U_i+U_iX)
		=
		\dim_{\F_{q^n}}(U_i)+\dim_{\F_{q^n}}(U_iX)-\dim_{\F_{q^n}}(U_i\cap U_iX)
		\ge d_i+1.
		\]
		Thus
		\[
		d_i\ge i+2
		\qquad\text{for all }0\le i\le sk-1.
		\]
		Taking \(i=sk-1\) gives
		\[
		d_{sk-1}\ge sk+1.
		\]
		Since we already know that \(d_{sk-1}=sk+1\), all the above inequalities are in fact equalities.
		In particular,
		\[
		d_1=3.
		\]
		Equivalently,
		\[
		\dim_{\F_{q^n}}(W+WX)=3.
		\]
		
		Now
		\[
		W+WX
		=
		\langle 1+RF(X^n),\,\bar b\rangle_{\F_{q^n}}
		+
		\langle X+RF(X^n),\,\bar bX\rangle_{\F_{q^n}}
		=
		L+\bar bL,
		\]
		where
		\[
		L:=\langle 1+RF(X^n),\,X+RF(X^n)\rangle_{\F_{q^n}}.
		\]
		Since \(\dim_{\F_{q^n}}(L)=2\) and left multiplication by \(\bar b\) is an \(\F_{q^n}\)-linear automorphism of the right vector space \(R_F\), we also have
		\[
		\dim_{\F_{q^n}}(\bar bL)=2.
		\]
		Hence, by Grassmann's formula,
		\[
		\dim_{\F_{q^n}}(L\cap \bar bL)=2+2-3=1.
		\]
		
		Choose a nonzero element of \(L\cap \bar bL\). Then there exist
		\(\alpha,\beta,\alpha',\beta'\in\F_{q^n}\), with \((\alpha,\beta)\neq(0,0)\) and
		\((\alpha',\beta')\neq(0,0)\), such that
		\[
		\alpha+\beta X+RF(X^n)
		=
		\bar b\,(\alpha'+\beta'X+RF(X^n)).
		\]
		Since \(s>1\), every nonzero element of degree at most \(1\) in \(R_F\) is invertible. Therefore
		\[
		\bar b=
		\bigl(\alpha+\beta X+RF(X^n)\bigr)
		\bigl(\alpha'+\beta'X+RF(X^n)\bigr)^{-1}.
		\]
		This proves \emph{(i)}.
	\end{proof}
	
	\section{Equivalence}\label{sec:equivalence}
	
	In this section, we address the equivalence problem for the family
	$
	\mathcal S_k(F)$. By means of the tools of
	Section \ref{sec:structuralresults}, the problem is reduced to the comparison of the corresponding
	defining irreducible polynomials.
	
	Throughout this section, we assume that
	$
	s>1.$
	We keep the notation of Section~\ref{sec:semilineartransf}. In particular, for irreducible
	polynomials \(F(Y),G(Y)\in \F_q[Y]\) of degree \(s\), we choose \(w_F,w_G\in \LL^\times\) such that
	\[
	u_F:=\N_{\LL/\F}(w_F)
	\qquad\text{and}\qquad
	u_G:=\N_{\LL/\F}(w_G)
	\]
	are roots of \(F(Y)\) and \(G(Y)\), respectively, and we set
	\[
	T_F:=T_{w_F,\sigma},
	\qquad
	T_G:=T_{w_G,\sigma}.
	\]
	Then \(T_F\) and \(T_G\) are irreducible \(\sigma\)-semilinear operators on \(V\), and
	\[
	T_F^n=u_F\,\id_V,
	\qquad
	T_G^n=u_G\,\id_V.
	\]
	We first observe that the field of scalars naturally associated with \(T_F\) and \(T_G\)
	is the same in both cases. Indeed, since $u_F$ and $u_G$ are roots in $\F$
	of the irreducible polynomials $F(Y),G(Y) \in \mathbb{F}_q[Y]$ of degree $s$, both elements
	generate $\F$ over $\mathbb{F}_q$. On the other hand,
	\[
	T_F^n = u_F\,\mathrm{id}_V
	\qquad\text{and}\qquad
	T_G^n = u_G\,\mathrm{id}_V.
	\]
	Hence
	\[
	\E_{T_F} := \mathbb{F}_q[T_F^n]
	= \mathbb{F}_q(u_F)\,\mathrm{id}_V
	= \F\,\mathrm{id}_V
	\]
	and similarly
	\[
	\E_{T_G} := \mathbb{F}_q[T_G^n]
	= \mathbb{F}_q(u_G)\,\mathrm{id}_V
	= \F\,\mathrm{id}_V.
	\]
	Therefore
	\[
	\E_{T_F} = \E_{T_G},
	\]
	and consequently
	\[
	\operatorname{End}_{\E_{T_F}}(V)=\operatorname{End}_{\E_{T_G}}(V).
	\]
	In particular, both semilinear transformations are naturally defined over the same field of
	scalars 
	\[\E:=\F \, \id_V.\]

	Moreover, by Theorem~\ref{th:polydescription}, we have the following description of the codes $\cS_k(F)$ and $\cS_k(G)$ in terms of semilinear transformations
	\[
	\mathcal C_k(F):=\Psi_{w_F}(\Sk(F))
	=
	\left\{
	\sum_{i=0}^{sk-1}\alpha_i\cdot T_F^i:\alpha_i\in \K
	\right\}
	\subseteq \End_{\E}(V),
	\]
	and
	\[
	\mathcal C_k(G):=\Psi_{w_G}(\Sk(G))
	=
	\left\{
	\sum_{i=0}^{sk-1}\alpha_i\cdot T_G^i:\alpha_i\in \K
	\right\}
	\subseteq \End_{\E}(V).
	\]
	
	To completely determine the equivalence between $\cS_k(F)$ and $\cS_k(G)$, we have to convert this problem into the equivalence between $\cC_k(F)$ and $\cC_k(G)$, because $\cS_k(F)\subseteq R_F$ and $\cS_k(G)\subseteq R_G$ reside in two different structures, whereas $\cC_k(F)$ and $\cC_k(G)$ both lie in $\End_{\E}(V)$, a setting that is more convenient to work with.
	
	We next show that the equivalence problem can be reformulated from the skew-polynomial model to the semilinear one.
	
	\begin{proposition}\label{prop:matrix-vs-semilinear}
		The following hold:
		\begin{enumerate}[label=\textup{(\roman*)}]
			\item The codes \(\Sk(F)\) and \(\Sk(G)\) are linearly equivalent if and only if
			\(\mathcal C_k(F)\) and \(\mathcal C_k(G)\) are linearly equivalent.
			
			\item The codes \(\Sk(F)\) and \(\Sk(G)\) are \(\Gamma\)-equivalent over \(\F_p\) if and only if
			\(\mathcal C_k(F)\) and \(\mathcal C_k(G)\) are \(\Gamma\)-equivalent over \(\F_p\).
		\end{enumerate}
	\end{proposition}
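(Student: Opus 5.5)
The plan is to reduce both statements to the fact that the two models are related by a ring isomorphism that is compatible with the center. The key tool is Theorem~\ref{th:polydescription}. It gives ring isomorphisms $\Psi_{w_F}:R_F\to\End_{\E}(V)$ and $\Psi_{w_G}:R_G\to\End_{\E}(V)$. Both are $\K$-linear, and they satisfy $\Psi_{w_F}(\E_F)=\E=\Psi_{w_G}(\E_G)$. Equivalence of $\Sk(F)\subseteq R_F\cong M_n(\E_F)$ and $\Sk(G)\subseteq R_G\cong M_n(\E_G)$ is only meaningful after fixing identifications of these matrix algebras with a common $M_n(\F_{q^s})$. So the first step is to fix such identifications through $\Psi$. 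Let $\theta_F:=\Psi_{w_F}|_{\E_F}:\E_F\to\E$, and choose an $\E$-basis of $V$, so that $\End_{\E}(V)\cong M_n(\E)$. Then $\Psi_{w_F}$ is a $\theta_F$-semilinear ring isomorphism $R_F\to M_n(\E)$. By Skolem--Noether, any $\E_F$-algebra identification $\mathcal M_F$ as in \eqref{eq:artin} differs from it by an inner automorphism after transporting scalars along $\theta_F$. The same holds for $G$.

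For (i), suppose $A\,\Sk(F)\,B=\Sk(G)$ with $A,B$ invertible in the common matrix model $M_n(\F_{q^s})$, where $\Sk(F)$ and $\Sk(G)$ are viewed via $\mathcal M_F$ and $\mathcal M_G$. Apply the identifications above. The code $\mathcal M_F(\Sk(F))$ corresponds to $N_F\,\mathcal C_k(F)^{\rho_F}N_F^{-1}$, where $N_F$ is an invertible matrix and $\rho_F$ is the field isomorphism that the chosen identification of $\E_F$ with $\F_{q^s}$ induces on $\E$. Conjugation is itself a linear equivalence, so the question is reduced to equivalence of $\mathcal C_k(F)$ and $\mathcal C_k(G)$ up to the twists $\rho_F,\rho_G$. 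The remaining point is that these twists cancel. Both $\mathcal M_F$ and $\mathcal M_G$ identify the center with $\F_{q^s}$ so that the resulting isomorphism to $\E$ is the same; for instance, the identification can be fixed as the one transported through $\Psi$. Then the induced entrywise automorphisms agree, and $A\,\mathcal C_k(F)\,B=\mathcal C_k(G)$ with the transported $A,B$. The converse follows by reading the same chain backwards.

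For (ii), the argument is identical, except that an extra field automorphism $\rho\in\Aut(\F_{q^s})$ appears. I will check that $\Psi_w$ intertwines entrywise $\rho$ with its counterpart on $\End_{\E}(V)$, up to conjugation by a fixed invertible matrix. Conjugating $\rho$ by the fixed isomorphism $\theta_G\theta_F^{-1}$ gives another automorphism of $\F_{q^s}$. Any leftover conjugating matrices $N$ and $N^{\rho}$ are absorbed into $A$ and $B$, because $(N X N^{-1})^{\rho}=N^{\rho}X^{\rho}(N^{\rho})^{-1}$.

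I expect the main obstacle to be the bookkeeping of the centers. Since $\E_F$ and $\E_G$ are different fields, abstractly isomorphic to $\F_{q^s}$, "linear equivalence" implicitly requires a choice of identification of each with $\F_{q^s}$. I must ensure that the $\E$-linearity of the maps matches $\E_F$- or $\E_G$-linearity through $\Psi$. I would first make this explicit: define the matrix model of $\Sk(F)$ as $\Psi_{w_F}(\Sk(F))$ in an $\E$-basis, and note that, by Skolem--Noether, any other admissible model is linearly equivalent to it. Granting this, both (i) and (ii) become tautological transports of structure.
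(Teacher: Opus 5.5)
Your proposal is correct and follows the paper's argument. You transport through $\Psi_{w_H}$, use Skolem--Noether to get $\mathcal M(\mathcal C_k(H))=P_H\,\mathcal M_H(\mathcal S_k(H))\,P_H^{-1}$, and absorb the conjugating matrices into the equivalence, with $(P_F^{-1})^{\rho}$ and $P_F^{\rho}$ in the semilinear case. Your explicit choice of identifying $\E_F$ and $\E_G$ with $\F_{q^s}$ compatibly through $\Psi$ is the same convention the paper uses implicitly when it asserts that $\Psi_{w_H}$, $\mathcal M_H$ and $\mathcal M$ are all $\F_{q^s}$-algebra isomorphisms.
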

	
	\begin{proof}
		Let
		\[
		\mathcal{M}_F \colon R_F \stackrel{\sim}{\longrightarrow} M_n(\F_{q^s}),
		\qquad
		\mathcal{M}_G \colon R_G \stackrel{\sim}{\longrightarrow} M_n(\F_{q^s})
		\]
		be \(\F_{q^s}\)-algebra isomorphisms. Fix an \(\E\)-basis of \(V\), and let
		\[
		\mathcal{M}\colon \End_{\E}(V)\stackrel{\sim}{\longrightarrow} M_n(\F_{q^s})
		\]
		be the corresponding matrix representation. For each \(H\in\{F,G\}\), consider the commutative diagram
		\[
		\begin{tikzcd}
			R_H \arrow[r,"\Psi_{w_H}"] \arrow[d,"\mathcal{M}_H"'] &
			\End_{\E}(V) \arrow[d,"\mathcal{M}"] \\
			M_n(\F_{q^s}) \arrow[r,"\Theta_H"'] & M_n(\F_{q^s}),
		\end{tikzcd}
		\]
		where
		\[
		\Theta_H:=\mathcal{M}\circ \Psi_{w_H}\circ \mathcal{M}_H^{-1}.
		\]
		Since \(\Psi_{w_H}\), \(\mathcal M_H\), and \(\mathcal M\) are \(\F_{q^s}\)-algebra isomorphisms,
		\(\Theta_H\) is an \(\F_{q^s}\)-algebra automorphism of \(M_n(\F_{q^s})\). Hence, by the
		Skolem--Noether theorem, there exists \(P_H\in \GL_n(\F_{q^s})\) such that
		\[
		\Theta_H(A)=P_HAP_H^{-1}
		\qquad\text{for all }A\in M_n(\F_{q^s}).
		\]
		Therefore,
		\begin{equation} \label{eq:equivalencedef}
			\mathcal M(\C_k(H))
			=
			P_H\,\mathcal M_H(\Sk(H))\,P_H^{-1}.
		\end{equation}
		
		Suppose first that \(\Sk(F)\) and \(\Sk(G)\) are linearly equivalent. Then there exist
		\(U,V\in \GL_n(\F_{q^s})\) such that
		\[
		\mathcal M_G(\Sk(G))=U\,\mathcal M_F(\Sk(F))\,V.
		\]
		It follows that
		\[
		\mathcal M(\mathcal C_k(G))
		=
		(P_GUP_F^{-1})\,\mathcal M(\mathcal C_k(F))\,(P_FVP_G^{-1}),
		\]
		so that \(\mathcal C_k(F)\) and \(\mathcal C_k(G)\) are linearly equivalent. The converse is obtained in the same way, and this proves (i).
		
		For (ii), suppose that \(\Sk(F)\) and \(\Sk(G)\) are \(\Gamma\)-equivalent over \(\F_p\). Then there exist
		\(U,V\in \GL_n(\F_{q^s})\) and \(\rho\in \Aut_{\F_p}(\F_{q^s})\) such that
		\[
		\mathcal M_G(\Sk(G))
		=
		U\,\mathcal M_F(\Sk(F))^\rho\,V.
		\]
		Using \eqref{eq:equivalencedef}, with $H=F$, we obtain
		\[
		\mathcal M_F(\Sk(F))^\rho
		=
		(P_F^{-1})^\rho\,\mathcal M(\mathcal C_k(F))^\rho\,P_F^\rho.
		\]
		Substituting this into the previous identity and using again \eqref{eq:equivalencedef} with $H=G$, yields
		\[
		\mathcal M(\mathcal C_k(G))
		=
		\bigl(P_GU(P_F^{-1})^\rho\bigr)\,
		\mathcal M(\mathcal C_k(F))^\rho\,
		\bigl(P_F^\rho V P_G^{-1}\bigr).
		\]
		Therefore \(\mathcal C_k(F)\) and \(\mathcal C_k(G)\) are \(\Gamma\)-equivalent over \(\F_p\). The converse is similar.
	\end{proof}
	
	\subsection{Linear equivalence}
	Our goal in this part is to prove the following result on the linear equivalence between $\Sk(F)$ and $\Sk(G)$. 	For a monic polynomial \(P(Y)\in\F_q[Y]\) of degree \(s\) with \(P(0)\neq0\), we denote by
	\[
	\widehat P(Y):=P(0)^{-1}Y^sP(Y^{-1})
	\]
	its monic reciprocal polynomial. Note that, if \(u\) is a root of \(P\), then \(u^{-1}\) is a root of
	\(\widehat P\).
	\begin{theorem}\label{thm:comparison-skew-model}
		Let \(1\le k\le n-1\). The codes
		\[
		\Sk(F)\subseteq R_F
		\qquad\text{and}\qquad
		\Sk(G)\subseteq R_G
		\]
		are linearly equivalent if and only if there exists \(\lambda\in \F_q^\times\) such that
		\[
		F(Y)=\lambda^sG(\lambda^{-1}Y),
		\]
		or, only in the case \(n=2\), such that
		\[
		F(Y)=\lambda^s\widehat G(\lambda^{-1}Y).
		\]
	\end{theorem}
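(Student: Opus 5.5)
We will work in the semilinear model: by Proposition~\ref{prop:matrix-vs-semilinear}(i) it suffices to decide when $\mathcal C_k(F)$ and $\mathcal C_k(G)$ are linearly equivalent inside $\End_{\E}(V)$. We also use duality (transpose/adjoint, which maps $\mathcal S_k(F)$ to a code of the same shape with $k$ replaced by $n-k$, cf.\ \cite{gomez2025adjoint}) to reduce to the case $k\le n/2$, where Theorem~\ref{thm:b-linear} is available.

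\textbf{Sufficiency.} If $F(Y)=\lambda^sG(\lambda^{-1}Y)$ with $\lambda\in\F_q^\times$, then $u_F=\lambda u_G$ is a root of $F$. Pick $\mu\in\K$ with $\N_{\K/\F_q}(\mu)=\lambda$. The operator $\mu T_G$ is $\sigma$-semilinear, irreducible, and satisfies $(\mu T_G)^n=\N(\mu)T_G^n=\lambda u_G\,\id_V$. By Theorem~\ref{th:classificationsirreducible}(b), $\mu T_G$ is conjugate to $T_F$ via some $\phi\in\GL_{\K}(V)$, and $\phi$ is $\E$-linear. Moreover, $(\mu T_G)^i\in\K T_G^i$, so the $\K$-span of $\{(\mu T_G)^i\}_{i<sk}$ equals $\mathcal C_k(G)$. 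Hence $\phi\,\mathcal C_k(G)\,\phi^{-1}=\mathcal C_k(F)$.

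For $n=2$ and the reciprocal case we use the anti-automorphism of $R$ induced by $X\mapsto X^{-1}$ together with $\alpha\mapsto\alpha$. For $n=2$ we have $X^{-1}=u^{-1}X$ up to central factors, so $\alpha X\mapsto X^{-1}\alpha$ sends $R_G$ to $R_{\widehat G}$. After multiplying by a suitable power of $X$, this carries the span of $1,\dots,X^{sk-1}$ to the span of $X^{-(sk-1)},\dots,1$, which is a translate $X^{-(sk-1)}\mathcal S_k(\widehat G)$. Composing with transposition then yields an equivalence; we will check that for $n=2$ this is genuinely linear over $\E$.

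\textbf{Necessity.} Suppose $A\,\mathcal C_k(F)\,B=\mathcal C_k(G)$. Comparing idealisers via Proposition~\ref{prop:invariantsSNsk}, we get $A\K A^{-1}=\K$ and $B^{-1}\K B=\K$. Thus $A$ and $B$ normalise $\K$, so each is $\sigma^t$-semilinear for some $t$. By Proposition~\ref{prop:semilinear-centralizer}, $A\in\mathcal Z_G T_G^{t}$ with invertible coefficient, and similarly for $B$; we pass back and forth between $R_F$ and $R_G$ via $\Psi$. Replace the pair by $(A,B)\mapsto (AB,\,B^{-1}\,\cdot\,B)$, i.e.\ consider $C:=B^{-1}\mathcal C_k(F)B$, which satisfies $\bar b\,C=\mathcal C_k(G)$ with $\bar b=AB$.

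The key step is to exploit that $\mathcal C_k(F)$ is stable under right multiplication by $T_F^{\pm1}$ up to codimension one:
\[
|\mathcal C_k(F)\cap\mathcal C_k(F)T_F|=q^{n(sk-1)}.
\]
Transporting this, $\mathcal C_k(G)$ meets $\mathcal C_k(G)\,c$ in codimension one, where $c=B^{-1}T_FB$. Theorem~\ref{thm:b-linear}(ii) then forces $c$ to be a quotient of degree-$\le1$ elements. At the same time, Lemma~\ref{lm:shapeconjugatesalphax} shows that $c$ has the form $z(T_G^n)T_G$, i.e.\ it is $\sigma$-semilinear. Combining the two, and using that degree-$\le1$ elements which are $\sigma$-homogeneous must be of the form $\alpha X$ or constants, we aim to conclude $c=\alpha T_G$ when $n>2$. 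Taking $n$-th powers gives
\[
u_F\sim B^{-1}T_F^nB=(\alpha T_G)^n=\N(\alpha)u_G,
\]
so $u_F=\lambda u_G$ up to Galois conjugation over $\F_q$, which is exactly $F(Y)=\lambda^sG(\lambda^{-1}Y)$.

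When $n=2$, $\sigma^{-1}=\sigma$, so the quotient $(\alpha'+\beta'X)^{-1}(\alpha+\beta X)$ may instead equal $\alpha X^{-1}$. This produces $u_F=\lambda u_G^{-1}$, which is the reciprocal case.

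The main obstacle is this last step: showing that an element that is simultaneously $\sigma$-semilinear and a ratio of linear skew polynomials must be a scalar multiple of $X$, or of $X^{-1}$ when $n=2$. We would first try degree and coefficient comparison in $R_G$, after clearing the denominator as $(\alpha'+\beta'X)\,z(X^n)X=\alpha+\beta X$ modulo $G(X^n)$.
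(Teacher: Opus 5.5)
You have the paper's argument for necessity when $k\le n/2$, worked on the right instead of the left: you use $c=B^{-1}T_FB$ with Theorem~\ref{thm:b-linear}(ii), where the paper uses $MT_FM^{-1}$ with part (i). Your non-reciprocal sufficiency argument is also the paper's. The step you call the main obstacle is Proposition~\ref{prop:semilinear-collapse}, and your plan does settle it:
\begin{itemize}
\item Since $c$ is $\sigma$-semilinear, Proposition~\ref{prop:semilinear-centralizer} gives $c=z(X^n)X$ in $R_G$ with $\deg z\le s-1$. Lemma~\ref{lm:shapeconjugatesalphax} does not apply directly, since it works inside a single $R_F$.
\item For $n\ge 3$, comparing exponents modulo $n$ in $(\alpha'+\beta'X)z(X^n)X=\alpha+\beta X$ gives $\alpha=\beta'=0$ and $z$ constant.
\item For $n=2$, the term $\beta'z^\sigma(X^2)X^2$ wraps around modulo $G(X^2)$ and produces the $X^{-1}$ alternative.
\end{itemize}

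\textbf{Gap: the reciprocal case of sufficiency for $n=2$.} An anti-automorphism followed by a transposition is not a linear equivalence, because transposition is not in $\Gamma$. What your construction actually shows is that $\mathcal S_k(\widehat G)$ is linearly equivalent to the \emph{transpose} of $\mathcal S_k(G)$.

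Nothing in your construction uses $n=2$. For every $n$, the map $\alpha X^i\mapsto X^{-i}\alpha$ is an anti-automorphism of $\K[X^{\pm1};\sigma]$. It sends $G(X^n)$ to a unit multiple of $\widehat G(X^n)$. It induces the identity of $\F$ on the centres once these are identified with $\F$ via the roots $u_G$ and $u_G^{-1}$. It carries $\mathcal S_k(G)$ onto $\mathcal S_k(\widehat G)X^{-(sk-1)}$. If this gave a linear equivalence, it would give one for all $n$. That contradicts the statement itself, e.g.\ for $q=2$, $n=s=3$, $G=Y^3+Y+1$, $\widehat G=Y^3+Y^2+1$. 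So ``checking $\E$-linearity'' is beside the point.

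The missing idea is that $\sigma^{-1}=\sigma$ when $n=2$. Hence $\alpha X^i\mapsto\alpha X^{-i}$, \emph{without} reversing products, is a ring automorphism. It induces an $\F$-linear isomorphism $R_G\to R_{\widehat G}$, which is inner by Skolem--Noether, mapping $\mathcal S_1(G)$ onto $\mathcal S_1(\widehat G)X^{-(s-1)}$. The non-reciprocal case then takes you from $\widehat G$ to $F$. This is the paper's argument in skew-polynomial form: for $n=2$, $\delta T_G^{-1}$ is again an irreducible $\sigma$-semilinear operator with square $u_F\,\id_V$. It is therefore $\E$-linearly conjugate to $T_F$, and this conjugation carries $\mathcal C_1(F)$ onto $\mathcal C_1(G)T_G^{-(s-1)}$. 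Alternatively, you can complete your transpose route with an $n=2$ fact: transposition in $M_2(\E)$ is conjugate to the canonical involution $x\mapsto\mathrm{Trd}(x)-x$, and that involution maps $\mathcal S_1(G)$ onto itself.

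\textbf{Smaller slip: the reduction for $k>n/2$.} The transpose/adjoint keeps $k$; as just seen, it keeps dimension $nsk$ and replaces $G$ by $\widehat G$. So it cannot turn $k$ into $n-k$. What you need is the Delsarte dual with respect to the trace form. It preserves linear equivalence, and by \cite[Proposition~5.9]{gomez2025adjoint} the dual of $\mathcal S_k(F)$ is equivalent to $\mathcal S_{n,s,n-k}(\widehat F)$. The case $n-k<n/2$ then gives $\widehat F(Y)=\mu^s\widehat G(\mu^{-1}Y)$. You must undo the reciprocal to reach $F(Y)=\lambda^sG(\lambda^{-1}Y)$ with $\lambda=\mu^{-1}$. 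Here $n>2$, so no reciprocal alternative survives.
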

	
	By Proposition~\ref{prop:matrix-vs-semilinear}, in order to classify the linear equivalence classes
	of \(\Sk(F)\), it is enough to study the linear equivalence of the codes \(\mathcal C_k(F)\) and
	\(\mathcal C_k(G)\).
	
	We first determine the left and right idealisers of \(\mathcal C_k(F)\) and \(\mathcal C_k(G)\).
	
	\begin{lemma}\label{lem:semilinear-idealizers}
		For \(H\in\{F,G\}\), one has
		\[
		\lid(\C_k(H))=\rid(\C_k(H))=\{\alpha\,\id_V:\alpha\in \K\}.
		\]
	\end{lemma}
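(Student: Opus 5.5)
The plan is to transport Proposition~\ref{prop:invariantsSNsk} along the ring isomorphism \(\Psi_{w_H}\) of Theorem~\ref{th:polydescription}. Fix \(H\in\{F,G\}\) and write \(\Psi:=\Psi_{w_H}\colon R_H\to \End_{\E}(V)\). By construction \(\mathcal C_k(H)=\Psi(\Sk(H))\), and \(\Psi\) is a ring isomorphism. Hence for \(A\in\End_{\E}(V)\) we have \(A\,\mathcal C_k(H)\subseteq \mathcal C_k(H)\) if and only if \(\Psi^{-1}(A)\,\Sk(H)\subseteq \Sk(H)\). In other words,
\[
\lid(\mathcal C_k(H))=\Psi\bigl(\lid(\Sk(H))\bigr),
\]
and the same argument with right multiplication gives \(\rid(\mathcal C_k(H))=\Psi(\rid(\Sk(H)))\). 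There is a small point to check: the idealisers are defined inside the full matrix algebra, \(M_n(\F_{q^s})\) on the code side and \(\End_{\E}(V)\) on the semilinear side. This causes no trouble because \(\Psi\) is a bijection between the ambient algebras \(R_H\) and \(\End_{\E}(V)\), and Proposition~\ref{prop:invariantsSNsk} computes the idealisers inside \(R_H\cong M_n(\E_H)\).

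The second step is to apply Proposition~\ref{prop:invariantsSNsk}, which is valid under the standing assumptions \(s>1\) and \(1\le k\le n-1\). It gives
\[
\lid(\Sk(H))=\rid(\Sk(H))=\{\alpha+RH(X^n):\alpha\in\K\}.
\]
Finally, by the definition of \(\Psi\) in \eqref{eq:fromskewtosemilinear}, a constant class is sent to \(\Psi(\alpha+RH(X^n))=\alpha\cdot T_H^0=\alpha\,\id_V\), where \(\alpha\) acts through the \(\K\)-structure \eqref{eq:vectorspaceconstruction}. This yields the claimed equality
\[
\lid(\mathcal C_k(H))=\rid(\mathcal C_k(H))=\{\alpha\,\id_V:\alpha\in\K\}.
\]

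The only step that needs care is the first, namely that the idealisers are intrinsic to the algebra in which the code sits. They depend only on the ring structure, so they are preserved by any ring isomorphism, including \(\Psi\) and the Skolem--Noether conjugations that appear in Proposition~\ref{prop:matrix-vs-semilinear}. I therefore expect no real obstacle, since everything reduces to Proposition~\ref{prop:invariantsSNsk}.
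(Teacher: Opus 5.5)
Your proposal is correct and matches the paper's proof: you transport the idealisers along the ring isomorphism \(\Psi_{w_H}\), apply Proposition~\ref{prop:invariantsSNsk}, and use \(\Psi_{w_H}(\alpha+RH(X^n))=\alpha\,\id_V\). Your remark that the idealisers are computed inside the ambient algebras \(R_H\) and \(\End_{\E}(V)\), which \(\Psi_{w_H}\) identifies, makes explicit a point the paper leaves implicit.
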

	
	\begin{proof}
		Since \(\C_k(H)=\Psi_{w_H}(\Sk(H))\) and
		\(\Psi_{w_H}\colon R_H\to \End_{\E}(V)\) is a ring isomorphism, it preserves left and right
		idealisers. Hence
		\[
		\lid(\C_k(H))=\Psi_{w_H}\bigl(\lid(\Sk(H))\bigr),
		\qquad
		\rid(\C_k(H))=\Psi_{w_H}\bigl(\rid(\Sk(H))\bigr).
		\]
		By Proposition~\ref{prop:invariantsSNsk},
		\[
		\lid(\Sk(H))=\rid(\Sk(H))
		=
		\{\alpha+RH(X^n):\alpha\in \K\}.
		\]
		Applying \(\Psi_{w_H}\), and using
		\[
		\Psi_{w_H}(\alpha+RH(X^n))=\alpha\,\id_V,
		\]
		we obtain the claim.
	\end{proof}
	
	\begin{lemma}\label{lem:equiv-centralizer}
		Assume that there exist \(M,N\in \GL_{\E}(V)\) such that
		\[
		M\mathcal C_k(F) N=\mathcal C_k(G).
		\]
		Then
		\[
		M,N\in \Cen_{\End_{\E}(V)}(\K).
		\]
	\end{lemma}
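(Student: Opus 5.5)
The plan is the standard idealiser-transport argument. Suppose \(M\mathcal C_k(F)N=\mathcal C_k(G)\) with \(M,N\in\GL_{\E}(V)\). First we show that \(M\) conjugates the left idealiser of \(\mathcal C_k(F)\) onto that of \(\mathcal C_k(G)\), and \(N^{-1}\) does the same for the right idealisers. Indeed, if \(A\in\lid(\mathcal C_k(F))\), then
\[
(MAM^{-1})\,\mathcal C_k(G)=MA\,\mathcal C_k(F)N\subseteq M\mathcal C_k(F)N=\mathcal C_k(G).
\]
Hence \(M\lid(\mathcal C_k(F))M^{-1}\subseteq\lid(\mathcal C_k(G))\). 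Equality holds by symmetry, using \(M^{-1}\mathcal C_k(G)N^{-1}=\mathcal C_k(F)\). Analogously, \(N^{-1}\rid(\mathcal C_k(F))N=\rid(\mathcal C_k(G))\).

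Next we invoke Lemma~\ref{lem:semilinear-idealizers}. Both left idealisers, and both right idealisers, equal the same set \(\K\,\id_V=\{\alpha\,\id_V:\alpha\in\K\}\subseteq\End_{\E}(V)\). This uses the fact, noted above, that \(\E_{T_F}=\E_{T_G}=\E\), so both codes live in the same ring \(\End_{\E}(V)\) and the \(\K\)-structure on \(V\) is the fixed one from \eqref{eq:vectorspaceconstruction}. Therefore
\[
M\,\K\,M^{-1}=\K
\qquad\text{and}\qquad
N\,\K\,N^{-1}=\K
\]
as subsets of \(\End_{\E}(V)\). It follows that \(M\K=\K M\) and \(N\K=\K N\).

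The remaining step is to identify this normaliser condition with the centraliser \(\Cen_{\End_{\E}(V)}(\K)\). In Propositions~\ref{prop:descriptioncentraliser} and~\ref{prop:semilinear-centralizer}, the centraliser is defined exactly by the condition \(\K\bar g=\bar g\K\), transported via \(\Psi_w\). So \(M,N\in\Cen_{\End_{\E}(V)}(\K)\) follows directly.

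The only point needing care is the bookkeeping that conjugation by \(M\) maps \(\K\,\id_V\) onto itself as a set. This does not mean pointwise commutation. One might also want to check that \(\alpha\mapsto M\alpha M^{-1}\) is a field automorphism of \(\K\), hence some \(\sigma^t\). That is not needed for the statement as phrased, but it is consistent with the description of \(\Cen\) as \(\bigoplus_t\mathcal Z T^t\) in Proposition~\ref{prop:semilinear-centralizer}, where \(M\) is \(\sigma^t\)-semilinear.

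I expect no serious obstacle. The potential subtlety is ensuring that the idealisers are computed in the common ambient ring \(\End_{\E}(V)\). This is guaranteed by \(\E_{T_F}=\E_{T_G}\) and by \(\Psi_{w_H}(\alpha+RH(X^n))=\alpha\,\id_V\) for both \(H=F\) and \(H=G\).
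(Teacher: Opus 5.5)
Your proposal is correct and follows essentially the paper's route. You transport the idealisers through $M$ and $N$, use Lemma~\ref{lem:semilinear-idealizers} to see that all four equal $\{\alpha\,\id_V:\alpha\in\K\}$, and read off the set-wise condition $M\K=\K M$, $N\K=\K N$ that defines the centraliser. Your form $N^{-1}\rid(\mathcal C_k(F))N=\rid(\mathcal C_k(G))$ is in fact the correctly oriented version of the relation the paper writes as $N\rid(\mathcal C_k(F))=\rid(\mathcal C_k(G))N$, which is harmless there only because the two idealisers coincide.
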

	
	\begin{proof}
		From
		\[
		M\mathcal C_k(F) N=\mathcal C_k(G),
		\]
		it follows that
		\[
		M\lid(\mathcal C_k(F))=\lid(\mathcal C_k(G))M,
		\qquad
		N\rid(\mathcal C_k(F))=\rid(\mathcal C_k(G))N.
		\]
		By Lemma~\ref{lem:semilinear-idealizers},
		\[
		\lid(\mathcal C_k(F))=\lid(\mathcal C_k(G))=\rid(\mathcal C_k(F))=\rid(\mathcal C_k(G))
		=\{\alpha\,\id_V:\alpha\in \K\}.
		\]
		Hence, we get
		\[
		M,N\in \Cen_{\End_{\E}(V)}(\K). \qedhere
		\]
	\end{proof}
	
	The following proposition is the semilinear refinement of Theorem~\ref{thm:b-linear} that drives the
	proof of the comparison theorem.
	
	\begin{proposition}\label{prop:semilinear-collapse}
		Assume that \(1\le k\le n/2\). Let \(H\in\{F,G\}\). Let \(S\in \End_{\E}(V)\) be an invertible \(\sigma\)-semilinear map with
		respect to the \(\K\)-vector space structure on \(V\). Assume that
		\[
		|\C_k(H)\cap S\C_k(H)|=q^{n(sk-1)}.
		\]
		Then
		\[
		S\in
		\begin{cases}
			\K^\times T_H, & \text{if } n>2,\\[2mm]
			\K^\times T_H\ \cup\ \K^\times T_H^{-1}, & \text{if } n=2.
		\end{cases}
		\]
		Analogously, if
		\[
		|\C_k(H)\cap \C_k(H)S|=q^{n(sk-1)},
		\]
		then again
		\[
		S\in
		\begin{cases}
			\K^\times T_H, & \text{if } n>2,\\[2mm]
			\K^\times T_H\ \cup\ \K^\times T_H^{-1}, & \text{if } n=2.
		\end{cases}
		\]
	\end{proposition}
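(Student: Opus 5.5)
Pull the problem back to the skew-polynomial model via \(\Psi_{w_H}\), apply Theorem~\ref{thm:b-linear}, and then use the semilinearity of \(S\) to pin down the shape of \(\bar b:=\Psi_{w_H}^{-1}(S)\in R_H^\times\). Since \(\Psi_{w_H}\) is a ring isomorphism carrying \(\Sk(H)\) onto \(\C_k(H)\), the hypothesis gives \(|\Sk(H)\cap \bar b\,\Sk(H)|=q^{n(sk-1)}\). By Proposition~\ref{prop:semilinear-centralizer}, \(S\) being \(\sigma\)-semilinear means \(S\in\mathcal Z T_H\). Pulling back, \(\bar b=z(X^n)X+RH(X^n)\) for some \(z(Y)\in\K[Y]\), and in particular \(\bar b\in\Cen_{R_H}(\K)^\times\). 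Theorem~\ref{thm:b-linear}(i) then gives
\[
\bar b\,(\alpha'+\beta'X)=\alpha+\beta X \pmod{RH(X^n)},
\]
with \((\alpha,\beta),(\alpha',\beta')\neq(0,0)\).

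Next, compare semilinear types. In the decomposition \(R_H=\bigoplus_{t=0}^{n-1}\mathcal Z_HX^t\) (the preimage of Proposition~\ref{prop:semilinear-centralizer}), the element \(\bar b\) lies in the \(t=1\) component. Hence \(\bar b\alpha'\) lies in the \(t=1\) component and \(\bar b\beta'X\) in the \(t=2\) component, while \(\alpha\) and \(\beta X\) lie in the \(t=0\) and \(t=1\) components.

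\textbf{Case \(n>2\).} The classes \(0,1,2\) are distinct modulo \(n\). Comparing the \(t=0\) component gives \(\alpha=0\). Comparing the \(t=2\) component gives \(\bar b\beta'X=0\), so \(\beta'=0\) because \(\bar b\) and \(X\) are invertible. Then \(\alpha'\neq0\) and \(\bar b\alpha'=\beta X\), so \(\bar b=\beta X\alpha'^{-1}=\beta\sigma(\alpha'^{-1})X\in\K^\times X\). Applying \(\Psi_{w_H}\) yields \(S\in\K^\times T_H\).

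\textbf{Case \(n=2\).} Now \(X^2\equiv X^0\), so the \(t=2\) component coincides with the \(t=0\) component. Comparing the odd parts gives \(\bar b\alpha'=\beta X\), and comparing the even parts gives \(\bar b\beta'X=\alpha\). Since \(\bar b\) is invertible, \(\alpha'=0\) if and only if \(\beta=0\), and \(\beta'=0\) if and only if \(\alpha=0\).
\begin{itemize}
\item If \(\alpha'\neq0\), then \(\beta\neq0\) and \(\bar b=\beta\sigma(\alpha'^{-1})X\in\K^\times X\).
\item Otherwise \(\alpha'=0\), so \(\beta'\neq0\) and \(\alpha\neq0\). Then \(\bar b=\alpha(\beta'X)^{-1}=\alpha X^{-1}\beta'^{-1}\), which lies in \(\K^\times X^{-1}\) since \(X^{-1}\) normalises \(\K\).
\end{itemize}
Applying \(\Psi_{w_H}\) gives \(S\in\K^\times T_H\cup\K^\times T_H^{-1}\).

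The right-hand version is identical: use Theorem~\ref{thm:b-linear}(ii), which gives \((\alpha'+\beta'X)\bar b=\alpha+\beta X\), and make the same component comparison.

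The main point needing care is the well-definedness of the degree/type decomposition modulo \(RH(X^n)\): the components \(\mathcal Z_HX^t\), with \(\mathcal Z_H=\{z(X^n)+RH(X^n)\}\), must form a direct sum indexed by \(t\bmod n\). This follows from the uniqueness statement in Proposition~\ref{prop:semilinear-centralizer}, or equivalently from the fact that the \(\sigma^t\)-semilinear maps for distinct \(t\bmod n\) intersect trivially. One also uses that multiplication respects the grading, \(\mathcal Z_HX^t\cdot\mathcal Z_HX^r\subseteq\mathcal Z_HX^{t+r}\), which holds because \(X^n\) is central.
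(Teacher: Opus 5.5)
Your argument is correct and is essentially the paper's proof: the paper also pulls back to $\bar b=\Psi_{w_H}^{-1}(S)$, applies Theorem~\ref{thm:b-linear}, and compares the components of $\alpha\,\mathrm{id}_V+\beta T_H=\sigma(\alpha')S+\sigma(\beta')ST_H$ by semilinear type via Proposition~\ref{prop:semilinear-centralizer}, with the same case split when $n=2$. The only difference is cosmetic: you do the type comparison inside $R_H$ using the grading by degree modulo $n$, which is exactly the preimage of that decomposition under $\Psi_{w_H}$.
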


	\begin{proof}
		We prove the first assertion, the second one being analogous.
		
		Set
		\[
		\overline{b}:=\Psi_{w_H}^{-1}(S)\in R_H^\times.
		\]
		Since \(\Psi_{w_H}\) is an algebra isomorphism,
		\[
		|\C_k(H)\cap S\C_k(H)|
		=
		|\Sk(H)\cap \overline{b}\,\Sk(H)|.
		\]
		By Theorem~\ref{thm:b-linear}, there exist \(\alpha,\beta,\alpha',\beta'\in \K\), with
		\((\alpha,\beta)\neq(0,0)\) and \((\alpha',\beta')\neq(0,0)\), such that
		\[
		\overline{b}
		=
		\bigl(\alpha+\beta X+RH(X^n)\bigr)
		\bigl(\alpha'+\beta'X+RH(X^n)\bigr)^{-1}.
		\]
		Applying \(\Psi_{w_H}\), we obtain
		\[
		S=(\alpha\,\id_V+\beta T_H)(\alpha'\,\id_V+\beta'T_H)^{-1},
		\]
		hence
		\[
		\alpha\,\id_V+\beta T_H
		=
		S(\alpha'\,\id_V+\beta'T_H).
		\]
		Since \(S\) is \(\sigma\)-semilinear, this becomes
		\begin{equation}\label{eq:id+T_H=S+ST_H}
			\alpha\,\id_V+\beta T_H
			=
			\sigma(\alpha')S+\sigma(\beta')ST_H.	
		\end{equation}
		Now \(\alpha\,\id_V\) is \(\K\)-linear, \(\beta T_H\) and \(\sigma(\alpha')S\) are
		\(\sigma\)-semilinear, while \(\sigma(\beta')ST_H\) is \(\sigma^2\)-semilinear. 
		
		When \(n>2\), the
		semilinear types \(1,\sigma,\sigma^2\) are pairwise distinct. By
		Proposition~\ref{prop:semilinear-centralizer}, the decomposition of \(\End_{\E}(V)\) according to
		the semilinear type is direct. Therefore, we obtain
		\[
		\alpha=0,\qquad \beta'=0,\qquad \beta T_H=\sigma(\alpha')S.
		\]
		As \(S\) is invertible, we must have \(\beta\neq 0\) and \(\alpha'\neq 0\). Hence
		\[
		S=\delta T_H
		\qquad\text{with}\qquad
		\delta:=\beta\,\sigma(\alpha')^{-1}\in \K^\times.
		\]
		When \(n=2\), one has \(\sigma^2=\id_{\K}\). Hence the semilinear types occurring in
		\eqref{eq:id+T_H=S+ST_H} are only \(1\) and \(\sigma\). Therefore, we obtain
		\[
		\alpha\,\id_V=\sigma(\beta')ST_H,
		\qquad
		\beta T_H=\sigma(\alpha')S.
		\]
		If \(\alpha'\neq 0\), then the second equality forces \(\beta\neq 0\), since \(S\) is invertible, and
		therefore
		\[
		S=\beta\,\sigma(\alpha')^{-1}T_H\in \K^\times T_H.
		\]
		Assume now that \(\alpha'=0\). Then \(\beta'\neq 0\), and the second equality gives
		\(\beta=0\). Since \(S\) and \(T_H\) are invertible, the first equality forces \(\alpha\neq 0\), and
		\[
		S=\alpha\,\sigma(\beta')^{-1}T_H^{-1}\in \K^\times T_H^{-1}.
		\]
		Thus, in the case \(n=2\), one obtains
		\[
		S\in \K^\times T_H\ \cup\ \K^\times T_H^{-1}.
		\]
		This proves the first assertion in all cases, and the second assertion is obtained in the same way.
	\end{proof}

	\begin{theorem}\label{thm:comparison-F-G}
		Assume that \(1\le k\le n/2\). Then the following are equivalent:
		\begin{enumerate}
			\item[\rm (i)] The codes \(\mathcal C_k(F)\) and \(\mathcal C_k(G)\) are linearly equivalent.
			
			\item[\rm (ii)] There exists \(\lambda\in\F_q^\times\) such that
			\[
			F(Y)=\lambda^sG(\lambda^{-1}Y),
			\]
			or, only in the case \(n=2\), such that
			\[
			F(Y)=\lambda^s\widehat{G}(\lambda^{-1}Y).
			\]
			
			\item[\rm (iii)] There exist roots \(u_F\) of \(F(Y)\) and \(u_G\) of \(G(Y)\) in \(\F\), and an element
			\(\lambda\in\F_q^\times\), such that
			\[
			u_F=\lambda u_G,
			\]
			or, only in the case \(n=2\), such that
			\[
			u_F=\lambda u_G^{-1}.
			\]
		\end{enumerate}
	\end{theorem}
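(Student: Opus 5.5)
(ii)$\Leftrightarrow$(iii) is elementary. If $u_G$ is a root of $G$, then $\lambda u_G$ is a root of $\lambda^sG(\lambda^{-1}Y)$, which is monic irreducible of degree $s$ over $\F_q$. Likewise $\lambda u_G^{-1}$ is a root of $\lambda^s\widehat G(\lambda^{-1}Y)$. Two monic irreducible polynomials over $\F_q$ sharing a root coincide, so each relation between polynomials in (ii) is equivalent to the corresponding relation between suitable roots in (iii). The real work is (i)$\Leftrightarrow$(iii).

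For (iii)$\Rightarrow$(i), first choose $w_F,w_G$ compatibly. If $u_F=\lambda u_G$ with $\lambda\in\F_q^\times$, surjectivity of $\N_{\LL/\F}$ and of $\N_{\LL/\F}$ restricted to $\F_{q^{n'}}$ onto $\F_q^\times$ lets us write $\lambda=\N_{\LL/\F}(\mu)$ with $\mu\in\LL$. By Theorem~\ref{th:classificationsirreducible}(b), $T_{w_G,\sigma}$ and $T_{\mu w_G,\sigma}$ are conjugate to the operators with the prescribed norms. Since $\Psi_w$ depends only on the conjugacy class up to the conjugation in Theorem~\ref{th:polydescription}, we may assume $T_F$ is conjugate to $\mu\cdot$-twisted $T_G$. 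The cleaner route is to stay in the skew model. The map $X\mapsto cX$, with $c\in\K$ and $\N_{\K/\F_q}(c)=\lambda$, extends to a ring isomorphism $R\to R$ sending $G(X^n)$ to a scalar multiple of $\lambda^sG(\lambda^{-1}X^n)$ up to orientation; one checks $(cX)^n=\N(c)X^n$. It therefore induces an $\E$-semilinear ring isomorphism $R_G\to R_F$ mapping $\Sk(G)$ onto $\Sk(F)$, since it preserves degrees and is $\K$-linear. By Skolem--Noether this becomes a conjugation after matrix identification, giving linear equivalence, possibly after composing with the $\F_q$-algebra identification of the centres, which is linear because $\lambda\in\F_q$.

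In the case $n=2$ with $u_F=\lambda u_G^{-1}$, we additionally use $X\mapsto X^{-1}$ in $R_G$. The code $\Sk(G)$ is carried onto $\Sk(G)X^{-(sk-1)}$, whose elements are spanned by $X^{-j}$ for $0\le j\le sk-1$. Multiplying on the right by $X^{sk-1}$ lands back in degree $\le sk-1$ with root $u_G^{-1}$. Concretely, $T_G^{-1}$ is $\sigma^{-1}=\sigma$-semilinear with $(T_G^{-1})^2=u_G^{-1}\id_V$, and $\C_k(G)T_G^{-(sk-1)}$ is the span of the powers of $T_G^{-1}$. Since $\C_k(G)T_G^{-(sk-1)}$ is linearly equivalent to $\C_k(G)$, this reduces to the previous case with $\widehat G$ in place of $G$.

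For (i)$\Rightarrow$(iii), suppose $M\C_k(F)N=\C_k(G)$. By Lemma~\ref{lem:equiv-centralizer}, $M$ and $N$ lie in the centraliser of $\K$, hence are semilinear of some types $\sigma^a$ and $\sigma^b$ by Proposition~\ref{prop:semilinear-centralizer}. Set $S:=MT_FM^{-1}$, which is $\sigma$-semilinear and invertible. Using $\C_k(F)\cap T_F\C_k(F)\supseteq$ the span of $T_F^i$ for $1\le i\le sk-1$, which has size $q^{n(sk-1)}$, we get
\[
|\C_k(G)\cap S\C_k(G)|=|M(\C_k(F)\cap T_F\C_k(F))N|=q^{n(sk-1)}.
\]
This requires reducing to $k\le n/2$ first; if $k>n/2$ we pass to the Delsarte dual, which I expect is handled elsewhere, so here we assume $k\le n/2$. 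Proposition~\ref{prop:semilinear-collapse} then gives $MT_FM^{-1}=\delta T_G$, or $\delta T_G^{-1}$ when $n=2$, with $\delta\in\K^\times$. Taking $n$-th powers, and using that $M$ is $\sigma^a$-semilinear while $u_F\in\F$ is central, yields
\[
\tau(u_F)=\N_{\K/\F_q}(\delta)\,u_G^{\pm1},
\]
where $\tau$ is the restriction of $\sigma^a$ acting on $\E$. Since $\tau(u_F)$ is another root of $F$, this is (iii) with $\lambda=\N(\delta)$.

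The main obstacle is the norm computation $(\delta T_G)^n=\N_{\K/\F_q}(\delta)T_G^n$, together with a careful check of how $M$ acts on $u_F\id_V$: we need $M$ to commute with $\E$ up to a Galois twist that fixes $F$. This should hold because $M$ is $\E$-linear by hypothesis, so in fact $\tau=\id$.
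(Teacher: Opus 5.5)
\textbf{Verdict.} Your proof is correct, but it reaches (iii)$\Rightarrow$(i) by a different route from the paper. Your handling of (ii)$\Leftrightarrow$(iii) and of (i)$\Rightarrow$(iii) is the paper's argument: Lemma~\ref{lem:equiv-centralizer}, then the intersection $\C_k(F)\cap T_F\C_k(F)$ of size $q^{n(sk-1)}$, then Proposition~\ref{prop:semilinear-collapse}, then $n$-th powers. Note that this intersection is an equality, not merely a containment, since $sk<ns$. Because $M\in\GL_{\E}(V)$ commutes with $u_F\,\id_V$, your $\tau$ is trivial from the outset. Your appeal to Proposition~\ref{prop:semilinear-centralizer} usefully makes explicit why $MT_FM^{-1}$ is $\sigma$-semilinear.

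\textbf{The paper's route for (iii)$\Rightarrow$(i).} The paper stays in $\End_{\E}(V)$. It takes $\delta\in\K^\times$ with $\N_{\K/\F_q}(\delta)=\lambda$ and uses Theorem~\ref{th:classificationsirreducible}(b) to find $M\in\GL_{\K}(V)$ with $MT_FM^{-1}=\delta T_G$. It notes that $M$ is $\E$-linear because it commutes with $T_F^n=u_F\,\id_V$, and reads off $M\C_k(F)M^{-1}=\C_k(G)$. For $n=2$ it repeats this with $\delta T_G^{-1}$ and then multiplies on the right by $T_G^{s-1}$.

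\textbf{Your route.} You use the explicit automorphism $X\mapsto cX$ of $R$, which is well defined since $(cX)\alpha=\sigma(\alpha)(cX)$, and you have $(cX)^n=\N_{\K/\F_q}(c)X^n$. Mapping $R_G\to R_F$, take $\N_{\K/\F_q}(c)=\lambda^{-1}$, not $\lambda$. This sends $G(X^n)$ to $\lambda^{-s}F(X^n)$, so it induces $R_G\cong R_F$. Being degree preserving, it maps $\Sk(G)$ onto $\Sk(F)$. For $n=2$, the map $X\mapsto X^{-1}$ gives an isomorphism $R_{\widehat G}\cong R_G$, because $\sigma^{-1}=\sigma$ and $\widehat G(X^{-2})=0$ in $R_G$. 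It carries $\Sk(\widehat G)$ onto $\Sk(G)X^{-(sk-1)}$, which reduces to the first case for the pair $(F,\widehat G)$. State it this way rather than via ``powers of $T_G^{-1}$'', which would need another appeal to Theorem~\ref{th:classificationsirreducible}(b).

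\textbf{Comparison.} Your route is more elementary and fully explicit: it needs no classification of irreducible semilinear maps in this direction. The paper's route gives the equivalence directly as a conjugation inside the common space $\End_{\E}(V)$. You must additionally check that your isomorphism is linear over the centres before applying Skolem--Noether and Proposition~\ref{prop:matrix-vs-semilinear}. The real reason it is linear is not merely that $\lambda\in\F_q$. It is that the map sends $X^n\mapsto\lambda^{-1}X^n$, i.e.\ $u_G\mapsto\lambda^{-1}u_F=u_G$ under the identifications $\E_G\cong\F\cong\E_F$ given by the roots used to build $T_G,T_F$.

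\textbf{One deletion.} The first paragraph of your (iii)$\Rightarrow$(i) (norms from $\LL$, ``$\mu$-twisted $T_G$'') is muddled and should simply be deleted.
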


	\begin{proof}
		We prove \((i)\Rightarrow(iii)\), \((iii)\Leftrightarrow(ii)\), and \((iii)\Rightarrow(i)\).
		
		\medskip
		
		\noindent
		\((i)\Rightarrow(iii)\).
		Assume that
		\[
		M\mathcal C_k(F) N=\mathcal C_k(G)
		\]
		for some \(M,N\in \GL_{\E}(V)\). By Lemma~\ref{lem:equiv-centralizer},
		\[
		M,N\in \Cen_{\End_\E(V)}(\K).
		\]
		Consider the element \(X+RF(X^n)\in R_F\). We have
		\[
		\left|\Sk(F)\cap (X+RF(X^n))\,\Sk(F)\right|=q^{n(sk-1)}.
		\]
		Applying \(\Psi_{w_F}\), we obtain
		\[
		|\mathcal C_k(F)\cap T_F\mathcal C_k(F)|=q^{n(sk-1)}.
		\]
		Therefore
		\[
		|M(\mathcal C_k(F)\cap T_F\mathcal C_k(F))N|=q^{n(sk-1)}.
		\]
		Since \(M\mathcal C_k(F)N=\mathcal C_k(G)\), this yields
		\[
		|\mathcal C_k(G)\cap (MT_FM^{-1})\mathcal C_k(G)|=q^{n(sk-1)}.
		\]
		Now \(MT_FM^{-1}\) is again an invertible \(\sigma\)-semilinear operator on \(V\), so by
		Proposition~\ref{prop:semilinear-collapse}, if $n>2$, there exists \(\delta\in \K^\times\) such that
		\[
		MT_FM^{-1}=\delta T_G.
		\]
		Raising both sides to the \(n\)-th power, we obtain
		\[
		u_F\,\id_V
		=
		MT_F^nM^{-1}
		=
		(\delta T_G)^n
		=
		\N_{\K/\F_q}(\delta)\,u_G\,\id_V.
		\]
		Hence
		\[
		u_F=\lambda u_G
		\qquad\text{with}\qquad
		\lambda:=\N_{\K/\F_q}(\delta)\in \F_q^\times.
		\]
		
		Assume now that \(n=2\). Then Proposition~\ref{prop:semilinear-collapse} gives
		\[
		MT_FM^{-1}\in \K^\times T_G\ \cup\ \K^\times T_G^{-1}.
		\]
		If \(MT_FM^{-1}=\delta T_G\), then, as above,
		\[
		u_F=\N_{\K/\F_q}(\delta)u_G.
		\]
		If instead \(MT_FM^{-1}=\delta T_G^{-1}\), then
		\[
		u_F\,\id_V
		=
		MT_F^2M^{-1}
		=
		(\delta T_G^{-1})^2
		=
		\N_{\K/\F_q}(\delta)T_G^{-2}
		=
		\N_{\K/\F_q}(\delta)u_G^{-1}\id_V.
		\]
		Thus in the case \(n=2\) one obtains
		\[
		u_F=\lambda u_G
		\quad\text{or}\quad
		u_F=\lambda u_G^{-1}
		\]
		for some \(\lambda\in\F_q^\times\).
		
		\medskip
		
		\noindent
		
		\((iii)\Rightarrow(ii)\).
		Assume that
		\[
		u_F=\lambda u_G
		\qquad\text{for some }\lambda\in \F_q^\times,
		\]
		where \(u_F\) is a root of \(F(Y)\) and \(u_G\) is a root of \(G(Y)\) in \(\F\). Then
		\[
		0=G(u_G)=G(\lambda^{-1}u_F),
		\]
		so the polynomial
		\[
		H(Y):=\lambda^sG(\lambda^{-1}Y)
		\]
		is monic, belongs to \(\F_q[Y]\), has degree \(s\), and vanishes at \(u_F\). Since \(F(Y)\) is the
		minimal polynomial of \(u_F\) over \(\F_q\) and has the same degree, we conclude that
		\[
		F(Y)=H(Y)=\lambda^sG(\lambda^{-1}Y).
		\]
		Assume now that the additional case occurring for \(n=2\) holds, namely
		\[
		u_F=\lambda u_G^{-1}
		\qquad\text{for some }\lambda\in\F_q^\times.
		\]
		Since \(u_G^{-1}\) is a root of the reciprocal polynomial \(\widehat G\), we have
		$
		\widehat G(\lambda^{-1}u_F)=0$. Therefore the polynomial
		\[
		H(Y):=\lambda^s\widehat G(\lambda^{-1}Y)
		\]
		is monic, belongs to \(\F_q[Y]\), has degree \(s\), and vanishes at \(u_F\). Again, by the
		minimality of \(F(Y)\), we obtain
		\[
		F(Y)=H(Y)=\lambda^s\widehat G(\lambda^{-1}Y).
		\]
		\medskip
		
		\noindent
		\((ii)\Rightarrow(iii)\).
		Conversely, if
		\[
		F(Y)=\lambda^sG(\lambda^{-1}Y)
		\qquad\text{for some }\lambda\in \F_q^\times,
		\]
		and \(u_G\) is any root of \(G(Y)\), then
		\[
		F(\lambda u_G)=\lambda^sG(u_G)=0,
		\]
		hence \(u_F:=\lambda u_G\) is a root of \(F(Y)\). In the remaining case, which may occur only when \(n=2\), suppose that
		\[
		F(Y)=\lambda^s\widehat G(\lambda^{-1}Y)
		\qquad\text{for some }\lambda\in\F_q^\times.
		\]
		Let \(u_G\) be any root of \(G(Y)\). Then \(u_G^{-1}\) is a root of \(\widehat G(Y)\), and hence
		\[
		F(\lambda u_G^{-1})
		=
		\lambda^s\widehat G(u_G^{-1})
		=
		0.
		\]
		Thus \(u_F:=\lambda u_G^{-1}\) is a root of \(F(Y)\).
		
		\medskip
		
		\noindent
		\((iii)\Rightarrow(i)\).
		Assume that
		\[
		u_F=\lambda u_G
		\]
		for some \(\lambda\in \F_q^\times\). Since the norm map
		\[
		\N_{\K/\F_q}:\K^\times\longrightarrow \F_q^\times
		\]
		is surjective, there exists \(\delta\in \K^\times\) such that
		\[
		\N_{\K/\F_q}(\delta)=\lambda.
		\]
		Then
		\[
		(\delta T_G)^n
		=
		\N_{\K/\F_q}(\delta)\,T_G^n
		=
		\lambda u_G\,\id_V
		=
		u_F\,\id_V
		=
		T_F^n.
		\]
		Now \(T_F\) and \(\delta T_G\) are both irreducible \(\sigma\)-semilinear operators on the
		\(\K\)-space \(V\). By Theorem~\ref{th:classificationsirreducible}~(b), there exists
		\(M\in \GL_\K(V)\) such that
		\[
		MT_FM^{-1}=\delta T_G.
		\]
		We claim that in fact \(M\in \GL_\E(V)\). Indeed,
		\[
		MT_F^nM^{-1}
		=
		(\delta T_G)^n
		=
		T_F^n.
		\]
		Hence \(M\) commutes with \(T_F^n=u_F\,\id_V\). Since \(u_F\) generates \(\F=\F_{q^s}\) over
		\(\F_q\), we have
		\[
		\E=\E_{T_F}=\F_q[T_F^n]=\F\,\id_V.
		\]
		Therefore \(M\) commutes with every scalar multiplication by an element of \(\F\), and so
		\[
		M\in \GL_\E(V).
		\]
		For every \(i\ge 0\), we then have
		\[
		MT_F^iM^{-1}=(\delta T_G)^i=\delta_iT_G^i
		\]
		for some \(\delta_i\in \K^\times\). Consequently,
		\[
		M\mathcal C_k(F)M^{-1}
		=
		\left\{
		\sum_{i=0}^{sk-1}\alpha_i\delta_i\cdot T_G^i:\alpha_i\in \K
		\right\}.
		\]
		Since each \(\delta_i\) is nonzero, the set of all coefficients \(\alpha_i\delta_i\) still runs
		through all of \(\K\). Thus
		\[
		M\mathcal C_k(F)M^{-1}
		=
		\left\{
		\sum_{i=0}^{sk-1}\beta_i\cdot T_G^i:\beta_i\in \K
		\right\}
		=
		\mathcal C_k(G).
		\]
		Hence \(\mathcal C_k(F)\) and \(\mathcal C_k(G)\) are linearly equivalent. It remains to consider the additional case which may occur when \(n=2\), namely
		$
		u_F=\lambda u_G^{-1},$ for some $\lambda\in\F_q^\times.
		$
		As \(n=2\), the operator \(T_G^{-1}\) is again \(\sigma\)-semilinear. Moreover, as before,
		\[
		(\delta T_G^{-1})^2
		=
		\N_{\K/\F_q}(\delta)T_G^{-2}
		=
		\lambda u_G^{-1}\id_V
		=
		u_F\id_V
		=
		T_F^2.
		\]
		The operator \(T_G^{-1}\) is irreducible whenever \(T_G\) is irreducible. Hence
		\(T_F\) and \(\delta T_G^{-1}\) are irreducible \(\sigma\)-semilinear operators on the
		\(\K\)-space \(V\) with the same square. By Theorem~\ref{th:classificationsirreducible}~(b), there exists \(M\in\GL_\K(V)\) such that
		\[
		MT_FM^{-1}=\delta T_G^{-1}.
		\]
		We claim, as above, that \(M\in\GL_\E(V)\). Indeed,
		\[
		MT_F^2M^{-1}
		=
		(\delta T_G^{-1})^2
		=
		T_F^2.
		\]
		Thus \(M\) commutes with \(T_F^2=u_F\id_V\). Since \(u_F\) generates \(\F=\F_{q^s}\) over
		\(\F_q\), it follows that
		$
		M\in\GL_\E(V).$ Since \(n=2\) and \(1\le k\le n/2\), we have \(k=1\). For every \(i\ge0\), there exists
		\(\delta_i\in\K^\times\) such that
		\[
		MT_F^iM^{-1}=(\delta T_G^{-1})^i=\delta_iT_G^{-i}.
		\]
		Therefore
		\[
		\begin{aligned}
			M\mathcal C_1(F)M^{-1}T_G^{s-1}
			&=
			\left\{
			\sum_{i=0}^{s-1}\alpha_i\delta_iT_G^{-i}T_G^{s-1}:\alpha_i\in\K
			\right\}  \\
			&=
			\left\{
			\sum_{i=0}^{s-1}\alpha_i\delta_iT_G^{s-1-i}:\alpha_i\in\K
			\right\}  \\
			&=
			\mathcal C_1(G).
		\end{aligned}
		\]
		Hence \(\mathcal C_1(F)\) and \(\mathcal C_1(G)\) are linearly equivalent.
	\end{proof}
	
	Finally, we are ready to prove Theorem \ref{thm:comparison-skew-model}.	
	\begin{proof}[Proof of Theorem \ref{thm:comparison-skew-model}]
		If \(1\le k\le n/2\), the assertion follows immediately from
		Proposition~\ref{prop:matrix-vs-semilinear} and Theorem~\ref{thm:comparison-F-G}.
		
		Assume now that \(k>n/2\). Then necessarily \(n>2\), and hence the alternative involving the
		reciprocal polynomial cannot occur. First suppose that there exists \(\lambda\in \F_q^\times\) such that
		\[
		F(Y)=\lambda^sG(\lambda^{-1}Y).
		\]
		Then the argument proving \((iii)\Rightarrow(i)\) in Theorem~\ref{thm:comparison-F-G} shows that the
		semilinear codes \(\mathcal C_k(F)\) and \(\mathcal C_k(G)\) are linearly equivalent. Hence, by
		Proposition~\ref{prop:matrix-vs-semilinear}, the codes \(\Sk(F)\) and \(\Sk(G)\) are linearly
		equivalent.
		
		Conversely, assume that \(\Sk(F)\) and \(\Sk(G)\) are linearly equivalent. Let
		\[
		\mathcal M_F:R_F\stackrel{\sim}{\longrightarrow} M_n(\F_{q^s}),
		\qquad
		\mathcal M_G:R_G\stackrel{\sim}{\longrightarrow} M_n(\F_{q^s})
		\]
		be \(\F_{q^s}\)-algebra isomorphisms, and set
		\[
		\mathcal D_F:=\mathcal M_F(\Sk(F)),
		\qquad
		\mathcal D_G:=\mathcal M_G(\Sk(G)).
		\]
		By assumption, there exist \(U,V\in \GL_n(\F_{q^s})\) such that
		\[
		\mathcal D_G=U\mathcal D_FV.
		\]
		Taking Delsarte duals with respect to the trace bilinear form on \(M_n(\F_{q^s})\), we obtain
		\[
		\mathcal D_G^\perp=V^{-\top}\mathcal D_F^\perp U^{-\top}.
		\]
		Hence \(\mathcal D_F^\perp\) and \(\mathcal D_G^\perp\) are linearly equivalent, and therefore so
		are the dual codes \(\Sk(F)^\perp\) and \(\Sk(G)^\perp\).
		
		By \cite[Proposition~5.9]{gomez2025adjoint}, the dual code \(\Sk(F)^\perp\) is
		equivalent to \(\mathcal S_{n,s,n-k}(\widehat F)\), and similarly \(\Sk(G)^\perp\) is
		equivalent to \(\mathcal S_{n,s,n-k}(\widehat G)\), where \(\widehat F\) and \(\widehat G\) denote
		the monic reciprocal polynomials associated with \(F\) and \(G\), respectively. Since
		\(n-k<n/2\), the first part of the proof applies to these codes. Therefore there exists
		\(\mu\in \F_q^\times\) such that
		\[
		\widehat F(Y)=\mu^s\widehat G(\mu^{-1}Y).
		\]
		By the definition of the monic reciprocal polynomial, the correspondence
		\(P(Y)\mapsto \widehat P(Y)\) is involutory and commutes with the transformation
		\(P(Y)\mapsto \mu^sP(\mu^{-1}Y)\). Applying this to the last identity, we obtain
		\[
		F(Y)=\mu^{-s}G(\mu Y).
		\]
		Equivalently, setting \(\lambda:=\mu^{-1}\), this becomes
		\[
		F(Y)=\lambda^sG(\lambda^{-1}Y).
		\]
		This proves the converse implication and completes the proof.
	\end{proof}
	
	\subsection{Semilinear equivalence over \(\F_p\)}
	
	We now pass to semilinear equivalence. The key point is that twisting by a field automorphism of
	\(\E=\F_{q^s}\) replaces the defining polynomial \(F\) by its coefficientwise conjugate over
	\(\F_p\). \newline 
	For \(\tau\in \Gal(\F_q/\F_p)\) and \(H(Y)=\sum_i a_iY^i\in \F_q[Y]\), we write
	\[
	H^\tau(Y):=\sum_i \tau(a_i)Y^i.
	\]
	
	The next lemma shows this coefficientwise Galois action as a semilinear
	conjugacy of the associated irreducible semilinear transformation.
	
	\begin{lemma}\label{lem:prescribed-semilinear-conjugacy}
		Let \(\rho\in\Aut_{\F_p}(\E)\), and set
		$
		\tau:=\rho|_{\F_q}\in\Gal(\F_q/\F_p).$
		Let \(\tau_\K\in\Aut_{\F_p}(\K)\) be such that $
		\tau_{\K}|_{\F_q}=\tau$. Then there exists an invertible \(\tau_\K\)-semilinear
		map
		$
		N\colon V\longrightarrow V
		$
		with respect to the \(\K\)-vector space structure such that
		\[
		N T_F N^{-1}=T_{F^\tau},
		\]
		where \(T_{F^\tau}\) is such that
		$
		T_{F^\tau}^n=\rho(u_F)\,\id_V.$
	\end{lemma}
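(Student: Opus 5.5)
The plan is to build \(N\) from a field automorphism of \(\LL\) acting coordinatewise on \(V=\LL^\ell\), and then to correct its semilinear type using a power of the target operator, which commutes with that operator. Throughout, write \(w:=w_F\), so that \(u_F=\N_{\LL/\F}(w)\).

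\textbf{Step 1: choose \(\theta\).} Since \(\F\subseteq\LL\), the automorphism \(\rho\in\Aut_{\F_p}(\F)\) extends to some \(\theta\in\Gal(\LL/\F_p)\). Define
\[
N_0(x_0,\ldots,x_{\ell-1}):=(\theta(x_0),\ldots,\theta(x_{\ell-1})).
\]
Two facts about \(\theta\) will be used repeatedly. First, \(\Gal(\LL/\F_p)\) is cyclic, hence abelian, so \(\theta\) commutes with \(\gamma\) and with every extension of \(\sigma^i\) to \(\LL\). Second, \(\theta\) preserves \(\K\), since it is the unique subfield of \(\LL\) of order \(q^n\).

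\textbf{Step 2: semilinearity of \(N_0\).} Using the \(\K\)-structure \eqref{eq:vectorspaceconstruction}, for \(\alpha\in\K\) we get
\[
N_0(\alpha\cdot x)=\bigl(\theta(\alpha)\theta(x_0),\,\theta(\alpha^{\sigma})\theta(x_1),\ldots\bigr)=\theta(\alpha)\cdot N_0(x),
\]
because \(\theta(\alpha^{\sigma^i})=\theta(\alpha)^{\sigma^i}\). Hence \(N_0\) is \(\theta|_\K\)-semilinear.

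\textbf{Step 3: conjugating \(T_{w,\sigma}\).} Direct evaluation gives
\[
N_0T_{w,\sigma}N_0^{-1}(x)=\bigl(x_1,\ldots,x_{\ell-1},\,\theta(w)\,\theta(\theta^{-1}(x_0)^\gamma)\bigr)=T_{\theta(w),\sigma}(x),
\]
using that \(\theta\gamma=\gamma\theta\). Moreover
\[
\N_{\LL/\F}(\theta(w))=\theta\bigl(\N_{\LL/\F}(w)\bigr)=\rho(u_F),
\]
again because \(\theta\) commutes with \(\gamma\). Now \(\rho(u_F)\) is a root of \(F^\tau\), since \(\rho|_{\F_q}=\tau\). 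It also generates \(\F\) over \(\F_q\), so by the earlier discussion \(T_{\theta(w),\sigma}\) is irreducible. We may therefore take \(T_{F^\tau}:=T_{\theta(w),\sigma}\), which satisfies \(T_{F^\tau}^n=\rho(u_F)\,\id_V\) as required.

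\textbf{Step 4: adjusting the semilinear type.} This is where I expect the only real obstacle. The restriction \(\theta|_\K\) need not equal the prescribed \(\tau_\K\): the two agree on \(\F_q\), but the constraint from \(\rho\) on \(\F\cap\K=\F_{q^\ell}\) may be incompatible with \(\tau_\K\). However, both \(\theta|_\K\) and \(\tau_\K\) restrict to \(\tau\) on \(\F_q\), so \(\theta|_\K=\tau_\K\sigma^j\) for some integer \(j\). Set
\[
N:=T_{F^\tau}^{-j}N_0 .
\]
This map is invertible. It is \(\sigma^{-j}\tau_\K\sigma^{j}=\tau_\K\)-semilinear, using commutativity of \(\Gal(\K/\F_p)\). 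Since \(T_{F^\tau}^{-j}\) commutes with \(T_{F^\tau}\), Step 3 gives \(NT_FN^{-1}=T_{F^\tau}\). It remains to check that \(N\) is \(\E\)-compatible in the sense needed later, namely \(\rho\)-semilinear over \(\E=\F\,\id_V\). This holds because \(T_{F^\tau}^{-j}\) commutes with \(\E\), being a power of an operator whose \(n\)-th power lies in \(\E\), and \(N_0\) acts on \(\F\)-scalars by \(\theta|_\F=\rho\).
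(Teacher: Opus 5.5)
Your proof is correct, and it takes a genuinely different route from the paper's. The paper picks an arbitrary invertible $\tau_\K$-semilinear $\Lambda$ and notes that $T'=\Lambda T_F\Lambda^{-1}$ is $\sigma$-semilinear with $F^\tau((T')^n)=0$. It then invokes Dempwolff's conjugacy theorem (irreducible $\sigma$-semilinear operators whose $n$-th powers have the same minimal polynomial are $\GL_\K(V)$-conjugate) to get $M$ with $MT'M^{-1}=T_{F^\tau}$, and sets $N=M\Lambda$.

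You instead stay inside the normal form. The coordinatewise extension $\theta$ of $\rho$ to $\LL$ conjugates $T_{w_F,\sigma}$ to $T_{\theta(w_F),\sigma}$, since $\Gal(\LL/\F_p)$ is cyclic. The possible mismatch between $\theta|_\K$ and $\tau_\K$ (which can occur because $\theta$ is pinned down by $\rho$ on $\F\cap\K=\F_{q^\ell}$) is absorbed by the $\sigma^{-j}$-semilinear factor $T_{F^\tau}^{-j}$, which centralizes $T_{F^\tau}$.

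Your route is explicit and avoids the conjugacy theorem. It also gives the $\rho$-semilinearity of $N$ over $\E$ for free, which the paper has to derive separately in Lemma~\ref{lem:twist-to-conjugate-polynomial}.

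The price is that you prove the statement only for the representative $T_{F^\tau}=T_{\theta(w_F),\sigma}$. This is a legitimate choice: the paper itself says it chooses $T_{F^\tau}$, and the later results do not depend on which $w$ with $\N_{\LL/\F}(w)=\rho(u_F)$ is used. But if $T_{F^\tau}=T_{w',\sigma}$ were fixed in advance, you would need to post-compose with a $\K$-linear conjugation from Theorem~\ref{th:classificationsirreducible}(b). That conjugation centralizes $\rho(u_F)\,\id_V$ and hence $\E$, but it is exactly where the Dempwolff input re-enters. The paper's argument handles any prescribed $T_{F^\tau}$ at once.

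One small imprecision in your last step: being ``a power of an operator whose $n$-th power lies in $\E$'' does not by itself imply commuting with $\E$. What you need is that $T_{F^\tau}^n=\rho(u_F)\,\id_V$ generates $\E$ over $\F_q$. Alternatively, check directly that $T_{\theta(w_F),\sigma}$ commutes with coordinatewise multiplication by $\F$, because $\gamma$ fixes $\F$.
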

	
	\begin{proof}
		Choose any invertible \(\tau_\K\)-semilinear map
		$
		\Lambda \colon V\longrightarrow V$
		with respect to the \(\K\)-vector space structure. Therefore
		\[
		T':=\Lambda T_F \Lambda^{-1}
		\]
		is again \(\sigma\)-semilinear over \(\K\).
		We claim that the minimal polynomial of \((T')^n\) over \(\F_q\) is
		\(F^\tau\). Indeed, write
		\[
		F(Y)=\sum_i F_iY^i,\qquad F_i\in\F_q.
		\]
		Since \(F(T_F^n)=0\), we have
		$
		0=\Lambda F(T_F^n)\Lambda^{-1}$.
		Now \(\Lambda\) is \(\tau_\K\)-semilinear over \(\K\), and
		\(\tau_\K|_{\F_q}=\tau\). Hence
		\[
		\Lambda(F_i\,\id_V)\Lambda^{-1}=\tau(F_i)\,\id_V
		\qquad(F_i\in\F_q).
		\]
		Therefore
		\[
		0=\Lambda F(T_F^n)\Lambda^{-1}
		=
		\sum_i \tau(F_i)(\Lambda T_F^n\Lambda^{-1})^i
		=
		F^\tau((T')^n).
		\]
		Since \(F^\tau\) is irreducible and has the same degree as \(F\), it is the
		minimal polynomial of \((T')^n\) over \(\F_q\). On the other hand, by our choice of \(T_{F^\tau}\),
		\[
		T_{F^\tau}^n=\rho(u_F)\,\id_V,
		\]
		and \(\rho(u_F)\) is a root of \(F^\tau\). Hence the minimal polynomial of
		\(T_{F^\tau}^n\) over \(\F_q\) is also \(F^\tau\). Thus \(T'\) and \(T_{F^\tau}\) are irreducible \(\sigma\)-semilinear
		operators whose \(n\)-th powers have the same minimal polynomial over
		\(\F_q\). By 
		\cite[Theorem~2.10]{Dempwolff2010onirreducible}, there exists
		\(M \in\GL_\K(V)\) such that
		\[
		M T' M^{-1}=T_{F^\tau}.
		\]
		Set
		\[
		N:=M \Lambda.
		\]
		Since \(M\) is \(\K\)-linear and \(\Lambda\) is \(\tau_\K\)-semilinear, the map
		\(N\) is \(\tau_\K\)-semilinear over \(\K\). Moreover,
		\[
		N T_F N^{-1}
		=
		M \Lambda T_F \Lambda^{-1}M^{-1}
		=
		M T'M^{-1}
		=
		T_{F^\tau}.
		\]
		This proves the claim.
	\end{proof}
	Let \(\mathcal B=(e_1,\dots,e_n)\) be a fixed \(\E\)-basis of \(V\). For
	\(\rho\in \Aut_{\F_p}(\E)\), let
	\[
	L_\rho\colon V\to V
	\]
	be the unique \(\rho\)-semilinear map such that
	\[
	L_\rho(e_i)=e_i
	\qquad\text{for all }i=1,\dots,n.
	\]
	Equivalently,
	\[
	L_\rho\Bigl(\sum_{i=1}^n x_i e_i\Bigr)=\sum_{i=1}^n \rho(x_i)e_i
	\qquad\text{for all }x_1,\dots,x_n\in \E.
	\]
	For \(A\in \End_{\E}(V)\), we define
	\[
	A^\rho:=L_\rho A L_\rho^{-1},
	\]
	and for a subset \(\mathcal D\subseteq \End_{\E}(V)\), we set
	\[
	\mathcal D^\rho:=\{A^\rho:A\in\mathcal D\}=L_\rho \mathcal D L_\rho^{-1}.
	\]
	In particular, \(\mathcal C_k(F)^\rho\) denotes the image of \(\mathcal C_k(F)\) under this conjugation.
	Equivalently, after identifying \(\End_{\E}(V)\) with \(M_n(\E)\) via the basis \(\mathcal B\),
	the operator \(A^\rho\) is obtained by applying \(\rho\) entrywise to the matrix of \(A\).

	\begin{lemma}\label{lem:twist-to-conjugate-polynomial}
		Let \(\rho\in\Aut_{\F_p}(\E)\), and set
		$
		\tau:=\rho|_{\F_q}\in\Gal(\F_q/\F_p).$
		Then the codes \(\mathcal C_k(F)^\rho\) and
		\(\mathcal C_k(F^\tau)\) are linearly equivalent.
	\end{lemma}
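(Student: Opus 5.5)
We will exhibit an explicit linear equivalence by combining the semilinear map $L_\rho$ with the $\tau_\K$-semilinear conjugacy of Lemma~\ref{lem:prescribed-semilinear-conjugacy}. Fix $\tau_\K\in\Aut_{\F_p}(\K)$ extending $\tau$, and let $N$ be the invertible $\tau_\K$-semilinear map with $NT_FN^{-1}=T_{F^\tau}$ given by that lemma. Here $T_{F^\tau}$ is chosen so that $T_{F^\tau}^n=\rho(u_F)\,\id_V$; this is legitimate because $\rho(u_F)$ is a root of $F^\tau$.

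\textbf{Conjugating by $N$.} For $\alpha\in\K$ we have $N(\alpha\,\id_V)N^{-1}=\tau_\K(\alpha)\,\id_V$, and $\tau_\K$ is a bijection of $\K$. Hence
\[
N\,\mathcal C_k(F)\,N^{-1}=\Bigl\{\sum_{i=0}^{sk-1}\tau_\K(\alpha_i)T_{F^\tau}^i:\alpha_i\in\K\Bigr\}=\mathcal C_k(F^\tau).
\]

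\textbf{Relating $N$ to $L_\rho$.} We next check how $N$ interacts with $\E=\F\,\id_V$. Since $N$ is additive and commutes with $\F_p$, conjugation by $N$ maps $\E=\F_q[T_F^n]$ to $\F_q[T_{F^\tau}^n]=\E$. Explicitly,
\[
N(u_F\,\id_V)N^{-1}=NT_F^nN^{-1}=T_{F^\tau}^n=\rho(u_F)\,\id_V,
\]
and $N$ acts by $\tau$ on $\F_q$. Because $u_F$ generates $\E$ over $\F_q$, it follows that $N(x\,\id_V)N^{-1}=\rho(x)\,\id_V$ for every $x\in\E$. So $N$ is $\rho$-semilinear as a map of $\E$-spaces, exactly like $L_\rho$.

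\textbf{Producing the linear equivalence.} Set $P:=NL_\rho^{-1}$. It is $\rho\rho^{-1}=\id$-semilinear over $\E$, so $P\in\GL_\E(V)$. Then
\[
P\,\mathcal C_k(F)^\rho\,P^{-1}=NL_\rho^{-1}L_\rho\,\mathcal C_k(F)\,L_\rho^{-1}L_\rho N^{-1}=N\,\mathcal C_k(F)\,N^{-1}=\mathcal C_k(F^\tau).
\]
This is a linear equivalence with left factor $P$ and right factor $P^{-1}$.

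The main point needing care is the second step: showing that $N$ induces precisely $\rho$ on $\E$, rather than some other automorphism restricting to $\tau$ on $\F_q$. This depends on the normalization $T_{F^\tau}^n=\rho(u_F)\,\id_V$ built into Lemma~\ref{lem:prescribed-semilinear-conjugacy}. The rest is bookkeeping, including the observation that $\mathcal C_k(F^\tau)$ is defined through $w_{F^\tau}$ with $\N_{\LL/\F}(w_{F^\tau})=\rho(u_F)$.
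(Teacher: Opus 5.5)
Your proof is correct and follows the paper's argument. The paper also takes $N$ from Lemma~\ref{lem:prescribed-semilinear-conjugacy}, uses $\E=\F_q(u_F)$ to show $N$ is $\rho$-semilinear over $\E$, and then conjugates. Its conjugators $NQ^{-1}N^{-1}$ and $NQN^{-1}$, with $Q=N^{-1}L_\rho$, simplify to exactly your $P=NL_\rho^{-1}$ and $P^{-1}$, so your version is a tidier write-up of the same computation.
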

	
	\begin{proof}
		Let \(u_F\in\E\) be the root of \(F(Y)\) used in the construction of
		\(T_F\), so that
		$
		T_F^n=u_F\,\id_V$.
		Since \(F(u_F)=0\), applying \(\rho\) gives
		\[
		0=\rho(F(u_F))=F^\tau(\rho(u_F)).
		\]
		Thus
		\[
		u_{F^\tau}:=\rho(u_F)
		\]
		is a root of \(F^\tau(Y)\). We choose \(T_{F^\tau}\) to be the
		irreducible \(\sigma\)-semilinear operator associated with the root
		\(u_{F^\tau}\), so that
		\[
		T_{F^\tau}^n=u_{F^\tau}\,\id_V=\rho(u_F)\,\id_V.
		\]
		Let \(\tau_{\K}\in\Aut_{\F_p}(\K)\) be an extension of \(\tau\), that is,
		$
		\tau_{\K}|_{\F_q}=\tau$. By Lemma \ref{lem:prescribed-semilinear-conjugacy}, there exists
		an invertible \(\tau_{\K}\)-semilinear map
		$
		N\colon V\rightarrow V$
		with respect to the \(\K\)-vector space structure such that
		\[
		N T_F N^{-1}=T_{F^\tau}.
		\]
		We first record the induced action of \(N\) on the \(\E\)-scalars. Taking
		\(n\)-th powers in
		$
		N T_F N^{-1}=T_{F^\tau}
		$
		gives
		\[
		N T_F^n N^{-1}=T_{F^\tau}^n.
		\]
		Hence
		\[
		N(u_F\,\id_V)N^{-1}=\rho(u_F)\,\id_V,
		\]
		or equivalently
		\[
		N(u_F v)=\rho(u_F)N(v)
		\qquad(v\in V).
		\]
		As a consequence, \(N\) is \(\rho\)-semilinear with respect to the
		\(\E\)-vector space structure. Indeed, since
		$
		\E=\F_q(u_F),$
		every \(z\in\E\) can be written as \(z=h(u_F)\), with
		\(h(Y)\in\F_q[Y]\). Write
		\[
		h(Y)=\sum_j \delta_jY^j,
		\qquad \delta_j\in\F_q.
		\]
		Since \(N\) is \(\tau_{\K}\)-semilinear over \(\K\), and
		\(\tau_{\K}|_{\F_q}=\tau\), we have
		$
		N(\delta_j v)=\tau(\delta_j)N(v)$, with
		$\delta_j\in\F_q$,\ $v\in V$.
		Using also \(N(u_Fv)=\rho(u_F)N(v)\), we obtain
		\[
		\begin{aligned}
			N(zv)
			&=
			N(h(u_F)v)\\
			&=
			N\left(\sum_j \delta_j u_F^jv\right)\\
			&=
			\sum_j \tau(\delta_j)\rho(u_F)^jN(v)\\
			&=
			h^\tau(\rho(u_F))N(v)\\
			&=
			\rho(h(u_F))N(v)\\
			&=
			\rho(z)N(v).
		\end{aligned}
		\]
		Thus
		$
		N(zv)=\rho(z)N(v)$. By definition,
		\[
		\mathcal C_k(F)^\rho
		=
		L_\rho\,\mathcal C_k(F)\,L_\rho^{-1}.
		\]
		The maps \(L_\rho\) and \(N\) are both \(\rho\)-semilinear with respect to
		the \(\E\)-vector space structure. Therefore
		\[
		Q:=N^{-1}L_\rho
		\]
		is \(\E\)-linear. Hence \(Q\in\GL_\E(V)\), and \(L_\rho=NQ\). Consequently
		\[
		\mathcal C_k(F)^\rho
		=
		NQ\,\mathcal C_k(F)\,Q^{-1}N^{-1}.
		\]
		Moreover,
		\[
		\begin{aligned}
			NQ^{-1}N^{-1}\,\mathcal C_k(F)^\rho\,NQN^{-1}
			&=
			NQ^{-1}N^{-1}
			\bigl(
			NQ\,\mathcal C_k(F)\,Q^{-1}N^{-1}
			\bigr)
			NQN^{-1} \\
			&=
			N\,\mathcal C_k(F)\,N^{-1}.
		\end{aligned}
		\] Since \(Q\in\GL_\E(V)\) and \(N\) is \(\rho\)-semilinear over \(\E\), both
		\(NQ^{-1}N^{-1}\) and \(NQN^{-1}\) belong to \(\GL_\E(V)\).
		Thus \(\mathcal C_k(F)^\rho\) is linearly equivalent to
		\(N\mathcal C_k(F)N^{-1}\). Now, since \(N\) is
		\(\tau_\K\)-semilinear over \(\K\), for every \(\alpha\in\K\) we have
		$
		N\alpha N^{-1}=\tau_\K(\alpha).
		$
		Moreover,
		$
		N T_F N^{-1}=T_{F^\tau}.
		$
		Therefore
		\[
		\begin{aligned}
			N\mathcal C_k(F)N^{-1}
			&=
			N
			\left\{
			\sum_{i=0}^{sk-1}\alpha_iT_F^i:
			\alpha_i\in\K
			\right\}
			N^{-1}  \\
			&=
			\left\{
			\sum_{i=0}^{sk-1}
			\tau_\K(\alpha_i)T_{F^\tau}^i:
			\alpha_i\in\K
			\right\} \\
			& =
			\left\{
			\sum_{i=0}^{sk-1}
			\beta_iT_{F^\tau}^i:
			\beta_i\in\K
			\right\} \\
			& =
			\mathcal C_k(F^\tau).
		\end{aligned}
		\]
		We have therefore shown that
		$
		\mathcal C_k(F)^\rho$
		is linearly equivalent to $
		\mathcal C_k(F^\tau),$
		as required.
	\end{proof}

	\begin{theorem}\label{thm:comparison-F-G-semilinear}
		Assume that \(1\le k\le n/2\). Let \(\rho\in \Aut_{\F_p}(\E)\), and set
		$
		\tau:=\rho|_{\F_q}\in \Gal(\F_q/\F_p)$.
		Then the following are equivalent:
		\begin{enumerate}
			\item[\rm (i)] There exist \(M,N\in \GL_{\E}(V)\) such that
			\[
			M\mathcal C_k(F)^\rho N=\mathcal C_k(G);
			\]
			
			\item[\rm (ii)] There exists \(\lambda\in \F_q^\times\) such that
			\[
			F^\tau(Y)=\lambda^sG(\lambda^{-1}Y),
			\]
			or, only in the case \(n=2\), such that
			\[
			F^\tau(Y)=\lambda^s\widehat G(\lambda^{-1}Y).
			\]
			
			\item[\rm (iii)] There exist roots \(u_F\) of \(F(Y)\) and \(u_G\) of \(G(Y)\) in \(\F\), and an
			element \(\lambda\in \F_q^\times\), such that
			\[
			\rho(u_F)=\lambda u_G,
			\]
			or, only in the case \(n=2\), such that
			\[
			\rho(u_F)=\lambda u_G^{-1}.
			\]
		\end{enumerate}
	\end{theorem}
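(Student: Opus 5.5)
The plan is to reduce everything to the linear case, Theorem~\ref{thm:comparison-F-G}, with $F$ replaced by $F^\tau$. Lemma~\ref{lem:twist-to-conjugate-polynomial} says that $\mathcal C_k(F)^\rho$ is linearly equivalent to $\mathcal C_k(F^\tau)$. Its proof realises $\mathcal C_k(F^\tau)$ with the operator $T_{F^\tau}$ built from the root $u_{F^\tau}:=\rho(u_F)$ of $F^\tau$. Linear equivalence is transitive, and compositions of elements of $\GL_\E(V)$ stay in $\GL_\E(V)$. Hence (i) holds if and only if $\mathcal C_k(F^\tau)$ and $\mathcal C_k(G)$ are linearly equivalent. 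We also need that $F^\tau$ is monic irreducible of degree $s$ with $F^\tau\neq Y$, which is clear since $\tau$ is a field automorphism.

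Next, apply Theorem~\ref{thm:comparison-F-G} to the pair $(F^\tau,G)$, which is allowed since $1\le k\le n/2$. This shows (i) is equivalent to the existence of $\lambda\in\F_q^\times$ with $F^\tau(Y)=\lambda^sG(\lambda^{-1}Y)$, or, only when $n=2$, with $F^\tau(Y)=\lambda^s\widehat G(\lambda^{-1}Y)$. That is exactly (ii), so (i)$\Leftrightarrow$(ii).

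For (ii)$\Leftrightarrow$(iii), use the root correspondence from the proof of Theorem~\ref{thm:comparison-F-G}, again applied to $(F^\tau,G)$. Condition (ii) is equivalent to the existence of a root $u'$ of $F^\tau$ and a root $u_G$ of $G$ with $u'=\lambda u_G$ (or $u'=\lambda u_G^{-1}$ when $n=2$). Since $\rho$ restricts to $\tau$ on $\F_q$, the roots of $F^\tau$ in $\F$ are exactly the elements $\rho(u_F)$ with $u_F$ a root of $F$: indeed $F^\tau(\rho(u_F))=\rho(F(u_F))=0$, both polynomials have $s$ distinct roots, and $\rho$ is injective. Substituting $u'=\rho(u_F)$ turns this into (iii), and conversely.

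The one point needing care is the choice of root. Theorem~\ref{thm:comparison-F-G} is stated for the operators $T_{F^\tau}$, $T_G$ attached to particular roots, while (iii) quantifies over arbitrary roots. This is harmless: by Theorem~\ref{th:classificationsirreducible}(b), operators built from Galois-conjugate roots are conjugate under $\GL_\K(V)$. Such a conjugating map commutes with $T^n=u\,\id_V$, hence lies in $\GL_\E(V)$, as in the argument of Theorem~\ref{thm:comparison-F-G}, (iii)$\Rightarrow$(i). So the linear equivalence class of $\mathcal C_k(H)$ does not depend on the chosen root.
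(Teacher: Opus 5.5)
Your first three paragraphs follow the paper's own proof exactly: Lemma \ref{lem:twist-to-conjugate-polynomial}, then Theorem \ref{thm:comparison-F-G} for $(F^\tau,G)$, then the translation to roots. The gap is your last paragraph. You need it to pass from the particular roots $\rho(u_F)$ and $u_G$ that define $T_{F^\tau}$ and $T_G$ to the arbitrary roots appearing in (ii) and (iii), and it is false.

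Suppose $MTM^{-1}=T'$ with $T^n=u\,\id_V$, $T'^n=u'\,\id_V$ and $u'=u^{q^j}\neq u$. Then $M(u\,\id_V)M^{-1}=u'\,\id_V$, so $M$ does \emph{not} commute with $T^n$. It is $\varphi_q^j$-semilinear over $\E$, playing the role of the map $N$ of Lemma \ref{lem:twist-to-conjugate-polynomial} for $\rho=\varphi_q^j$, and it is not an element of $\GL_\E(V)$. In the (iii)$\Rightarrow$(i) part of Theorem \ref{thm:comparison-F-G} the two operators have the \emph{same} $n$-th power, which is why that $M$ is $\E$-linear; your situation is different.

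Nor can another element of $\GL_\E(V)$ rescue the claim. Take $n>2$ and apply the (i)$\Rightarrow$(iii) argument (Lemma \ref{lem:equiv-centralizer} plus Proposition \ref{prop:semilinear-collapse}). An $\E$-linear equivalence between the codes built from $u$ and from $u^q$ forces $u^q=\lambda u$ with $\lambda\in\F_q^\times$. Then $u^{q^j}=\lambda^j u$, so $\lambda$ has order exactly $s$ and $s\mid q-1$. Hence the $\E$-linear class of $\mathcal C_k(H)$ genuinely depends on the chosen root.

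Consequently (i)$\Leftrightarrow$(ii) for the given $\rho$ is not established, and in fact it fails. Take $n>2$, $G=F$ with $T_G=T_F$, $\rho=\varphi_q$ (the $q$-Frobenius on $\E$), and $s\nmid q-1$; for instance $q=2$, $n=3$, $s=2$, $F=Y^2+Y+1$.
\begin{itemize}
\item Here $\tau=\id$, so (ii) holds with $\lambda=1$.
\item But (i) would, via Lemma \ref{lem:twist-to-conjugate-polynomial}, make the code built from $u_F^q$ $\E$-linearly equivalent to the one built from $u_F$. This is impossible by the above; compare $|\mathcal H_F^+|\mid r\gcd(s,q-1)$ in Proposition \ref{prop:size-HF}.
\end{itemize}

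What your route really proves is the following:
\begin{itemize}
\item (i) holds if and only if $\rho(u_F)\in\F_q^\times u_G$ (for $n=2$, alternatively $\rho(u_F)\in\F_q^\times u_G^{-1}$) for the \emph{specific} roots defining $T_F$ and $T_G$. This is the reading used later in Definition \ref{def:HF} and Proposition \ref{prop:field-part-aut}.
\item (ii), equivalently (iii) with arbitrary roots, says that (i) holds for \emph{some} $\rho'$ with $\rho'|_{\F_q}=\tau$. To see this, write $\lambda u_G=\rho(u_F^{q^j})$ and take $\rho'=\rho\circ\varphi_q^j$.
\end{itemize}

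The paper's own proof skips over the same point when it invokes Theorem \ref{thm:comparison-F-G}. The remedy is this root bookkeeping in the statement, not a root-independence lemma, which is false.
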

	
	\begin{proof}
		By Lemma~\ref{lem:twist-to-conjugate-polynomial}, the code \(\mathcal C_k(F)^\rho\) is linearly
		equivalent to \(\mathcal C_k(F^\tau)\). Therefore
		$
		M\mathcal C_k(F)^\rho N=\mathcal C_k(G)$
		for some \(M,N\in\GL_\E(V)\) if and only if \(\mathcal C_k(F^\tau)\) and
		\(\mathcal C_k(G)\) are linearly equivalent. Applying Theorem~\ref{thm:comparison-F-G} to the pair \(F^\tau,G\), we obtain that this is
		equivalent to the existence of \(\lambda\in\F_q^\times\) such that
		\[
		F^\tau(Y)=\lambda^sG(\lambda^{-1}Y),
		\]
		or, only in the case \(n=2\), such that
		\[
		F^\tau(Y)=\lambda^s\widehat G(\lambda^{-1}Y).
		\]
		This proves the equivalence between \({\rm (i)}\) and \({\rm (ii)}\).
		
		It remains only to translate this condition in terms of roots. Since \(u_F\) is a root of \(F\),
		the element \(\rho(u_F)\) is a root of \(F^\tau\). Hence, by the same argument used in the proof
		of Theorem~\ref{thm:comparison-F-G}, the identity
		$
		F^\tau(Y)=\lambda^sG(\lambda^{-1}Y)
		$
		is equivalent to the existence of roots \(u_F\) of \(F\) and \(u_G\) of \(G\) such that
		$
		\rho(u_F)=\lambda u_G.
		$
		Likewise, in the additional case \(n=2\), the identity
		$
		F^\tau(Y)=\lambda^s\widehat G(\lambda^{-1}Y)
		$
		is equivalent to the existence of roots \(u_F\) of \(F\) and \(u_G\) of \(G\) such that
		$
		\rho(u_F)=\lambda u_G^{-1},
		$
		because \(u_G^{-1}\) is a root of the reciprocal polynomial \(\widehat G\). This proves
		\({\rm (ii)}\Leftrightarrow{\rm (iii)}\), and completes the proof.
	\end{proof}
	
	\begin{theorem}\label{thm:comparison-skew-model-semilinear}
		Let \(1\le k\le n-1\). The codes
		\[
		\Sk(F)\subseteq R_F
		\qquad\text{and}\qquad
		\Sk(G)\subseteq R_G
		\]
		are \(\Gamma\)-equivalent over \(\F_p\) if and only if there exist
		\[
		\tau\in \Gal(\F_q/\F_p)
		\qquad\text{and}\qquad
		\lambda\in \F_q^\times
		\]
		such that
		\[
		F^\tau(Y)=\lambda^sG(\lambda^{-1}Y),
		\]
		or, only in the case \(n=2\), such that
		\[
		F^\tau(Y)=\lambda^s\widehat G(\lambda^{-1}Y).
		\]
	\end{theorem}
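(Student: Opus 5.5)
The plan mirrors the proof of Theorem~\ref{thm:comparison-skew-model}. I first treat the range \(1\le k\le n/2\), which is essentially contained in Theorem~\ref{thm:comparison-F-G-semilinear}, and then reduce the range \(k>n/2\) to it by Delsarte duality.

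For \(1\le k\le n/2\), Proposition~\ref{prop:matrix-vs-semilinear}(ii) shows that \(\Sk(F)\) and \(\Sk(G)\) are \(\Gamma\)-equivalent over \(\F_p\) if and only if \(M\mathcal C_k(F)^\rho N=\mathcal C_k(G)\) for some \(M,N\in\GL_\E(V)\) and some \(\rho\in\Aut_{\F_p}(\E)\). Theorem~\ref{thm:comparison-F-G-semilinear}, applied with \(\tau:=\rho|_{\F_q}\), translates this into the stated polynomial condition for that \(\tau\). For the converse, every \(\tau\in\Gal(\F_q/\F_p)\) extends to some \(\rho\in\Aut_{\F_p}(\E)\), because restriction \(\Gal(\F_{q^s}/\F_p)\to\Gal(\F_q/\F_p)\) is surjective. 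The implication (ii)\(\Rightarrow\)(i) of Theorem~\ref{thm:comparison-F-G-semilinear} then yields the equivalence.

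For \(k>n/2\), necessarily \(n>2\), so the reciprocal alternative does not occur. The sufficiency direction does not use \(k\le n/2\). Indeed, Lemma~\ref{lem:twist-to-conjugate-polynomial} gives that \(\mathcal C_k(F)^\rho\) is linearly equivalent to \(\mathcal C_k(F^\tau)\). The construction in \((iii)\Rightarrow(i)\) of Theorem~\ref{thm:comparison-F-G} gives that \(\mathcal C_k(F^\tau)\) is linearly equivalent to \(\mathcal C_k(G)\) whenever \(F^\tau(Y)=\lambda^sG(\lambda^{-1}Y)\); this argument is valid for every \(k\).

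For necessity, I take Delsarte duals. If \(\mathcal D_G=U\mathcal D_F^\rho V\), then \(\mathcal D_G^\perp=V^{-\top}(\mathcal D_F^\perp)^\rho U^{-\top}\). Here I use \((\mathcal D^\rho)^\perp=(\mathcal D^\perp)^\rho\), which holds because the trace form satisfies \(\mathrm{Tr}(A^\rho B^\rho)=\rho(\mathrm{Tr}(AB))\) and \(\rho\) permutes \(\E\). By \cite[Proposition~5.9]{gomez2025adjoint}, \(\Sk(F)^\perp\) is equivalent to \(\mathcal S_{n,s,n-k}(\widehat F)\), and the same holds for \(G\). Since \(n-k<n/2\), the first part gives \(\tau\) and \(\mu\) with \(\widehat F^{\tau}(Y)=\mu^s\widehat G(\mu^{-1}Y)\). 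The operations \(P\mapsto\widehat P\), \(P\mapsto P^\tau\) and \(P\mapsto\mu^sP(\mu^{-1}Y)\) commute; explicitly, \(\widehat{P^\tau}=\widehat P^{\,\tau}\) because \(\tau\) acts on coefficients and \(P(0)\) becomes \(\tau(P(0))\). Applying the reciprocal therefore gives \(F^\tau(Y)=\lambda^sG(\lambda^{-1}Y)\) with \(\lambda=\mu^{-1}\).

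I expect the main obstacle to be the bookkeeping in this duality step. One must check that the equivalence in \cite[Proposition~5.9]{gomez2025adjoint} is linear (or at least that it composes correctly with \(\rho\)), and that the duality behaves well under the semilinear twist. If the cited equivalence turns out only to be semilinear, its field automorphism can be absorbed into \(\tau\), since all of these actions lie in the group \(\widehat{\mathcal G}\).
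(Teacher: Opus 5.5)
Your proposal is correct and follows the paper's proof essentially step for step: the range $1\le k\le n/2$ is handled by Proposition~\ref{prop:matrix-vs-semilinear} together with Theorem~\ref{thm:comparison-F-G-semilinear}, and the range $k>n/2$ by the same Delsarte-duality reduction via \cite[Proposition~5.9]{gomez2025adjoint} used for Theorem~\ref{thm:comparison-skew-model}. Your extra details only make explicit what the paper leaves implicit, namely extending $\tau$ to some $\rho$, and obtaining sufficiency for $k>n/2$ from Lemma~\ref{lem:twist-to-conjugate-polynomial}; one harmless slip is that the reciprocal does not commute with $P\mapsto\mu^sP(\mu^{-1}Y)$ but turns it into $P\mapsto\mu^{-s}P(\mu Y)$, which is exactly why your choice $\lambda=\mu^{-1}$ is correct.
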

	
	\begin{proof}
		If \(1\le k\le n/2\), the assertion follows immediately from
		Proposition~\ref{prop:matrix-vs-semilinear} and
		Theorem~\ref{thm:comparison-F-G-semilinear}. Assume now that \(k>n/2\). Then necessarily \(n>2\), and hence the alternative involving the
		reciprocal polynomial cannot occur. The same duality argument used in the proof of
		Theorem~\ref{thm:comparison-skew-model} applies verbatim in the semilinear setting, since
		\(\Gamma\)-equivalence over \(\F_p\) is preserved under Delsarte duality. More precisely, if
		\[
		\mathcal D_G=U\mathcal D_F^\rho V
		\]
		for some \(U,V\in \GL_n(\F_{q^s})\) and \(\rho\in \Aut_{\F_p}(\F_{q^s})\), then
		\[
		\mathcal D_G^\perp=V^{-\top}(\mathcal D_F^\perp)^\rho U^{-\top}.
		\]
		Using again \cite[Proposition~5.9]{gomez2025adjoint}, one reduces to the case \(n-k<n/2\), and
		obtains
		\[
		\widehat F^{\,\tau}(Y)=\mu^s\widehat G(\mu^{-1}Y)
		\]
		for some \(\tau\in \Gal(\F_q/\F_p)\) and \(\mu\in \F_q^\times\). The reciprocal involution commutes with coefficientwise conjugation. Moreover,
		for every monic polynomial \(P(Y)\in\F_q[Y]\) of degree \(s\) with
		\(P(0)\neq0\), one has
		\[
		\widehat{\mu^sP(\mu^{-1}Y)}
		=
		\mu^{-s}\widehat P(\mu Y).
		\]
		Applying the reciprocal involution to the previous identity therefore gives
		\[
		F^\tau(Y)=\mu^{-s}G(\mu Y).
		\]
		Equivalently, setting \(\lambda:=\mu^{-1}\), we obtain
		\[
		F^\tau(Y)=\lambda^sG(\lambda^{-1}Y).
		\]
		This proves the claim.
	\end{proof}

	\subsection{Enumerative consequences}

	As immediate consequences of the classification theorems, one obtains the number of
	inequivalent cyclic semifields and MRD codes $S_k(F)$ for fixed $q$, $k$, $s$ and $n$.
	
	Define
	\[
	\mathcal I_{q,s}:=
	\{F(Y)\in\F_q[Y]: F \text{ monic irreducible of degree }s\}.
	\]
	For $(\lambda,\tau) \in \F_q^\times \rtimes \Gal(\F_q/\F_p)$, we define a group action on the elements in $\mathcal{I}_{q,s}$ as follows:
	\begin{equation*}
		%		\label{eq:action.on.Iqs}
		(\lambda,\tau)\cdot F
		:=
		\lambda^sF^\tau(\lambda^{-1}Y),
		\qquad F\in\mathcal{I}_{q,s}.
	\end{equation*}
	Let $\iota$ stand for the reciprocal involution on $\mathcal{I}_{q,s}$, i.e.,
	\[
	\iota(F)=\widehat F, \qquad F\in\mathcal{I}_{q,s}.
	\]
	For $n\geq 2$, define groups $\mathcal{G}$ and $\widehat{\mathcal{G}}$ by
	\[
	\mathcal{G} = 
	\begin{cases}
		\F_q^\times , & n>2\\
		\F_q^\times \rtimes \langle \iota\rangle, & n=2,
	\end{cases}
	\]
	and 
	\[
	\widehat{\mathcal{G}} = \mathcal{G} \rtimes \Gal(\F_q/\F_p).
	\]
	By Theorem~\ref{thm:comparison-skew-model} and 
	Theorem~\ref{thm:comparison-skew-model-semilinear}, one can prove the following results directly.
	\begin{theorem}\label{thm:exact-number-inequivalent}
		Assume that
		\[
		q=p^r,\qquad s>1,\qquad n>1, \qquad 1\le k\le n-1.
		\]
		Let 
		\[
		\varepsilon_n=\begin{cases}
			1, & n>2;\\
			2, & n=2.
		\end{cases}
		\]
		\begin{itemize}
			\item The number of pairwise linearly inequivalent codes \(\Sk(F)\) obtained from $\F_{q^n}[X;\sigma]$ is exactly the number of $\mathcal{G}$-orbits on $\mathcal{I}_{q,s}$. In particular, it is at least
			\[
			\left\lceil \frac{|\mathcal{I}_{q,s}|}{\varepsilon_n (q-1)}\right\rceil .
			\]
			\item The number of pairwise \(\Gamma\)-inequivalent codes \(\Sk(F)\) obtained from $\F_{q^n}[X;\sigma]$ is exactly the number of $\widehat{\mathcal{G}}$-orbits on $\mathcal{I}_{q,s}$. In particular, it is at least
			\[
			\left\lceil \frac{|\mathcal{I}_{q,s}|}{\varepsilon_n r(q-1)}\right\rceil .
			\]
		\end{itemize}
	\end{theorem}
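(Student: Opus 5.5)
The plan is to translate the two classification theorems into statements about orbits, and then bound the orbit count from below by orbit sizes.

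\textbf{Orbit identification.} First I check that the formulas really define a group action on $\mathcal I_{q,s}$. For $(\lambda,\tau)\cdot F=\lambda^sF^\tau(\lambda^{-1}Y)$:
\begin{itemize}
\item if $u$ is a root of $F$, then $\lambda\tau(u)$ is a root of the image;
\item so the image is again monic, irreducible of degree $s$, and has nonzero constant term, because $F\neq Y$ when $s>1$;
\item composing two such maps gives $(\lambda\tau(\lambda'),\tau\tau')$, which is the semidirect product law in $\F_q^\times\rtimes\Gal(\F_q/\F_p)$.
\end{itemize}
For the reciprocal involution $\iota$ I use the identity $\widehat{\mu^sP(\mu^{-1}Y)}=\mu^{-s}\widehat P(\mu Y)$, already recorded in the proof of Theorem~\ref{thm:comparison-skew-model-semilinear}. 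It shows that $\iota$ normalises the $\F_q^\times$-action, sending $\lambda$ to $\lambda^{-1}$. Together with the fact that $\iota$ commutes with $\tau$, this makes $\mathcal G$ and $\widehat{\mathcal G}$ well-defined groups acting on $\mathcal I_{q,s}$.

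Next I match equivalence with orbits. By Theorem~\ref{thm:comparison-skew-model}, $\Sk(F)$ and $\Sk(G)$ are linearly equivalent if and only if $F=\lambda\cdot G$, or (only for $n=2$) $F=\lambda\cdot\iota(G)$. This is exactly the condition that $F$ lies in the $\mathcal G$-orbit of $G$, once I note that every element of $\mathcal G$ can be written as $\lambda$ or $\lambda\iota$. Likewise, Theorem~\ref{thm:comparison-skew-model-semilinear} states that $\Gamma$-equivalence holds if and only if $F^\tau\in\mathcal G\cdot G$ for some $\tau$. Since the action of $\tau$ is invertible and normalises $\mathcal G$, this is the same as $F\in\widehat{\mathcal G}\cdot G$. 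Consequently the map $F\mapsto\Sk(F)$ induces a bijection between orbits and equivalence classes, in both the linear and the $\Gamma$ case.

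\textbf{Lower bounds.} Every orbit has size at most the order of the group. Here $|\mathcal G|\le\varepsilon_n(q-1)$ and $|\widehat{\mathcal G}|\le\varepsilon_n r(q-1)$, using $|\Gal(\F_q/\F_p)|=r$. Hence the number of orbits is at least $|\mathcal I_{q,s}|/|\text{group}|$, and because it is an integer it is at least the stated ceiling.

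\textbf{Main obstacle.} The only delicate point is the group-theoretic bookkeeping: checking that $\iota$ and the Galois action genuinely form the claimed semidirect products. Without this, the union of the relations ``$F=\lambda\cdot G$'' and ``$F=\lambda\cdot\widehat G$'' need not be closed under composition. The identity above settles it, since it shows that $\langle\F_q^\times,\iota\rangle$ has order at most $2(q-1)$ and that each of its elements is either $\lambda$ or $\lambda\iota$.
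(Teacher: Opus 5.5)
Your proposal is correct and follows the same route as the paper. The paper obtains the orbit description directly from Theorem~\ref{thm:comparison-skew-model} and Theorem~\ref{thm:comparison-skew-model-semilinear}, and the lower bound by dividing $|\mathcal I_{q,s}|$ by the group order. Your explicit checks — the semidirect-product composition law, that $\iota$ inverts the $\F_q^\times$-action, and that the Galois twist normalises $\mathcal G$ — fill in bookkeeping the paper leaves implicit.
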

	
	As an immediate consequence, by using
	Proposition~\ref{prop:isotopy-equivalence-right-nucleus}, in the semifield case \(k=1\) we obtain a
	lower bound on the number of isotopy classes of cyclic semifields.
	
	\begin{corollary}\label{cor:lower-bound-nonisotopic-cyclic-semifields}
		Assume that
		\[
		q=p^r,\qquad s>1,\qquad n\ge 2.
		\]
		Then the number of
		pairwise non-isotopic cyclic semifields of order \(q^{ns}\), with left and middle nuclei
		isomorphic to \(\F_{q^n}\) and right nucleus isomorphic to \(\F_{q^s}\), is exactly the number of $\widehat{\mathcal{G}}$-orbits on $\mathcal{I}_{q,s}$. In particular, this number is at least
		\[
		\left\lceil \frac{|\mathcal{I}_{q,s}|}{\varepsilon_n r(q-1)}\right\rceil
		\]
	\end{corollary}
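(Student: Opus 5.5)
The corollary specializes Theorem~\ref{thm:exact-number-inequivalent} to $k=1$ and translates the count from codes to semifields. First we show that the objects being counted are exactly the spread sets $\mathcal S_1(F)$, $F\in\mathcal I_{q,s}$. Let $f\in R=\F_{q^n}[X;\sigma]$ be a monic irreducible polynomial of degree $s$ with $f\notin Z(R)$. Then $f\neq X$ (as $s\ge 2$), and its bound is $F(X^n)$ with $F\in\mathcal I_{q,s}$. By the discussion preceding Remark~\ref{rem:sheekey-general-family}, the spread set of $S_f$, written over the right nucleus $E_f\cong\F_{q^s}$, is identified with $\mathcal S_{n,s,1}(F)\subseteq R_F\cong M_n(\F_{q^s})$. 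Conversely, every $F\in\mathcal I_{q,s}$ occurs. Since $F(X^n)$ is nonconstant, it has an irreducible right divisor $f$, which has degree $s$ and bound $F(X^n)$. Proposition~\ref{prop:invariantsSNsk} then gives left and middle nuclei $\F_{q^n}$ and right nucleus $\F_{q^s}$, as the statement requires.

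Next we compare isotopy with equivalence. All these semifields have right nucleus isomorphic to $\F_{q^s}$. Proposition~\ref{prop:isotopy-equivalence-right-nucleus} says that $S_f$ and $S_g$ are isotopic if and only if their spread sets in $M_n(\F_{q^s})$ are $\Gamma$-equivalent over $\F_p$. The spread sets are only determined up to the choice of $\mathcal M_F$, but this causes no trouble. By Skolem--Noether, different identifications differ by conjugation, which lies in $\Gamma$. Therefore isotopy classes of these cyclic semifields are in bijection with $\Gamma$-equivalence classes of the codes $\mathcal S_1(F)$, $F\in\mathcal I_{q,s}$.

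Theorem~\ref{thm:exact-number-inequivalent}, with $k=1\le n-1$, now gives that the number of classes equals the number of $\widehat{\mathcal G}$-orbits on $\mathcal I_{q,s}$. For the lower bound, note that $|\widehat{\mathcal G}|$ is at most $\varepsilon_n r(q-1)$. Each orbit therefore has size at most this number, and the number of orbits is at least $\bigl\lceil |\mathcal I_{q,s}|/(\varepsilon_n r(q-1))\bigr\rceil$.

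The only real subtlety is the first step, namely that $S_f$ depends only on the bound $F$ up to isotopy. Two irreducible right divisors $f,f'$ of the same $F(X^n)$ both yield spread sets equal to the image of $\mathcal S_1(F)$ under $\Lambda_f$ and $\Lambda_{f'}$ respectively. These images are conjugate by Skolem--Noether, so no extra identification is needed. One must also check the hypothesis $f\notin Z(R)$. Since $s\ge2$ and $n\ge2$, the degree-$s$ polynomial $f$ is central only when $n\mid s$ and $f=F'(X^n)$. This is the associative case, where the semifield is $R_F$ of degree $ns$, not $s$, so that case is excluded.
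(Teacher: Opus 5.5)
Your proposal is correct and follows the paper's route. The paper specializes Theorem~\ref{thm:exact-number-inequivalent} to $k=1$ and uses Proposition~\ref{prop:isotopy-equivalence-right-nucleus} to identify isotopy of the cyclic semifields with $\Gamma$-equivalence of their spread sets $\mathcal S_1(F)$; your extra checks (every $F\in\mathcal I_{q,s}$ arises, and the choices of $f$ and $\mathcal M_F$ do not matter) spell out what the paper leaves implicit.

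One small correction to your last remark: a nonconstant central $f=F'(X^n)$ is never irreducible when $n\ge 2$. If $F'\neq Y$ is irreducible, then $R/Rf\cong M_n(\F_{q^{s/n}})$ has zero divisors; otherwise $f$ visibly factors. So that case simply does not occur, rather than being an ``associative case''.
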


	\begin{remark}
		\begin{enumerate}[label=(\alph*)]
			\item When $n>2$, the number of pairwise non-isotopic cyclic semifields presented in Corollary \ref{cor:lower-bound-nonisotopic-cyclic-semifields} is exactly the upper bound obtained in \cite[Theorem 2]{lavrauw2013semifields}.
			\item %		\label{rem:n-equals-2}
			The case \(n=2\) has been previously studied in \cite{Johnson2009generalization}, where the authors
			consider cyclic semifields which are two-dimensional over their left nucleus, with right and middle
			nuclei isomorphic to \(\F_{q^2}\). In the right-nucleus convention used in the present paper, these
			correspond to the opposite semifields of the algebras considered in
			\cite{Johnson2009generalization}; see also the discussion in \cite[Section~8]{lavrauw2013semifields}.
			
			More precisely, if \(q=p^r\), then the lower bound obtained in
			\cite{Johnson2009generalization} for the number of isotopy classes $A(q,2,s)$ of cyclic semifields in the case \(n=2\) reads as
			\[
			A(q,2,s)\ge
			\frac{q^s-\theta}{2srq(q-1)},
			\]
			where \(\theta\) denotes the number of elements of \(\F_{q^s}\) contained in a proper subfield.
			Since
			\[
			|\mathcal{I}_{q,s}|=\frac{q^s-\theta}{s},
			\]
			the bound provided in Corollary \ref{cor:lower-bound-nonisotopic-cyclic-semifields}, for \(n=2\), gives
			\[
			A(q,2,s)\ge
			\left\lceil
			\frac{|\mathcal{I}_{q,s}|}{2r(q-1)}
			\right\rceil
			=
			\left\lceil
			\frac{q^s-\theta}{2sr(q-1)}
			\right\rceil .
			\]
			Thus, in the rank-two case, our result recovers the known exponential
			growth in \(s\) and improves the previous lower bound by a factor \(q\). The additional factor \(2\) in the denominator is precisely the contribution of the reciprocal involution.
		\end{enumerate}
	\end{remark}

	\section{The full automorphism group and the autotopism group}
	\label{sec:full-autotopism}
	
	In this section, we determine the full automorphism group of the code \(\Sk(F)\). In the
	special case \(k=1\), this also yields the full autotopism group of the associated cyclic
	semifield. Therefore, we write \[ \Aut(\mathcal S_k(F)) = \left\{ (A,B,\rho)\in \GL_n(\E_F)\times \GL_n(\E_F)\times \Aut(\E_F) : A\, \cS_k(F)^{\rho}\,B=\cS_k(F) \right\}, \] and similarly for the group of linear autotopisms of $\Sk(F)$, we have \[ \Aut_{\E}(\cS_k(F)) := \left\{ (A,B)\in \GL_n(\E)\times \GL_n(\E) : A\,\mathcal \cS_k(F)\,B= \cS_k(F) \right\}. \]
	
	By the correspondence between isotopy of semifields and equivalence of the
	associated spread sets described in Section~\ref{subsec:equivalencespreads}, in the case
	\(k=1\) these groups identify with the autotopism groups of the cyclic semifield. More
	precisely, if \(\mathbb S_F\) denotes the cyclic semifield whose spread set is \(\cS_1(F)\), then
	\eqref{eq:reformulationautotopism} and \eqref{eq:reformulationlinearautotopism} yield
	canonical isomorphisms
	\[
	\Aut(\mathbb S_F)\cong \Aut(\cS_1(F))
	\]
	and
	\[
	\Aut_{\E_F}(\mathbb S_F)\cong \Aut_{\E_F}(\cS_1(F)).
	\]
	
	Our first observation is that the two components of an automorphism pair necessarily lie in the unit
	group of the centralizer \(\Cen_{R_F}(\F_{q^n})\). The following result can be obtained by Theorem~\ref{th:polydescription} and Lemma~\ref{lem:equiv-centralizer}.
	
	\begin{lemma}\label{lem:mapsincentraliser}
		Let \(\bar g,\bar h\in R_F^\times\) be such that
		\[
		\bar g\,\Sk(F)\,\bar h=\Sk(F).
		\]
		Then \(\bar g,\bar h\in \Cen_{R_F}(\F_{q^n})^\times\).
	\end{lemma}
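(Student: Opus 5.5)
The plan is to argue directly with idealisers inside \(R_F\), which is the skew-polynomial mirror of Lemma~\ref{lem:equiv-centralizer}; no transfer of \(\bar g,\bar h\) is really needed. Write \(\mathcal C:=\Sk(F)\) and assume \(\bar g\,\mathcal C\,\bar h=\mathcal C\) with \(\bar g,\bar h\in R_F^\times\).

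The key step is to show that conjugation by \(\bar g\) preserves the left idealiser and conjugation by \(\bar h\) preserves the right idealiser. Take \(\bar a\in\lid(\mathcal C)\). Then
\[
(\bar g\bar a\bar g^{-1})\,\mathcal C=\bar g\bar a\bar g^{-1}\bar g\,\mathcal C\,\bar h=\bar g\,(\bar a\,\mathcal C)\,\bar h\subseteq \bar g\,\mathcal C\,\bar h=\mathcal C .
\]
Here the first equality uses \(\mathcal C=\bar g\,\mathcal C\,\bar h\). Hence \(\bar g\,\lid(\mathcal C)\,\bar g^{-1}\subseteq\lid(\mathcal C)\). Both sides have the same finite cardinality, so equality holds. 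The symmetric computation gives \(\bar h^{-1}\,\rid(\mathcal C)\,\bar h=\rid(\mathcal C)\).

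By Proposition~\ref{prop:invariantsSNsk}, valid since \(s>1\) and \(1\le k\le n-1\), both idealisers equal \(\{\alpha+RF(X^n):\alpha\in\F_{q^n}\}\). Therefore \(\bar g\,\F_{q^n}\,\bar g^{-1}=\F_{q^n}\), which is the same as \(\bar g\,\F_{q^n}=\F_{q^n}\,\bar g\). Likewise \(\bar h^{-1}\,\F_{q^n}\,\bar h=\F_{q^n}\), which gives \(\F_{q^n}\,\bar h=\bar h\,\F_{q^n}\). By the definition of \(\Cen_{R_F}(\F_{q^n})\), this places \(\bar g\) and \(\bar h\) in \(\Cen_{R_F}(\F_{q^n})\). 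Since both are invertible, they lie in \(\Cen_{R_F}(\F_{q^n})^\times\).

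Alternatively, one can transport the statement through \(\Psi_{w_F}\) of Theorem~\ref{th:polydescription}: apply Lemma~\ref{lem:equiv-centralizer} with \(G=F\), then pull back using \(\Psi_{w_F}(\alpha)=\alpha\,\id_V\). The direct argument avoids this bookkeeping.

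The only delicate point is the inclusion-to-equality step. It holds because conjugation is a bijection on the finite ring \(R_F\). A secondary check is that Proposition~\ref{prop:invariantsSNsk} applies for every \(k\le n-1\), not only \(k\le n/2\), which it does by its stated hypotheses. I do not expect any real obstacle beyond these two points.
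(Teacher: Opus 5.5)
Your proof is correct and follows essentially the paper's route. The paper derives this lemma from Theorem~\ref{th:polydescription} together with Lemma~\ref{lem:equiv-centralizer}, whose proof is the same idealiser-conjugation argument you give, only carried out in \(\End_{\E}(V)\) after transport by \(\Psi_{w_F}\); running it directly in \(R_F\), as you do, is equivalent and avoids the transport bookkeeping.
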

	
	\begin{lemma}\label{lem:corrected-conjugate-of-X}
		Assume that \(s>1\) and \(1\le k\le n/2\), and set
		$
		\mathcal C:=\Sk(F)\subseteq R_F$.
		Let \(\bar g\in \Cen_{R_F}(\F_{q^n})^\times\), and set
		\[
		\bar b:=\bar gX\bar g^{-1}.
		\]
		Assume that either
		\[
		|\mathcal C\cap \bar b\,\mathcal C|=q^{n(sk-1)}
		\]
		or
		\[
		|\mathcal C\cap \mathcal C\,\bar b|=q^{n(sk-1)}.
		\]
		Then one of the following alternatives holds:
		\[
		\bar b=\beta X+RF(X^n)
		\]
		for some \(\beta\in\F_{q^n}^\times\), or
		\[
		n=2
		\qquad\text{and}\qquad
		\bar b=\beta X^{-1}+RF(X^2)
		\]
		for some \(\beta\in\F_{q^2}^\times\). Moreover, the second alternative can occur only if \(s=2\) and
		$
		F(Y)=Y^2-\nu$,
		for some \(\nu\in\F_q^\times\).
	\end{lemma}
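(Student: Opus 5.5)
The plan is to pass to the semilinear model through the isomorphism \(\Psi_w\) of Theorem~\ref{th:polydescription} and then apply Proposition~\ref{prop:semilinear-collapse} to the image of \(\bar b\). Write \(T:=T_{w,\sigma}=\Psi_w(X+RF(X^n))\) and \(M:=\Psi_w(\bar g)\). By Proposition~\ref{prop:descriptioncentraliser} and Lemma~\ref{lem:units-Cen}, \(\bar g=z(X^n)X^t+RF(X^n)\) with \(z(X^n)+RF(X^n)\) invertible. By Proposition~\ref{prop:semilinear-centralizer}, \(M=z(u)T^t\) is then an invertible \(\sigma^t\)-semilinear map for the \(\K\)-structure \eqref{eq:vectorspaceconstruction}.

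\emph{Step 1.} Set \(S:=\Psi_w(\bar b)=MTM^{-1}\). As a composition of maps that are \(\sigma^t\)-, \(\sigma\)- and \(\sigma^{-t}\)-semilinear, \(S\) is an invertible \(\sigma\)-semilinear element of \(\End_{\E}(V)\). This also agrees with Lemma~\ref{lm:shapeconjugatesalphax}, which gives \(\bar b=y(X^n)X+RF(X^n)\) for some \(y\).

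\emph{Step 2.} Because \(\Psi_w\) is a ring isomorphism, the hypothesis \(|\mathcal C\cap\bar b\,\mathcal C|=q^{n(sk-1)}\), or the right-hand version, transfers to \(|\mathcal C_k(F)\cap S\,\mathcal C_k(F)|=q^{n(sk-1)}\), respectively \(|\mathcal C_k(F)\cap \mathcal C_k(F)S|=q^{n(sk-1)}\). Proposition~\ref{prop:semilinear-collapse} applies since \(k\le n/2\), and it gives \(S\in\K^\times T\), or, when \(n=2\), \(S\in \K^\times T\cup\K^\times T^{-1}\). Pulling back by \(\Psi_w^{-1}\), which is \(\K\)-linear and sends \(T\mapsto X\) and \(T^{-1}\mapsto X^{-1}\), yields \(\bar b=\beta X+RF(X^n)\), or, only for \(n=2\), \(\bar b=\beta X^{-1}+RF(X^2)\), with \(\beta\neq 0\).

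\emph{Step 3 (restricting the exceptional case).} Suppose \(n=2\) and \(\bar g X\bar g^{-1}=\beta X^{-1}\). Squaring gives \(\bar gX^2\bar g^{-1}=(\beta X^{-1})^2\). The element \(X^2+RF(X^2)\) is central, so the left-hand side equals \(X^2\). For the right-hand side, \(X^{-1}\beta=\sigma^{-1}(\beta)X^{-1}\), hence \((\beta X^{-1})^2=\beta\sigma^{-1}(\beta)X^{-2}=\N_{\F_{q^2}/\F_q}(\beta)X^{-2}\). Therefore \(X^4=\nu\) in \(R_F\) with \(\nu:=\N(\beta)\in\F_q^\times\). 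This relation lives in the centre \(\E_F\cong\F_q[Y]/(F(Y))\), so \(F(Y)\) divides \(Y^2-\nu\). Since \(F\) is irreducible of degree \(s\ge 2\), it follows that \(s=2\) and \(F(Y)=Y^2-\nu\).

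The only delicate point is checking that \(S\) really is \(\sigma\)-semilinear, so that Proposition~\ref{prop:semilinear-collapse} applies. This reduces to knowing that units of \(\Cen_{R_F}(\K)\) correspond to homogeneous semilinear maps of a single type \(\sigma^t\), which is exactly what Proposition~\ref{prop:semilinear-centralizer} provides. Everything else is bookkeeping through \(\Psi_w\).
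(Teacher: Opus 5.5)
Your proof is correct. Steps 1 and 2 follow the paper's argument. The paper also just notes that $\Psi_w(\bar b)$ is an invertible $\sigma$-semilinear map and invokes Proposition~\ref{prop:semilinear-collapse}; you additionally spell out why $MTM^{-1}$ is $\sigma$-semilinear.

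For the final assertion you take a different, somewhat cleaner route. The paper writes $\bar g=w(X^2)+RF(X^2)$ and computes $\bar gX\bar g^{-1}=w(X^2)\bigl(w^\sigma(X^2)\bigr)^{-1}X$. Comparing with $\beta X^{-1}$ gives $w(Y)\bigl(w^\sigma(Y)\bigr)^{-1}=\beta Y^{-1}$ in $\F_{q^2}[Y]/(F(Y))$, and then it applies $\sigma$ and multiplies to reach $Y^2=\beta\sigma(\beta)$. You instead square the relation $\bar gX\bar g^{-1}=\beta X^{-1}$ and use only that conjugation by a unit fixes the central element $X^2+RF(X^2)$. This gives $X^4=\N_{\F_{q^2}/\F_q}(\beta)$ directly in $\E_F$.

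Both are the same norm computation in different guises. Yours, however, uses nothing about $\bar g$ beyond invertibility, so it automatically covers $\bar g=w(X^2)X$. The paper's write-up tacitly omits that case, though it reduces to the case $t=0$ because conjugation by $X$ fixes $X$. In exchange, the paper's computation records the explicit equation $w(Y)\bigl(w^\sigma(Y)\bigr)^{-1}=\beta Y^{-1}$ that any such conjugator must satisfy.
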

	
	\begin{proof}
		As $\bar{b}$ defines an invertible $\sigma$-semilinear map under the ring isomorphism defined in Theorem \ref{th:polydescription}, the claim about the shape of $\bar{b}$ follows directly from Proposition \ref{prop:semilinear-collapse}.
		
		It remains to prove the final assertion. Suppose that the second alternative
		occurs. Since
		$
		\bar g\in \Cen_{R_F}(\F_{q^2})^\times,$
		we may write
		\[
		\bar g=w(X^2)+RF(X^2),
		\]
		where \(w(Y)\in\F_{q^2}[Y]\) is invertible modulo \(F(Y)\). Then
		\[
		\bar gX\bar g^{-1}
		=
		w(X^2)\bigl(w^\sigma(X^2)\bigr)^{-1}X.
		\]
		On the other hand, by assumption,
		\[
		\bar gX\bar g^{-1}=\beta X^{-1}+RF(X^2).
		\]
		Therefore, in \(\F_{q^2}[Y]/(F(Y))\), we have
		\[
		w(Y)\bigl(w^\sigma(Y)\bigr)^{-1}
		=
		\beta Y^{-1}.
		\]
		Applying \(\sigma\), we also get
		\[
		w^\sigma(Y)w(Y)^{-1}
		=
		\sigma(\beta)Y^{-1}.
		\]
		Multiplying the two identities gives
		\[
		1=\beta\sigma(\beta)Y^{-2}.
		\]
		Hence
		\[
		Y^2=\beta\sigma(\beta)
		\]
		in \(\F_{q^2}[Y]/(F(Y))\). Since
		$
		\beta\sigma(\beta)\in\F_q^\times,$
		the class of \(Y^2\) modulo \(F(Y)\) belongs to \(\F_q^\times\). Thus \(F(Y)\) divides \(Y^2-\nu\), where
		\[
		\nu:=\beta\sigma(\beta)\in\F_q^\times.
		\]
		Since \(F\) is monic and \(\deg F=s>1\), this forces
		\[
		s=2
		\]
		and
		\[
		F(Y)=Y^2-\nu.
		\]
		The proof is complete.
	\end{proof}
	We next show that an automorphism pair forces the conjugates of \( X+RF(X^n)\) to retain the
	same shape.
	
	\begin{proposition}\label{prop:structuremaps}
		Assume that \(1\le k\le n/2\). Let
		\(\bar g,\bar h\in \Cen_{R_F}(\F_{q^n})^\times\) be such that
		\[
		\bar g\,\Sk(F)\,\bar h=\Sk(F).
		\]
		Then one of the following alternatives holds:
		\[
		\bar gX\bar g^{-1}=\beta X+RF(X^n)
		\]
		for some \(\beta\in\F_{q^n}^\times\), or
		\[
		n=2
		\qquad\text{and}\qquad
		\bar gX\bar g^{-1}=\beta X^{-1}+RF(X^2)
		\]
		for some \(\beta\in\F_{q^2}^\times\).
		Similarly, one of the following alternatives holds:
		\[
		\bar h^{-1}X\bar h=\beta' X+RF(X^n)
		\]
		for some \(\beta'\in\F_{q^n}^\times\), or
		\[
		n=2
		\qquad\text{and}\qquad
		\bar h^{-1}X\bar h=\beta' X^{-1}+RF(X^2)
		\]
		for some \(\beta'\in\F_{q^2}^\times\). Moreover, each of the alternatives involving \(X^{-1}\) can occur only in the
		exceptional case
		\[
		n=2,\qquad s=2,\qquad F(Y)=Y^2-\nu
		\]
		for some \(\nu\in\F_q^\times\).
	\end{proposition}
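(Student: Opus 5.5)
The plan is to reduce everything to Lemma~\ref{lem:corrected-conjugate-of-X}, which already contains both the dichotomy and the exceptional-case restriction. It therefore suffices to show that $\bar b:=\bar gX\bar g^{-1}$ satisfies
\[
|\mathcal C\cap \bar b\,\mathcal C|=q^{n(sk-1)},
\]
and that $\bar b':=\bar h^{-1}X\bar h$ satisfies
\[
|\mathcal C\cap \mathcal C\,\bar b'|=q^{n(sk-1)},
\]
where $\mathcal C=\Sk(F)$. Lemma~\ref{lem:corrected-conjugate-of-X} requires the conjugating element to lie in $\Cen_{R_F}(\F_{q^n})^\times$. This holds for $\bar g$ by hypothesis. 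For $\bar h^{-1}$ it holds because $\Cen_{R_F}(\F_{q^n})^\times$ is closed under inversion, as observed before Lemma~\ref{lem:units-Cen}. Note that $\bar h^{-1}X\bar h=\bar h^{-1}X(\bar h^{-1})^{-1}$, so the lemma applies with $\bar h^{-1}$ in place of $\bar g$.

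For the first count, start from $|\mathcal C\cap X\mathcal C|=q^{n(sk-1)}$. This holds because no reduction modulo $F(X^n)$ occurs for degrees below $sk\le ns$. Indeed, $X\mathcal C$ consists of the classes $\sum_{i=1}^{sk}\beta_iX^i$, so the intersection is spanned by $X,\dots,X^{sk-1}$ and has $\F_{q^n}$-dimension $sk-1$. Now apply the map $Z\mapsto \bar g Z\bar h$, which is a bijection of $R_F$ and sends $\mathcal C$ onto $\mathcal C$. The image of $X\mathcal C$ is
\[
\bar gX\mathcal C\bar h=(\bar gX\bar g^{-1})\,\bar g\mathcal C\bar h=\bar b\,\mathcal C.
\]
Hence $\bar g(\mathcal C\cap X\mathcal C)\bar h=\mathcal C\cap\bar b\,\mathcal C$, and the cardinality is preserved. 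This is exactly the computation in the proof of $(i)\Rightarrow(iii)$ in Theorem~\ref{thm:comparison-F-G}.

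For the second count, use symmetrically $|\mathcal C\cap \mathcal C X|=q^{n(sk-1)}$. Since $\mathcal CX$ is the set of classes $\sum_{i=1}^{sk}\gamma_iX^i$ (use $\alpha X^i X=\alpha X^{i+1}$), the intersection is again spanned by $X,\dots,X^{sk-1}$. Applying the same bijection gives
\[
\bar g\mathcal CX\bar h=\bar g\mathcal C\bar h\,(\bar h^{-1}X\bar h)=\mathcal C\,\bar b'.
\]
Therefore $|\mathcal C\cap\mathcal C\bar b'|=q^{n(sk-1)}$.

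Lemma~\ref{lem:corrected-conjugate-of-X} now yields the two dichotomies and the restriction $s=2$, $F=Y^2-\nu$ for each $X^{-1}$ alternative. There is one point to check: in the second alternative for $\bar b'$, the lemma's final assertion is proved from the form $\bar g=w(X^2)$. That form is available here, since an element of $\Cen_{R_F}(\F_{q^2})^\times$ whose conjugate of $X$ is $\sigma$-semilinear must have $t=0$ in Proposition~\ref{prop:descriptioncentraliser}. Otherwise, for $t=1$, replace $\bar g$ by $\bar gX^{-1}$, which does not change $\bar gX\bar g^{-1}$. I expect this bookkeeping to be the only delicate step; everything else is transport of cardinalities under the bijection.
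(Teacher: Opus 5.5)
Your argument is correct and is the paper's own: you transport $|\mathcal C\cap X\mathcal C|=q^{n(sk-1)}$ under the bijection $Z\mapsto \bar gZ\bar h$ and apply Lemma~\ref{lem:corrected-conjugate-of-X}; for $\bar h$ you do the same with $|\mathcal C\cap \mathcal C X|$ and with $\bar h^{-1}\in\Cen_{R_F}(\F_{q^n})^\times$ in place of $\bar g$, which simply makes explicit the right-hand analogue the paper leaves to the reader. One correction to your last paragraph: $t=0$ is not forced, since by Lemma~\ref{lm:shapeconjugatesalphax} every unit of $\Cen_{R_F}(\F_{q^n})$ (e.g.\ $X$ itself) conjugates $X$ into the form $z(X^n)X$; however, your fallback replacement $\bar g\mapsto \bar gX^{-t}$, which leaves $\bar gX\bar g^{-1}$ unchanged, is valid and is precisely what justifies the form $\bar g=w(X^2)$ that the lemma's proof uses without comment.
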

	
	\begin{proof}
		Set
		\[
		\mathcal S:=\Sk(F)
		\qquad\text{and}\qquad
		\bar u:=X+RF(X^n).
		\]
		Since
		\[
		\mathcal S
		=
		\left\{
		\sum_{i=0}^{sk-1}\alpha_iX^i+RF(X^n):
		\alpha_i\in\F_{q^n}
		\right\},
		\]
		we have
		\[
		\bar u\,\mathcal S
		=
		\left\{
		\sum_{i=0}^{sk-1}\sigma(\alpha_i)X^{i+1}+RF(X^n):
		\alpha_i\in\F_{q^n}
		\right\}.
		\]
		As \(1\le k\le n/2\), we have \(sk\le ns-1\), and hence no reduction modulo
		\(F(X^n)\) occurs in these products. Therefore
		\[
		\mathcal S\cap \bar u\,\mathcal S
		=
		\langle X+RF(X^n),\dots,X^{sk-1}+RF(X^n)\rangle_{\F_{q^n}},
		\]
		and so
		\[
		|\mathcal S\cap \bar u\,\mathcal S|
		=
		q^{n(sk-1)}.
		\]
		
		Now
		\[
		\bar g(\mathcal S\cap \bar u\,\mathcal S)\bar h
		=
		\bar g\mathcal S\bar h
		\cap
		\bar g\bar u\,\mathcal S\bar h.
		\]
		Since \(\bar g\mathcal S\bar h=\mathcal S\), and since
		\[
		\bar g\bar u\,\mathcal S\bar h
		=
		(\bar g\bar u\bar g^{-1})\,\bar g\mathcal S\bar h
		=
		(\bar g\bar u\bar g^{-1})\,\mathcal S,
		\]
		we obtain
		\[
		\bar g(\mathcal S\cap \bar u\,\mathcal S)\bar h
		=
		\mathcal S\cap (\bar g\bar u\bar g^{-1})\,\mathcal S.
		\]
		Since left multiplication by \(\bar g\) and right multiplication by \(\bar h\)
		are bijections, it follows that
		\[
		\left|
		\mathcal S\cap (\bar g\bar u\bar g^{-1})\,\mathcal S
		\right|
		=
		|\mathcal S\cap \bar u\,\mathcal S|
		=
		q^{n(sk-1)}.
		\]
		Applying Lemma~\ref{lem:corrected-conjugate-of-X} to
		\[
		\bar b:=\bar gX\bar g^{-1},
		\]
		we obtain either
		\[
		\bar gX\bar g^{-1}
		=
		\beta X+RF(X^n)
		\]
		for some \(\beta\in\F_{q^n}^\times\), or
		\[
		n=2
		\qquad\text{and}\qquad
		\bar gX\bar g^{-1}
		=
		\beta X^{-1}+RF(X^2)
		\]
		for some \(\beta\in\F_{q^2}^\times\). Moreover, by the same lemma, the second
		alternative can occur only if
		\[
		s=2
		\qquad\text{and}\qquad
		F(Y)=Y^2-\nu
		\]
		for some \(\nu\in\F_q^\times\).
		
		The corresponding assertion for $\bar h$ follows analogously, by using the right-hand analogue of the previous argument.
	\end{proof}
	
	We now classify the invertible elements of \(\Cen_{R_F}(\F_{q^n})\) satisfying the relation
	obtained in Proposition~\ref{prop:structuremaps}.
	
	\begin{lemma}\label{lem:G-description}
		Let \(\bar g\in \Cen_{R_F}(\F_{q^n})^\times\). Suppose that there exists \(\lambda\in\F_{q^n}^\times\) such that
		\[
		\bar g X=\lambda X\,\bar g.
		\]
		Then there exist \(\bar c\in \E_F^\times\), \(\gamma\in\F_{q^n}^\times\), and
		\(t\in\{0,\dots,n-1\}\) such that
		\[
		\bar g=\bar c\,\gamma X^t.
		\]
	\end{lemma}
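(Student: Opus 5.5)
By Proposition~\ref{prop:descriptioncentraliser} we may write $\bar g=z(X^n)X^t+RF(X^n)$ with $z(Y)\in\F_{q^n}[Y]$ and $t\in\{0,\dots,n-1\}$. By Lemma~\ref{lem:units-Cen}, invertibility of $\bar g$ means $\gcd(z,F)=1$. The plan is to turn the relation $\bar gX=\lambda X\bar g$ into a functional equation for $z$ in the commutative ring $\F_{q^n}[Y]/(F(Y))$, and then to show that this equation forces $z(Y)$ to be an $\F_q[Y]$-multiple of a constant modulo $F$.

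We compute both sides in $R$ using $X\alpha=\sigma(\alpha)X$, and recall that $X^n$ is central. This gives $z(X^n)X^tX=z(X^n)X^{t+1}$ and $\lambda X z(X^n)X^t=\lambda z^{\sigma}(X^n)X^{t+1}$. Since $X^{t+1}$ is invertible in $R_F$, we obtain
\[
z(X^n)=\lambda\, z^{\sigma}(X^n)\quad\text{in } R_F .
\]
Through the injective homomorphism $\varphi$ from the proof of Lemma~\ref{lem:units-Cen}, this becomes $z(Y)\equiv\lambda z^\sigma(Y)\pmod{F(Y)}$ in $A:=\F_{q^n}[Y]/(F(Y))$.

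Now $\sigma$ acts coefficientwise on $A$, and it is a ring automorphism of $A$ of order $n$ because $F\in\F_q[Y]$. Its fixed ring is $\F_q[Y]/(F)=\E_F$, since $A\cong\F_{q^n}\otimes_{\F_q}\E_F$ and the $\sigma$-fixed part of that tensor product is $\E_F$. The element $\bar z$ is a unit of $A$ satisfying $\bar z=\lambda\sigma(\bar z)$. Applying $\sigma$ repeatedly gives $\bar z=\N_{\F_{q^n}/\F_q}(\lambda)\,\bar z$, and hence $\N(\lambda)=1$. By Hilbert 90 in $\F_{q^n}$, there is then $\gamma\in\F_{q^n}^\times$ with $\lambda=\gamma\sigma(\gamma)^{-1}$. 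Set $\bar c:=\gamma^{-1}\bar z$. Then $\sigma(\bar c)=\sigma(\gamma)^{-1}\lambda^{-1}\bar z=\gamma^{-1}\bar z=\bar c$, so $\bar c$ is $\sigma$-fixed, and it is a unit. Therefore $\bar c\in\E_F^\times$ and $\bar z=\gamma\bar c$.

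Going back to $R_F$, we get $\bar g=\gamma\,c(X^n)X^t=\bar c\,\gamma X^t$, because $\bar c$ is central. The step that needs the most care is identifying the $\sigma$-fixed subring of $A$ with $\E_F$. This holds because $\{1,\dots,Y^{s-1}\}$ is an $\F_{q^n}$-basis of $A$ with $\sigma$-fixed basis vectors, so a $\sigma$-fixed element has coefficients in $\F_q$. A second point to check is that the relation is written with $\lambda$ on the correct side, so that the norm condition holds and Hilbert 90 applies.
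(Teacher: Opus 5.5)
Your proof is correct and follows essentially the same route as the paper. Both reduce $\bar g X=\lambda X\bar g$ to $z\equiv \lambda z^\sigma \pmod{F}$ in $\F_{q^n}[Y]/(F(Y))$, deduce $\N_{\F_{q^n}/\F_q}(\lambda)=1$, apply Hilbert~90, and identify the $\sigma$-fixed part with $\E_F$; your extra details (cancelling the unit $X^{t+1}$ and the basis argument for the fixed ring) only spell out steps the paper leaves implicit.
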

	
	\begin{proof}
		By Lemma~\ref{lem:units-Cen}, we may write
		\[
		\bar g=z(X^n)X^t+RF(X^n),
		\]
		where \(t\in\{0,\dots,n-1\}\), \(z(Y)\in\F_{q^n}[Y]\) and
		$
		\gcd(z(Y),F(Y))=1$.
		
		Using the multiplication rule in \(R\), we obtain
		\[
		z(X^n)X^{t+1}+RF(X^n)
		=
		\lambda z^\sigma(X^n)X^{t+1}+RF(X^n),
		\]
		hence
		\[
		z(X^n)+RF(X^n)=\lambda z^\sigma(X^n)+RF(X^n).
		\]
		Equivalently,
		\[
		z(Y)=\lambda z^\sigma(Y) \]
		in $\F_{q^n}[Y]/\fqn[Y]F(Y)$. Applying \(\sigma\) repeatedly and multiplying the resulting identities, we obtain
		\[
		z(Y)=\N_{\F_{q^n}/\F_q}(\lambda)\,z(Y),
		\]
		and therefore
		\[
		\N_{\F_{q^n}/\F_q}(\lambda)=1.
		\]
		By Hilbert's Theorem~90, there exists \(\gamma\in\F_{q^n}^\times\) such that
		\[
		\lambda=\gamma\,\sigma(\gamma)^{-1}.
		\]
		It follows that
		\[
		\gamma^{-1}z(Y)=\sigma(\gamma^{-1}z(Y)),
		\]
		so the class of \(\gamma^{-1}z(Y)\) modulo \(F(Y)\) is fixed by \(\sigma\). Since \(F(Y)\in\F_q[Y]\),
		the fixed ring of \(\F_{q^n}[Y]/(F(Y))\) under \(\sigma\) is canonically identified with
		$
		\F_q[Y]/\fq[Y]F(Y)\cong \E_F$.
		Hence there exists \(\bar c\in\E_F^\times\) such that
		\[
		z(X^n)+RF(X^n)=\bar c\,\gamma.
		\]
		Consequently,
		$
		\bar g=\bar c\,\gamma X^t,$
		as claimed.
	\end{proof}

	For later use, set
	\[
	\mathcal G_F
	:=
	\left\{
	\bar c\,\gamma X^t:
	\bar c\in\E_F^\times,\ \gamma\in\F_{q^n}^\times,\ t\in\{0,\dots,n-1\}
	\right\},
	\]
	and
	\[
	\mathcal A_F
	:=
	\left\{
	(\bar g,\bar g^{-1}\lambda):
	\bar g\in\mathcal G_F,\ \lambda\in\F_{q^n}^\times
	\right\}.
	\]
	%	In the exceptional case
	%	\[
	%	n=s=2,\qquad F(Y)=Y^2-\nu,\qquad \nu\in\F_q^\times,
	%	\]
	%	we also set
	%	\[
	%	\mathcal G_F^-
	%	:=
	%	\left\{
	%	\bar g\in\Cen_{R_F}(\F_{q^2})^\times:
	%	\bar gX\bar g^{-1}\in \F_{q^2}^\times X^{-1}
	%	\right\},
	%	\]
	%	and
	%	\[
	%	\mathcal A_F^-
	%	:=
	%	\left\{
	%	(\bar g,\bar g^{-1}\lambda X):
	%	\bar g\in\mathcal G_F^-,\ \lambda\in\F_{q^2}^\times
	%	\right\}.
	%	\]

	We can now determine the linear automorphism group.
	\begin{theorem}\label{thm:Aut-linear-skew}
		Let \(1\le k\le n-1\). If
		$
		(n,s,F)\neq (2,2,Y^2-\nu)$ for $\nu\in\F_q^\times$,
		then
		\[
		\Aut_{\E_F}(\Sk(F))=\mathcal A_F.
		\]
		%		\[
		%		\Aut_{\E_F}(\Sk(F))=\mathcal A_F^+.
		%		\]
		%		In the exceptional case
		%		\[
		%		n=s=2,\qquad F(Y)=Y^2-\nu,\qquad \nu\in\F_q^\times,
		%		\]
		%		one has
		%		\[
		%		\Aut_{\E_F}(\mathcal S_1(F))
		%		=
		%		\mathcal A_F^+\cup \mathcal A_F^-.
		%		\]
		%		Moreover, in this exceptional case the union is disjoint.
	\end{theorem}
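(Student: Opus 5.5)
We prove the two inclusions separately, working first under the assumption \(1\le k\le n/2\) and treating \(k>n/2\) by duality at the end.

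\emph{The inclusion \(\mathcal A_F\subseteq\Aut_{\E_F}(\Sk(F))\).} Let \(\bar g=\bar c\,\gamma X^t\in\mathcal G_F\) and \(\lambda\in\F_{q^n}^\times\). Then \(\bar g\,\Sk(F)\,\bar g^{-1}\lambda=\Sk(F)\). Indeed, \(\bar c\) is central and commutes with everything. Conjugation by \(\gamma\) sends \(\alpha X^i\) to \(\gamma\alpha\sigma^i(\gamma)^{-1}X^i\). Conjugation by \(X^t\) sends \(\alpha X^i\) to \(\sigma^t(\alpha)X^i\). All of these preserve the right \(\F_{q^n}\)-span of \(1,X,\dots,X^{sk-1}\), and right multiplication by \(\lambda\in\F_{q^n}^\times\) preserves it as well, since \(\F_{q^n}\) lies in the right idealiser. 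Finally, \(\mathcal A_F\) is a set of pairs of units, so it lies in the linear automorphism group.

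\emph{The inclusion \(\Aut_{\E_F}(\Sk(F))\subseteq\mathcal A_F\) for \(k\le n/2\).} Let \((\bar g,\bar h)\) satisfy \(\bar g\,\Sk(F)\,\bar h=\Sk(F)\).
\begin{enumerate}
\item By Lemma~\ref{lem:mapsincentraliser}, both \(\bar g\) and \(\bar h\) lie in \(\Cen_{R_F}(\F_{q^n})^\times\).
\item Since \((n,s,F)\) is not exceptional, Proposition~\ref{prop:structuremaps} gives \(\bar gX\bar g^{-1}=\beta X\) for some \(\beta\in\F_{q^n}^\times\). Lemma~\ref{lem:G-description} then shows \(\bar g\in\mathcal G_F\).
\item By the first inclusion, \((\bar g,\bar g^{-1})\) is itself an automorphism. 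Composing with its inverse, the pair \((1,\bar g\bar h)\) stabilises \(\Sk(F)\), so \(\bar h':=\bar g\bar h\) lies in the right idealiser.
\item By Proposition~\ref{prop:invariantsSNsk}, the right idealiser is \(\F_{q^n}\). Hence \(\bar h'=\lambda\in\F_{q^n}^\times\), and therefore \(\bar h=\bar g^{-1}\lambda\), so \((\bar g,\bar h)\in\mathcal A_F\).
\end{enumerate}
This completes the case \(k\le n/2\).

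\emph{The case \(k>n/2\).} Here \(n>2\), so the exceptional case does not arise. We use Delsarte duality as in the proof of Theorem~\ref{thm:comparison-skew-model}. A pair \((U,V)\) stabilises \(\mathcal D\) if and only if \((V^{-\top},U^{-\top})\) stabilises \(\mathcal D^\perp\). By \cite[Proposition~5.9]{gomez2025adjoint}, \(\mathcal D^\perp\) is equivalent to \(\mathcal S_{n,s,n-k}(\widehat F)\) with \(n-k<n/2\). The first part of the argument gives \(\Aut_{\E_F}\bigl(\mathcal S_{n-k}(\widehat F)\bigr)=\mathcal A_{\widehat F}\), of order \(n(q^s-1)(q^n-1)^2/(q-1)\). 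Conjugating by the equivalence preserves this order, so \(|\Aut_{\E_F}(\Sk(F))|\) equals the same number.

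It remains to see that \(|\mathcal A_F|\) equals this order. In \(\mathcal G_F\), the elements \(\bar c\gamma\) form a group of order \((q^s-1)(q^n-1)/(q-1)\), since \(\E_F^\times\cap\F_{q^n}^\times=\F_q^\times\). The exponent \(t\) contributes a factor \(n\). The pairs in \(\mathcal A_F\) are parametrised injectively by \((\bar g,\lambda)\), contributing a further factor \(q^n-1\). Since \(\mathcal A_F\subseteq\Aut_{\E_F}(\Sk(F))\) by the first inclusion and the two finite sets have the same cardinality, equality follows.

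The main obstacle is the duality step. One must check that the adjoint equivalence of \cite{gomez2025adjoint} transports automorphism groups with the same order. One must also verify that the elements \(\bar c\gamma X^t\) are pairwise distinct, which holds by the uniqueness of the representative of degree \(<ns\). If the counting approach fails, the fallback is to transport \(\mathcal A_{\widehat F}\) explicitly through the adjoint map and identify the result with \(\mathcal A_F\).
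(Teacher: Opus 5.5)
Your proposal is correct and follows the paper's proof essentially step for step. It proves the inclusion $\mathcal A_F\subseteq\Aut_{\E_F}(\Sk(F))$ via $\bar g\,\Sk(F)=\Sk(F)\,\bar g$, and for $k\le n/2$ it uses the same chain: Lemma~\ref{lem:mapsincentraliser}, then Proposition~\ref{prop:structuremaps}, then Lemma~\ref{lem:G-description}, then the right idealiser from Proposition~\ref{prop:invariantsSNsk}. For $k>n/2$ it uses the same Delsarte-duality-plus-cardinality argument. The only cosmetic difference is that you compute $|\mathcal A_F|$ explicitly, whereas the paper just notes $|\mathcal A_F|=|\mathcal A_{\widehat F}|$ and defers the explicit count to the subsequent corollary.
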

	
	\begin{proof}
		We first show that
		\[
		\mathcal A_F\subseteq \Aut_{\E_F}(\Sk(F)).
		\]
		Let
		\[
		\bar g=\bar c\,\gamma X^t\in\mathcal G_F,
		\]
		with
		\(\bar c\in\E_F^\times\), \(\gamma\in\F_{q^n}^\times\), and
		\(t\in\{0,\dots,n-1\}\). For
		\[
		\overline a
		=
		\sum_{i=0}^{sk-1}\alpha_iX^i+RF(X^n)\in\Sk(F),
		\]
		set
		\[
		\overline a'
		:=
		\sum_{i=0}^{sk-1}
		\gamma\,\sigma^t(\alpha_i)\,\sigma^i(\gamma)^{-1}X^i
		+
		RF(X^n).
		\]
		Then \(\overline a'\in\Sk(F)\), and a direct computation gives
		\[
		\bar g\,\overline a=\overline a'\,\bar g.
		\]
		Hence
		\begin{equation}\label{eq:ga=a'g}
			\bar g\,\Sk(F)=\Sk(F)\,\bar g.	
		\end{equation}
		Therefore, for every \(\lambda\in\F_{q^n}^\times\),
		\[
		\bar g\,\Sk(F)\,\bar g^{-1}\lambda
		=
		\Sk(F)\lambda
		=
		\Sk(F).
		\]
		Thus
		\[
		(\bar g,\bar g^{-1}\lambda)\in\Aut_{\E_F}(\Sk(F)),
		\]
		and hence
		\[
		\mathcal A_F\subseteq \Aut_{\E_F}(\Sk(F)).
		\]
		
		Assume now that \(1\le k\le n/2\), and let
		\[
		(\bar g,\bar h)\in\Aut_{\E_F}(\Sk(F)).
		\]
		By Lemma~\ref{lem:mapsincentraliser}, both \(\bar g\) and \(\bar h\) lie in
		\(\Cen_{R_F}(\F_{q^n})^\times\). By Proposition~\ref{prop:structuremaps}, under the assumption that $(n,s,F)\neq (2,2,Y^2-\nu)$, we have
		\[
		\bar gX\bar g^{-1}=\beta X+RF(X^n)
		\]
		for some \(\beta\in\F_{q^n}^\times\).
		Equivalently,
		\[
		\bar gX=\beta X\bar g.
		\]
		By Lemma~\ref{lem:G-description}, we get
		\[
		\bar g\in\mathcal G_F.
		\]
		Hence, as in the proof of \eqref{eq:ga=a'g},
		$
		\bar g\,\Sk(F)=\Sk(F)\,\bar g.$
		Since
		$
		\Sk(F)
		=
		\bar g\,\Sk(F)\,\bar h
		=
		\Sk(F)\,\bar g\bar h,$
		we obtain
		\[
		\bar g\bar h\in \rid(\Sk(F))^\times.
		\]
		By Proposition~\ref{prop:invariantsSNsk},
		\[
		\rid(\Sk(F))=\F_{q^n}.
		\]
		Therefore there exists \(\lambda\in\F_{q^n}^\times\) such that
		\[
		\bar g\bar h=\lambda+RF(X^n),
		\]
		and hence
		$
		\bar h=\bar g^{-1}\lambda$.
		Thus
		\[
		(\bar g,\bar h)\in\mathcal A_F.
		\]
		We have proved the theorem for \(1\le k\le n/2\). 
		
		If \(k>n/2\), then necessarily
		we are not in the exceptional case \(n=2\), since \(1\le k\le n-1\). So, assume finally that \(k>n/2\). By \cite[Proposition~5.9]{gomez2025adjoint}, the dual code 
		\(\mathcal C^\perp\) of $\C=\cS_k(F)$ is equivalent to a code of the form
		\[
		\widehat{\mathcal C}:=\mathcal S_{n,s,n-k}(\widehat F),
		\]
		where \(\widehat F\) denotes the monic reciprocal polynomial associated with \(F\). Since
		\(n-k\le n/2\), the previous part applies to \(\widehat{\mathcal C}\). In particular,
		\[
		\bigl|\Aut_{\E_{\widehat F}}(\widehat{\mathcal C})\bigr|
		=
		\bigl|\mathcal A_{\widehat{F}}\bigr|,
		\]
		where
		\[
		\mathcal A_{\widehat{F}}
		:=
		\left\{
		(\bar g,\bar g^{-1}\lambda)
		:
		\bar g\in \mathcal G_{\widehat{F}},\ \lambda\in \F_{q^n}^\times
		\right\},
		\]
		and
		\[
		\mathcal G_{\widehat{F}}
		=
		\left\{
		\bar c\,\gamma X^t
		:
		\bar c\in \E_{\widehat F}^\times,\ \gamma\in \F_{q^n}^\times,\ t\in\{0,\dots,n-1\}
		\right\}.
		\]
		Since equivalent codes have isomorphic linear automorphism groups, we obtain
		\[
		\bigl|\Aut_{\E_F}(\mathcal C)\bigr|
		=
		\bigl|\Aut_{\E_{\widehat F}}(\widehat{\mathcal C})\bigr|.
		\]
		Moreover, \(|\E_F^\times|=|\E_{\widehat F}^\times|=q^s-1\), so
		$
		|\mathcal G_F|=|\mathcal G_{\widehat{F}}|$ and hence
		$|\mathcal A_F|=|\mathcal A_{\widehat{F}}|$.
		Therefore
		\[
		\bigl|\Aut_{\E_F}(\mathcal C)\bigr|
		=
		\bigl|\Aut_{\E_{\widehat F}}(\widehat{\mathcal C})\bigr|
		=
		|\mathcal A_F|=|\mathcal A_{\widehat{F}}|.
		\]
		Since we already proved that \(\mathcal A_F\subseteq \Aut_{\E_F}(\mathcal C)\), it follows that
		\[
		\Aut_{\E_F}(\mathcal C)=\mathcal A_F.
		\]
		This completes the proof.
	\end{proof}

	\begin{remark}\label{rem:n=s=2.Y^2-nu}
		For $(n,s,F)=(2,2,Y^2-\nu)$ with irreducible $Y^2-\nu\in \F_q[Y]$, we have to consider one extra case in the proof of Theorem~\ref{thm:Aut-linear-skew}. According to Proposition~\ref{prop:structuremaps}, it is also possible that
		$\bar gX\bar g^{-1}=\beta X^{-1}+RF(X^2)$. One can follow the proof of Theorem~\ref{thm:Aut-linear-skew} to cover this case. However, the cyclic semifield $\cS_1(F)$ with $(n,s,F)=(2,2,Y^2-\nu)$ is isotopic to a special case of Hughes-Kleinfeld semifields; see \cite[Section 4]{brownpumplunsteele2018automorphisms} and \cite[Section 4.1]{sheekey2020new}. In particular, its autotopism group has already been determined by Sandler in \cite[Theorem 3]{sandler1962autotopism}. Therefore, in this paper, we do not have to consider this special case with $(n,s,F)=(2,2,Y^2-\nu)$.
	\end{remark}
	
	As a consequence, one obtains a closed formula for the order of the linear automorphism group.
	\begin{corollary}\label{cor:Aut-linear-skew-order}
		Let \(1\le k\le n-1\). If
		$
		(n,s,F)\neq (2,2,Y^2-\nu)$ for $\nu\in\F_q^\times$,
		then
		\[
		|\Aut_{\E_F}(\Sk(F))|
		=
		n\,\frac{(q^s-1)(q^n-1)^2}{q-1}.
		\]
	\end{corollary}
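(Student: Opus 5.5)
By Theorem~\ref{thm:Aut-linear-skew}, outside the excluded case we have $\Aut_{\E_F}(\Sk(F))=\mathcal A_F$. The corollary therefore reduces to computing $|\mathcal A_F|$, which I will do by counting $|\mathcal G_F|$ and then the fibres of the natural map $\mathcal A_F\to\mathcal G_F\times\F_{q^n}^\times$.

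\textbf{Step 1: count $\mathcal G_F$.} Consider the surjection
\[
\E_F^\times\times\F_{q^n}^\times\times\{0,\dots,n-1\}\to\mathcal G_F,\qquad (\bar c,\gamma,t)\mapsto \bar c\,\gamma X^t .
\]
By the direct sum decomposition in Proposition~\ref{prop:descriptioncentraliser} (equivalently Proposition~\ref{prop:semilinear-centralizer}), the exponent $t$ is determined by $\bar g$, since the elements $z(X^n)X^t$ for different $t$ lie in distinct direct summands. For fixed $t$, right-multiplying by the unit $X^{-t}$ reduces the question to when $\bar c\gamma=\bar c'\gamma'$. This holds iff $\bar c'^{-1}\bar c=\gamma'\gamma^{-1}$ lies in $\E_F\cap\F_{q^n}$ inside $R_F$.

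I claim this intersection is $\F_q$. Elements of $\E_F$ are classes $z(X^n)$ with $z\in\F_q[Y]$ of degree $<s$, while constants have degree $0$. By uniqueness of representatives of degree $<ns$, the intersection consists of the degree-$0$ elements of $\E_F$, i.e.\ $\F_q$. Hence the fibres have size exactly $q-1$ (pairs $(\bar c\mu,\mu^{-1}\gamma)$ with $\mu\in\F_q^\times$), and
\[
|\mathcal G_F|=\frac{n(q^s-1)(q^n-1)}{q-1}.
\]

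\textbf{Step 2: the map $(\bar g,\lambda)\mapsto(\bar g,\bar g^{-1}\lambda)$ is injective.} The first coordinate recovers $\bar g$, and then $\lambda=\bar g\cdot(\bar g^{-1}\lambda)$ is recovered. The images are distinct pairs because $\F_{q^n}^\times\to R_F^\times$ is injective. So
\[
|\mathcal A_F|=|\mathcal G_F|\,(q^n-1)=n\,\frac{(q^s-1)(q^n-1)^2}{q-1}.
\]

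\textbf{Main obstacle.} The only delicate point is the overlap computation $\E_F^\times\cap\F_{q^n}^\times=\F_q^\times$ inside $R_F$, together with the independence of $t$. Both follow from the uniqueness of the degree-$<ns$ representative and from $s\ge2$. In Theorem~\ref{thm:Aut-linear-skew} the group $\mathcal A_F$ was written as a set of pairs, so no further identifications are needed. One should also check that the count is independent of whether $k\le n/2$. It is, because for $k>n/2$ the theorem already established the equality $\Aut_{\E_F}(\Sk(F))=\mathcal A_F$ by cardinality.
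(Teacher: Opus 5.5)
Your proposal is correct and follows essentially the same route as the paper's proof. Both reduce, via Theorem~\ref{thm:Aut-linear-skew}, to $|\mathcal G_F|\,(q^n-1)$, split $\mathcal G_F$ according to the exponent $t$, and use $\E_F^\times\cap\F_{q^n}^\times=\F_q^\times$ to get fibres of size $q-1$. Your version also spells out why the pieces for different $t$ are disjoint and why the intersection is $\F_q^\times$, using uniqueness of degree-$<ns$ representatives; the paper only asserts these two facts.
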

	
	\begin{proof}
		By
		Theorem~\ref{thm:Aut-linear-skew},
		\[
		\Aut_{\E_F}(\Sk(F))
		=
		\left\{
		(\bar g,\bar g^{-1}\lambda):
		\bar g\in\mathcal G_F,\ \lambda\in\F_{q^n}^\times
		\right\}.
		\]
		Hence
		\[
		|\Aut_{\E_F}(\Sk(F))|
		=
		|\mathcal G_F|\,(q^n-1).
		\]
		For each \(t\in\{0,\dots,n-1\}\), set
		\[
		\mathcal G_{F,t}
		:=
		\{\bar c\,\gamma X^t:
		\bar c\in\E_F^\times,\ \gamma\in\F_{q^n}^\times\}.
		\]
		The decomposition
		\[
		\mathcal G_F
		=
		\bigsqcup_{t=0}^{n-1}\mathcal G_{F,t}
		\]
		is disjoint. Moreover,
		\[
		\E_F^\times\cap \F_{q^n}^\times=\F_q^\times.
		\]
		Therefore, for each fixed \(t\), the map
		\[
		\E_F^\times\times \F_{q^n}^\times
		\longrightarrow
		\mathcal G_{F,t},
		\qquad
		(\bar c,\gamma)\longmapsto \bar c\,\gamma X^t,
		\]
		has kernel
		\[
		\{(\mu,\mu^{-1}):\mu\in\F_q^\times\}.
		\]
		Thus
		\[
		|\mathcal G_{F,t}|
		=
		\frac{|\E_F^\times|\,|\F_{q^n}^\times|}{|\F_q^\times|}
		=
		\frac{(q^s-1)(q^n-1)}{q-1}.
		\]
		Summing over \(t\), we obtain
		\[
		|\mathcal G_F|
		=
		n\,\frac{(q^s-1)(q^n-1)}{q-1}.
		\]
		Consequently,
		\[
		|\Aut_{\E_F}(\Sk(F))|
		=
		|\mathcal G_F|\,(q^n-1)
		=
		n\,\frac{(q^s-1)(q^n-1)^2}{q-1}. \qedhere
		\]
	\end{proof}
	
	Our goal is now to determine the full automorphism group of \(\Sk(F)\). To this end, we need to relate the skew-polynomial model to the semilinear one. Recall that, by
	Theorem~\ref{th:polydescription}, if \(w_F\in \LL^\times\) is such that \(\N_{\LL/\F}(w_F)\) is a root of \(F\), then the code \(\mathcal C_k(F)\subseteq \End_{\E}(V)\) is obtained as the image of \(\Sk(F)\subseteq R_F\) under the algebra isomorphism
	$
	\Psi_{w_F}:R_F\longrightarrow \End_{\E}(V)$. As an immediate consequence of Theorem~\ref{thm:Aut-linear-skew}, we obtain the corresponding description of the linear automorphism group in the semilinear model. We are therefore in a position to determine the full automorphism group of \(\mathcal C_k(F)\), namely
	\[
	\Aut(\Sk(F)) \cong \Aut(\mathcal C_k(F))
	:=
	\left\{
	(M,N,\rho)\in \GL_{\E}(V)\times \GL_{\E}(V)\times \Aut_{\F_p}(\E):
	M\,\mathcal C_k(F)^\rho\,N=\mathcal C_k(F)
	\right\}.
	\]
	The additional semilinear symmetries are measured by the following subgroup of
	\(\Aut_{\F_p}(\F)\).
	
	\begin{definition}\label{def:HF}
		Let \(u_F\in\F\) be a root of \(F\). Set
		\[
		\mathcal H_F^+
		:=
		\left\{
		\rho\in \Aut_{\F_p}(\F):
		\rho(u_F)=\lambda u_F
		\text{ for some }\lambda\in \F_q^\times
		\right\}.
		\]
		If \(n=2\), set also
		\[
		\mathcal H_F^-
		:=
		\left\{
		\rho\in \Aut_{\F_p}(\F):
		\rho(u_F)=\lambda u_F^{-1}
		\text{ for some }\lambda\in \F_q^\times
		\right\}.
		\]
		Finally, define
		\[
		\mathcal H_F
		:=
		\begin{cases}
			\mathcal H_F^+, & \text{if } n>2,\\[1mm]
			\mathcal H_F^+\cup \mathcal H_F^-, & \text{if } n=2.
		\end{cases}
		\]
	\end{definition}
	\begin{proposition}\label{prop:field-part-aut}
		Let \(\rho\in \Aut_{\F_p}(\F)\). Then the following are equivalent:
		\begin{enumerate}
			\item[\rm (i)] There exist \(M,N\in \GL_{\E}(V)\) such that
			\[
			M\mathcal C_k(F)^\rho N=\mathcal C_k(F);
			\]
			
			\item[\rm (ii)] \(\rho\in \mathcal H_F\).
		\end{enumerate}
	\end{proposition}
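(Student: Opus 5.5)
The plan is to reduce the statement to Theorem~\ref{thm:comparison-F-G-semilinear} with $G=F$ when $1\le k\le n/2$, and to a duality argument when $k>n/2$.

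\emph{Case $1\le k\le n/2$.} Apply Theorem~\ref{thm:comparison-F-G-semilinear} with $G:=F$, taking $T_G=T_F$ and $u_G=u_F$. Condition (i) there is exactly condition (i) here. So (i) holds if and only if there exist roots $u,u'$ of $F$ and $\lambda\in\F_q^\times$ with $\rho(u)=\lambda u'$, or, only for $n=2$, $\rho(u)=\lambda u'^{-1}$.

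It remains to match this with Definition~\ref{def:HF}, where the fixed root $u_F$ appears on both sides. The roots of $F$ are the conjugates $u_F^{q^j}$. Writing $u=u_F^{q^i}$ and $u'=u_F^{q^j}$, set $\varphi:=\mathrm{Frob}_q^{\,i-j}$; note $\varphi$ fixes $\F_q$ and commutes with $\rho$, since $\Aut_{\F_p}(\F)$ is cyclic. Applying $\varphi^{-1}$ to $\rho(u_F^{q^i})=\lambda u_F^{q^j}$ gives
\[
\rho\bigl(u_F^{q^{i}}\bigr)^{q^{-j}}=\lambda u_F, \qquad\text{i.e.}\qquad (\varphi\rho)(u_F)=\lambda u_F .
\]
Thus the composite $\rho'=\mathrm{Frob}_q^{\,i-j}\rho$ satisfies the condition of Definition~\ref{def:HF} at the fixed root $u_F$.

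The step needing care is to pass from $\rho'$ back to $\rho$ itself. There are two ways to handle it.
\begin{itemize}
\item \textbf{Set level.} Show that $\mathcal H_F$ is a union of cosets of $\langle \mathrm{Frob}_q\rangle$. If $\rho(u_F)=\lambda u_F$, then $(\mathrm{Frob}_q\rho)(u_F)=\lambda u_F^q$. This is not obviously of the form $\mu u_F$, so this route may fail.
\item \textbf{Recompute at the fixed root.} Since $\mathcal C_k(F)$ depends only on the conjugacy class of $T_F$ (Theorem~\ref{th:classificationsirreducible}(b), with the $\Gal(\F_{q^n}/\F_q)$-orbit of $u_F$), first show that $\mathrm{Frob}_q\in\mathcal H$-stabilizer of the code. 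Concretely, $\mathcal C_k(F)^{\mathrm{Frob}_q}$ is linearly equivalent to $\mathcal C_k(F^{\mathrm{id}})=\mathcal C_k(F)$ by Lemma~\ref{lem:twist-to-conjugate-polynomial}, because $\mathrm{Frob}_q$ restricts to the identity on $\F_q$. So the set of $\rho$ satisfying (i) is closed under multiplication by powers of $\mathrm{Frob}_q$.
\end{itemize}

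I would therefore reformulate the definition so that $\mathcal H_F$ is read as the set of $\rho$ with $\rho(u_F)\in\F_q^\times\cdot\{\text{roots of }F\}$, or of their inverses when $n=2$. The direct check that the condition does not depend on which root is chosen comes from the Frobenius-invariance just shown. This independence is the main obstacle. I expect to handle it by interpreting ``$\rho(u_F)=\lambda u_F$'' up to the choice of root $u_F$, which is exactly condition (iii) of Theorem~\ref{thm:comparison-F-G-semilinear}.

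\emph{Case $k>n/2$.} Here $n>2$. Use Delsarte duality as in Theorem~\ref{thm:comparison-skew-model-semilinear}: an equivalence $M\mathcal C_k(F)^\rho N=\mathcal C_k(F)$ dualizes to a $\rho$-equivalence of $\mathcal S_{n,s,n-k}(\widehat F)$ with itself, via \cite[Proposition~5.9]{gomez2025adjoint}. Since $n-k<n/2$, the first case gives $\rho(u_F^{-1})=\mu u_F^{-1}$ for some $\mu\in\F_q^\times$, hence $\rho(u_F)=\mu^{-1}u_F$. The converse direction follows the same way, or directly from Lemma~\ref{lem:twist-to-conjugate-polynomial} together with the $(iii)\Rightarrow(i)$ construction in Theorem~\ref{thm:comparison-F-G}, which does not need $k\le n/2$.
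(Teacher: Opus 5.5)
Your overall route (Theorem~\ref{thm:comparison-F-G-semilinear} with $G=F$ for $k\le n/2$, duality for $k>n/2$) is the paper's. However, your handling of the choice of root contains a false step, and the reformulation of $\mathcal H_F$ you propose would make the proposition false.

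\textbf{The false step.} Your claim that $\mathcal C_k(F)^{\mathrm{Frob}_q}$ is linearly equivalent to $\mathcal C_k(F)$ misreads Lemma~\ref{lem:twist-to-conjugate-polynomial}. The code ``$\mathcal C_k(F^{\mathrm{id}})$'' produced there is built from an operator $T'$ with $T'^n=\rho(u_F)\,\id_V=u_F^q\,\id_V$, not from $T_F$. Suppose $M\mathcal C_k(F)N$ equalled that code with $M,N\in\GL_\E(V)$, and say $n>2$, $k\le n/2$. The argument for (i)$\Rightarrow$(iii) in Theorem~\ref{thm:comparison-F-G} then gives $MT_FM^{-1}=\delta T'$. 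Taking $n$-th powers, and using that $M$ commutes with $u_F\,\id_V$, yields $u_F=\N_{\K/\F_q}(\delta)\,u_F^q$, i.e.\ $u_F^{q-1}\in\F_q^\times$. This fails in general. For $q=2$, $n=3$, $s=2$, $k=1$, $F=Y^2+Y+1$, one has $\F_q^\times=\{1\}$ and $u_F^2\neq u_F$. So $\mathrm{Frob}_2$ does not satisfy (i), although it belongs to your enlarged set $\{\rho:\rho(u_F)\in\F_q^\times\cdot\{\text{roots of }F\}\}$. What is true, and immediate since $\Aut_{\F_p}(\F)$ is abelian and $\lambda\in\F_q$, is that $\mathcal H_F$ does not depend on which root is fixed. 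That is a different property from being closed under multiplication by $\mathrm{Frob}_q$, which is what your argument needs and which is false.

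\textbf{The missing point.} No passage from $\varphi\rho$ back to $\rho$ is needed, because the roots in condition (iii) are not free. The proofs of (i)$\Rightarrow$(iii) and (iii)$\Rightarrow$(i) in Theorem~\ref{thm:comparison-F-G} use exactly the roots defined by $T_F^n=u_F\,\id_V$ and $T_G^n=u_G\,\id_V$. This is what the paper means by applying the theorem ``with the fixed root $u_F$''. Concretely:
\begin{itemize}
\item By Lemma~\ref{lem:twist-to-conjugate-polynomial}, (i) holds iff the code spanned by powers of $T'$ (with $T'^n=\rho(u_F)\,\id_V$) is $\E$-linearly equivalent to $\mathcal C_k(F)$.
\item For $k\le n/2$, Lemma~\ref{lem:equiv-centralizer}, Proposition~\ref{prop:semilinear-collapse} and taking $n$-th powers give $\rho(u_F)=\lambda u_F$, or $\lambda u_F^{-1}$ when $n=2$.
\item Conversely, as you note, if $\rho(u_F)=\lambda u_F$, choose $\delta$ with $\N_{\K/\F_q}(\delta)=\lambda$, so that $(\delta T_F)^n=T'^n$. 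Then some $M\in\GL_\K(V)$ conjugates $T'$ to $\delta T_F$ (similarly with $T_F^{-1}$ when $n=2$). This $M$ is $\E$-linear because it commutes with $\rho(u_F)\,\id_V$ and $\rho(u_F)$ generates $\F$. This direction works for every $k$.
\end{itemize}

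You should not route through the polynomial condition (ii). It only sees $F^\tau$ and $G$, so it cannot distinguish $u_F$ from $u_F^q$; this is exactly how your weaker ``up to Frobenius'' conclusion arose. The same root bookkeeping is needed in your $k>n/2$ step: there you must know that the dual code is realised with the root $u_F^{-1}$, compatibly with the identification of the centres.
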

	
	\begin{proof}
		This is exactly the special case \(G=F\) of
		Theorem~\ref{thm:comparison-F-G-semilinear} when \(1 \leq k \leq n/2\), with the fixed root \(u_F\) of \(F\). The remaining range \(n/2 < k \leq n-1\) follows by arguing exactly as in
		\Cref{thm:comparison-skew-model-semilinear}.
	\end{proof}
	
	\begin{proposition}\label{prop:size-HF}
		The set \(\mathcal H_F\) is a cyclic subgroup of \(\Aut_{\F_p}(\F)\). Moreover,
		\[
		|\mathcal H_F^+|\mid r\,\gcd(s,q-1),
		\]
		and
		\[
		|\mathcal H_F|
		\mid
		\varepsilon_n\, r\,\gcd(s,q-1),
		\qquad
		\varepsilon_n=
		\begin{cases}
			1, & n>2,\\
			2, & n=2.
		\end{cases}
		\]
		More precisely, if \(n=2\), then either \(\mathcal H_F^-=\varnothing\), in which
		case
		\[
		\mathcal H_F=\mathcal H_F^+,
		\]
		or \(\mathcal H_F^-\neq\varnothing\), in which case, for every
		\(\rho_0\in\mathcal H_F^-\),
		\[
		\mathcal H_F^-=\rho_0\mathcal H_F^+.
		\]
		Consequently,
		\[
		|\mathcal H_F|\in
		\left\{
		|\mathcal H_F^+|,\,
		2|\mathcal H_F^+|
		\right\}.
		\]
		If, in addition, \(n=s=2\), then \(\mathcal H_F^-\neq\varnothing\).
	\end{proposition}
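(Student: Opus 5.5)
The plan is to work inside the cyclic group $\Aut_{\F_p}(\F)=\Gal(\F_{q^s}/\F_p)$, which has order $rs$ and is generated by the Frobenius $\phi\colon x\mapsto x^p$. Every subgroup of a cyclic group is cyclic, so cyclicity of $\mathcal H_F$ will follow once we know $\mathcal H_F$ is a subgroup.

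\textbf{$\mathcal H_F^+$ is a subgroup.} Take $\rho_1,\rho_2\in\mathcal H_F^+$ with $\rho_i(u_F)=\lambda_iu_F$, $\lambda_i\in\F_q^\times$. Every field automorphism preserves $\F_q$, so $\rho_1\rho_2(u_F)=\rho_1(\lambda_2)\lambda_1u_F$ with $\rho_1(\lambda_2)\lambda_1\in\F_q^\times$. Closure under composition, together with finiteness of the group, gives a subgroup.

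\textbf{The divisibility bound for $\mathcal H_F^+$.} Restrict to $\F_q$. This gives a homomorphism $\mathcal H_F^+\to\Gal(\F_q/\F_p)$, whose image has order dividing $r$. The kernel $K$ consists of those $\rho\in\Gal(\F/\F_q)$ with $\rho(u_F)=\lambda_\rho u_F$. The map $\rho\mapsto\lambda_\rho$ is a homomorphism $K\to\F_q^\times$, because $\rho$ fixes $\F_q$. It is injective, since $u_F$ generates $\F$ over $\F_q$. Hence $|K|$ divides $\gcd(|\Gal(\F/\F_q)|,q-1)=\gcd(s,q-1)$. Therefore $|\mathcal H_F^+|=|K|\cdot|\text{image}|$ divides $r\gcd(s,q-1)$.

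\textbf{The case $n=2$.} Here I would show:
\begin{itemize}
\item $\mathcal H_F^+\mathcal H_F^-\subseteq\mathcal H_F^-$ and $\mathcal H_F^-\mathcal H_F^+\subseteq\mathcal H_F^-$;
\item $\mathcal H_F^-\mathcal H_F^-\subseteq\mathcal H_F^+$.
\end{itemize}
For the second: if $\rho_i(u_F)=\lambda_iu_F^{-1}$, then $\rho_1\rho_2(u_F)=\rho_1(\lambda_2)\rho_1(u_F)^{-1}=\rho_1(\lambda_2)\lambda_1^{-1}u_F$. The mixed products are similar. So $\mathcal H_F$ is closed under composition, hence a subgroup.

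For any $\rho_0\in\mathcal H_F^-$, we have $\rho_0^{-1}\mathcal H_F^-\subseteq\mathcal H_F^+$ (note $\rho_0^{-1}\in\mathcal H_F^-$ since $\mathcal H_F$ is a group). This gives $\mathcal H_F^-=\rho_0\mathcal H_F^+$.

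The sets $\mathcal H_F^+$ and $\mathcal H_F^-$ are disjoint. Indeed, $\lambda u_F=\lambda'u_F^{-1}$ would give $u_F^2\in\F_q$, so $s\le2$. Since $s\geq 2$, that means $s=2$ and $F=Y^2-\nu$, with $u_F^{-1}=\nu^{-1}u_F$. In that case $\mathcal H_F^+=\mathcal H_F^-$ as sets. So the precise claim is $|\mathcal H_F|\in\{|\mathcal H_F^+|,2|\mathcal H_F^+|\}$; this is what we need, and it holds because $\mathcal H_F^+$ has index at most $2$ in $\mathcal H_F$. Divisibility by $2r\gcd(s,q-1)$ then follows from the order of $\mathcal H_F^+$.

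\textbf{Nonemptiness when $n=s=2$.} Let $\rho$ be the Frobenius $x\mapsto x^q$ of $\F_{q^2}/\F_q$. Then $u_F\rho(u_F)=F(0)\in\F_q^\times$, so $\rho(u_F)=F(0)u_F^{-1}$ and $\rho\in\mathcal H_F^-$.

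The main obstacle is only this bookkeeping of the $\pm$ cases, in particular the degenerate overlap when $F=Y^2-\nu$. None of the steps needs earlier results beyond basic Galois theory.
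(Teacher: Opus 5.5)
Your proof is correct and follows essentially the paper's route. The one difference is in the bound on $|\mathcal H_F^+|$: you use the restriction map to $\Gal(\F_q/\F_p)$, whose kernel embeds in both $\F_q^\times$ and $\Gal(\F/\F_q)$. The paper instead identifies $\mathcal H_F^+$ with the stabilizer of $F$ in $\F_q^\times\rtimes\Gal(\F_q/\F_p)$ and takes $\gcd(r(q-1),rs)$.

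One step needs a different justification. Knowing that $\mathcal H_F$ is a group only gives $\rho_0^{-1}\in\mathcal H_F^+\cup\mathcal H_F^-$, not $\rho_0^{-1}\in\mathcal H_F^-$. The fix is immediate: apply $\rho_0^{-1}$ to $\rho_0(u_F)=\lambda_0u_F^{-1}$ to get $\rho_0^{-1}(u_F)=\rho_0^{-1}(\lambda_0)\,u_F^{-1}$, as the paper does.
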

	
	\begin{proof}
		It is easy to check that \(\mathcal H_F^+\) is a subgroup of
		\(\Aut_{\F_p}(\F)\). Consider the natural action of
		\(\F_q^\times\rtimes\Gal(\F_q/\F_p)\) on monic polynomials of degree \(s\)
		given by
		\[
		(\lambda,\tau)\cdot P(Y)
		=
		\lambda^s P^\tau(\lambda^{-1}Y).
		\]
		Then the stabilizer of \(F\) is
		\[
		\Stab^+(F)
		:=
		\left\{
		(\lambda,\tau)\in \F_q^\times\rtimes\Gal(\F_q/\F_p):
		F^\tau(Y)=\lambda^sF(\lambda^{-1}Y)
		\right\}.
		\]
		The map
		\[
		\mathcal H_F^+\longrightarrow \Stab^+(F),
		\qquad
		\rho\longmapsto (\lambda,\rho|_{\F_q}),
		\]
		where \(\lambda\) is determined by \(\rho(u_F)=\lambda u_F\), is a bijection.
		Indeed, applying \(\rho\) to the relation \(F(u_F)=0\) gives
		\[
		F^{\rho|_{\F_q}}(\lambda u_F)=0,
		\]
		and hence
		\[
		F^{\rho|_{\F_q}}(Y)=\lambda^sF(\lambda^{-1}Y).
		\]
		Conversely, if \((\lambda,\tau)\in\Stab^+(F)\), then \(\lambda u_F\) is a root
		of \(F^\tau\), and therefore there is a unique automorphism
		\(\rho\in\Aut_{\F_p}(\F)\) extending \(\tau\) and satisfying
		\[
		\rho(u_F)=\lambda u_F.
		\]
		Thus
		\[
		|\mathcal H_F^+|=|\Stab^+(F)|.
		\]
		Since \(\Stab^+(F)\) is a subgroup of
		\(\F_q^\times\rtimes\Gal(\F_q/\F_p)\), its order divides \(r(q-1)\). On the
		other hand, \(\mathcal H_F^+\) is a subgroup of the cyclic group
		\(\Aut_{\F_p}(\F)\), whose order is \(rs\). Hence
		\[
		|\mathcal H_F^+|\mid \gcd(r(q-1),rs)
		=
		r\,\gcd(s,q-1).
		\]
		
		If \(n>2\), then \(\mathcal H_F=\mathcal H_F^+\), and the assertion follows.
		Assume from now on that \(n=2\). If \(\mathcal H_F^-=\varnothing\), then again
		\(\mathcal H_F=\mathcal H_F^+\).
		
		Suppose therefore that \(\mathcal H_F^-\neq\varnothing\), and fix
		\(\rho_0\in\mathcal H_F^-\). Write
		\[
		\rho_0(u_F)=\lambda_0u_F^{-1},
		\qquad
		\lambda_0\in\F_q^\times.
		\]
		We claim that
		\[
		\mathcal H_F^-=\rho_0\mathcal H_F^+.
		\]
		Let \(\eta\in\mathcal H_F^+\), say
		\[
		\eta(u_F)=\mu u_F,
		\qquad
		\mu\in\F_q^\times.
		\]
		Then
		\[
		(\rho_0\eta)(u_F)
		=
		\rho_0(\mu)\lambda_0u_F^{-1},
		\]
		with \(\rho_0(\mu)\lambda_0\in\F_q^\times\). Hence
		\(\rho_0\eta\in\mathcal H_F^-\), and so
		\[
		\rho_0\mathcal H_F^+\subseteq\mathcal H_F^-.
		\]
		Conversely, let \(\rho\in\mathcal H_F^-\), say
		\[
		\rho(u_F)=\lambda u_F^{-1},
		\qquad
		\lambda\in\F_q^\times.
		\]
		Set \(\eta:=\rho_0^{-1}\rho\). Since
		\[
		\rho_0^{-1}(u_F)=\rho_0^{-1}(\lambda_0)u_F^{-1},
		\]
		we obtain
		\[
		\eta(u_F)
		=
		\rho_0^{-1}(\lambda)\,
		\rho_0^{-1}(\lambda_0)^{-1}u_F.
		\]
		Thus \(\eta\in\mathcal H_F^+\), and consequently
		\[
		\rho=\rho_0\eta\in\rho_0\mathcal H_F^+.
		\]
		This proves the claim.
		
		Furthermore,
		\[
		\rho_0^2(u_F)
		=
		\rho_0(\lambda_0)\lambda_0^{-1}u_F,
		\]
		and therefore \(\rho_0^2\in\mathcal H_F^+\). Since
		\(\Aut_{\F_p}(\F)\) is abelian, it follows that
		\[
		\mathcal H_F
		=
		\mathcal H_F^+\cup\rho_0\mathcal H_F^+
		=
		\langle \mathcal H_F^+,\rho_0\rangle
		\]
		is a cyclic subgroup of \(\Aut_{\F_p}(\F)\). 
		%		Hence \(\mathcal H_F\) is cyclic, being	a subgroup of the cyclic group \(\Aut_{\F_p}(\E)\).
		
		The index of \(\mathcal H_F^+\) in \(\mathcal H_F\) is at most two. Therefore
		\[
		|\mathcal H_F|\in
		\left\{
		|\mathcal H_F^+|,\,
		2|\mathcal H_F^+|
		\right\}.
		\]
		In particular,
		\[
		|\mathcal H_F|\mid 2r\,\gcd(s,q-1).
		\]
		
		It remains to prove the final assertion. Assume \(n=s=2\). Then
		\(\F=\F_{q^2}\), and the \(q\)-Frobenius automorphism
		\[
		\varphi_q:x\longmapsto x^q
		\]
		is the nontrivial element of \(\Gal(\F/\F_q)\). Since \(u_F\) has degree two
		over \(\F_q\), its conjugates over \(\F_q\) are \(u_F\) and \(u_F^q\). Hence
		\[
		u_Fu_F^q=\N_{\F/\F_q}(u_F)\in\F_q^\times,
		\]
		and therefore
		\[
		\varphi_q(u_F)
		=
		u_F^q
		=
		\N_{\F/\F_q}(u_F)\,u_F^{-1}.
		\]
		Thus \(\varphi_q\in\mathcal H_F^-\), and so
		\[
		\mathcal H_F^-\neq\varnothing. \qedhere
		\]
	\end{proof}

	We now obtain the structure of the full automorphism group.
	
	\begin{theorem}\label{thm:full-semilinear-aut}
		The projection
		\[
		\pi:\Aut(\mathcal C_k(F))\longrightarrow \Aut_{\F_p}(\F),
		\qquad
		(M,N,\rho)\longmapsto \rho
		\]
		has image \(\mathcal H_F\), and its kernel is canonically isomorphic to
		\(\Aut_{\E}(\mathcal C_k(F))\). Hence there is a short exact sequence
		\[
		1\longrightarrow \Aut_{\E}(\mathcal C_k(F))
		\longrightarrow \Aut(\mathcal C_k(F))
		\overset{\pi}{\longrightarrow}
		\mathcal H_F
		\longrightarrow 1.
		\]
	\end{theorem}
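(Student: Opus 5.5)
The plan is to first check that \(\pi\) is a group homomorphism, then identify its kernel, and finally compute its image.

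\textbf{Homomorphism.} The composition law on \(\Aut(\mathcal C_k(F))\) comes from the \(\Gamma\)-action on subsets of \(\End_\E(V)\). We use the convention \(A^\rho=L_\rho AL_\rho^{-1}\) and the identity \(L_\rho L_{\rho'}=L_{\rho\rho'}\), which holds because both maps fix the chosen basis \(\mathcal B\). From these, the triple \((M,N,\rho)\) acts as \(\mathcal D\mapsto M L_\rho\mathcal D L_\rho^{-1}N\). Composing two such actions gives a triple whose field component is the product of the field components. Hence \(\pi\) is a homomorphism.

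\textbf{Kernel.} The kernel consists of triples \((M,N,\id)\) with \(M\mathcal C_k(F)N=\mathcal C_k(F)\). The map \((M,N,\id)\mapsto (M,N)\) is therefore a canonical isomorphism \(\ker\pi\cong\Aut_\E(\mathcal C_k(F))\). This also gives exactness at the left and in the middle of the sequence.

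\textbf{Image.} The image is \(\mathcal H_F\) by Proposition~\ref{prop:field-part-aut}. Indeed, \(\rho\in\im\pi\) exactly when there exist \(M,N\in\GL_\E(V)\) with \(M\mathcal C_k(F)^\rho N=\mathcal C_k(F)\), and that proposition says this happens if and only if \(\rho\in\mathcal H_F\). This gives surjectivity onto \(\mathcal H_F\). It also shows independently that \(\mathcal H_F\) is a subgroup, consistent with Proposition~\ref{prop:size-HF}.

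\textbf{Main obstacle.} The delicate point is whether Proposition~\ref{prop:field-part-aut} genuinely covers the whole range \(1\le k\le n-1\), together with the convention dependence of the twist \(\mathcal C^\rho\). For \(k\le n/2\) it is the case \(G=F\) of Theorem~\ref{thm:comparison-F-G-semilinear}, via Lemma~\ref{lem:twist-to-conjugate-polynomial}.

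For \(k>n/2\) I would argue by Delsarte duality.
\begin{itemize}
\item A triple \((M,N,\rho)\) stabilising \(\mathcal D_F=\mathcal M(\mathcal C_k(F))\) yields a triple \((N^{-\top},M^{-\top},\rho)\) stabilising \(\mathcal D_F^\perp\), with the same \(\rho\). This uses \((\mathcal D^\rho)^\perp=(\mathcal D^\perp)^\rho\) under the trace form.
\item By \cite[Proposition~5.9]{gomez2025adjoint}, \(\mathcal D_F^\perp\) is equivalent to \(\mathcal S_{n,s,n-k}(\widehat F)\). Since \(n-k<n/2\), the small-\(k\) case applies to that code.
\item It gives \(\rho'\in\mathcal H_{\widehat F}\), where \(\rho'\) is a conjugate of \(\rho\) by the automorphism part of that equivalence. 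Because \(\Aut_{\F_p}(\F)\) is abelian, \(\rho'=\rho\), so \(\rho\in\mathcal H_{\widehat F}\).
\item Finally \(\mathcal H_{\widehat F}=\mathcal H_F\). Take \(u_F^{-1}\) as the root of \(\widehat F\). If \(\rho(u_F)=\lambda u_F\), then \(\rho(u_F^{-1})=\lambda^{-1}u_F^{-1}\), and the \(\mathcal H^-\) part is handled the same way.
\end{itemize}
Running the same correspondence backwards gives the reverse inclusion. With this in hand the short exact sequence follows.
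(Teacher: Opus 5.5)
Your proof is correct, and its skeleton is the paper's. The kernel consists of the triples with $\rho=\id$ and is identified with $\Aut_{\E}(\mathcal C_k(F))$; the image is $\mathcal H_F$ by Proposition~\ref{prop:field-part-aut}.

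You add two things.

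\textbf{Homomorphism check.} You verify, via $L_\rho L_{\rho'}=L_{\rho\rho'}$, that $\pi$ is a homomorphism. The paper takes this for granted, and it is what makes $\mathcal H_F=\im\pi$ automatically a subgroup.

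\textbf{The case $k>n/2$.} You give your own justification of Proposition~\ref{prop:field-part-aut} in this range. Since $\Aut_{\F_p}(\F)$ is abelian, $\pi(\Aut(\cdot))$ is unchanged under $\Gamma$-equivalence and under Delsarte duality. Combined with $\mathcal H_{\widehat F}=\mathcal H_F$ via $u_F\mapsto u_F^{-1}$, this settles the range. The paper instead refers to the duality argument of Theorem~\ref{thm:comparison-skew-model-semilinear}. As written, that argument works with polynomial identities, so it only yields $(\lambda,\rho|_{\F_q})\in\Stab^+(F)$. That determines $\rho$ only modulo $\Gal(\F/\F_q)$. But $\mathcal H_F^+$ is cut out by a condition on $\rho(u_F)$ itself. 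For instance, $x\mapsto x^q$ lies in $\mathcal H_F^+$ only when $u_F^{q-1}\in\F_q^\times$, even though $(1,\id)$ always stabilises $F$. So your root-level bookkeeping is the more accurate way to settle this range.

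One cosmetic slip: for the form $\mathrm{Tr}(AB^\top)$ one has $(M\mathcal D^\rho N)^\perp=M^{-\top}(\mathcal D^\perp)^\rho N^{-\top}$. The dual triple is therefore $(M^{-\top},N^{-\top},\rho)$, not $(N^{-\top},M^{-\top},\rho)$. Only the field component is used, so this does not affect your argument.
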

	
	\begin{proof}
		By definition, a triple \((M,N,\rho)\) belongs to
		\(\Aut(\mathcal C_k(F))\) if and only if
		\[
		M\mathcal C_k(F)^\rho N=\mathcal C_k(F).
		\]
		Hence, for \(\rho\in\Aut_{\F_p}(\F)\), the condition
		\(\rho\in\im(\pi)\) is equivalent to the existence of
		\(M,N\in\GL_{\E}(V)\) such that
		$
		M\mathcal C_k(F)^\rho N=\mathcal C_k(F).
		$
		By Proposition~\ref{prop:field-part-aut}, this is equivalent to
		\(\rho\in\mathcal H_F\). Therefore
		\[
		\im(\pi)=\mathcal H_F.
		\]
		
		It remains to identify the kernel. We have
		\[
		(M,N,\rho)\in \ker(\pi)
		\quad\Longleftrightarrow\quad
		\rho=\id_{\E}.
		\]
		Since \((M,N,\rho)\in\Aut(\mathcal C_k(F))\), this is equivalent to
		\[
		M\mathcal C_k(F)N=\mathcal C_k(F).
		\]
		Thus
		\[
		\ker(\pi)
		=
		\{(M,N,\id_{\E}):(M,N)\in \Aut_{\E}(\mathcal C_k(F))\}.
		\]
		Consequently, the map
		\[
		\Aut_{\E}(\mathcal C_k(F))\longrightarrow \ker(\pi),
		\qquad
		(M,N)\longmapsto (M,N,\id_{\E})
		\]
		is a canonical group isomorphism. This proves the claim.
	\end{proof}
	
	\begin{corollary}\label{cor:full-semilinear-aut-order}
		When $(n,s,F)\neq (2,2,Y^2-\nu)$ for $\nu\in \F_q^\times$, 
		\[
		|\Aut(\mathcal C_k(F))|
		=
		|\mathcal H_F|\cdot |\Aut_{\E}(\mathcal C_k(F))|.
		\]
		In particular,
		\[
		|\Aut(\mathcal C_k(F))|
		=
		|\mathcal H_F|\,
		n\,\frac{(q^s-1)(q^n-1)^2}{q-1}.
		\]
	\end{corollary}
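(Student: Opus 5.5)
The first formula follows from the short exact sequence of Theorem~\ref{thm:full-semilinear-aut}. Since $\pi$ is a surjective homomorphism onto $\mathcal H_F$ with kernel isomorphic to $\Aut_{\E}(\mathcal C_k(F))$, Lagrange's theorem (equivalently, the first isomorphism theorem for finite groups) gives
\[
|\Aut(\mathcal C_k(F))|=|\im(\pi)|\cdot|\ker(\pi)|=|\mathcal H_F|\cdot|\Aut_{\E}(\mathcal C_k(F))|.
\]
Two points need checking. First, $\Aut(\mathcal C_k(F))$ is a group with $\pi$ a homomorphism. This holds because composing $(M,N,\rho)$ and $(M',N',\rho')$ gives $(M'M^{\rho'},N^{\rho'}N',\rho'\rho)$, using $(A\mathcal D B)^{\rho'}=A^{\rho'}\mathcal D^{\rho'}B^{\rho'}$, where $A^{\rho'}=L_{\rho'}AL_{\rho'}^{-1}$. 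Second, all groups involved are finite, which is clear.

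For the explicit order, I would compute $|\Aut_{\E}(\mathcal C_k(F))|$ by transporting Corollary~\ref{cor:Aut-linear-skew-order} to the semilinear model. The map $\Psi_{w_F}\colon R_F\to\End_{\E}(V)$ of Theorem~\ref{th:polydescription} is a ring isomorphism with $\Psi_{w_F}(\Sk(F))=\mathcal C_k(F)$. Hence $(\bar g,\bar h)\mapsto(\Psi_{w_F}(\bar g),\Psi_{w_F}(\bar h))$ is a bijection, in fact a group isomorphism, from $\{(\bar g,\bar h)\in (R_F^\times)^2:\bar g\Sk(F)\bar h=\Sk(F)\}$ onto $\Aut_{\E}(\mathcal C_k(F))$. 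The only subtlety is that $\Aut_{\E_F}(\Sk(F))$ is phrased through matrices via $\mathcal M_F$. However, $\mathcal M_F$ and $\mathcal M\circ\Psi_{w_F}$ differ by an inner automorphism (Skolem--Noether, as in the proof of Proposition~\ref{prop:matrix-vs-semilinear}), and conjugating both components by $P_F$ preserves the group order. Under the hypothesis $(n,s,F)\neq(2,2,Y^2-\nu)$, Corollary~\ref{cor:Aut-linear-skew-order} then gives
\[
|\Aut_{\E}(\mathcal C_k(F))|=n\,\frac{(q^s-1)(q^n-1)^2}{q-1}.
\]
Substituting this into the first formula yields the claim.

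There is no serious obstacle here. The substantive work lies in Theorem~\ref{thm:Aut-linear-skew} and Proposition~\ref{prop:field-part-aut}, which are assumed. The one step deserving care is the identification of the linear automorphism groups across the two models, which reduces to the Skolem--Noether argument above. The exceptional triple is excluded precisely because Corollary~\ref{cor:Aut-linear-skew-order} is unavailable there.
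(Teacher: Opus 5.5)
Your proposal is correct and matches the paper's proof: the first identity follows from the short exact sequence of Theorem~\ref{thm:full-semilinear-aut}, and the explicit order follows by transporting Corollary~\ref{cor:Aut-linear-skew-order} to the semilinear model via $\Psi_{w_F}$. Your additional checks are sound, namely the group law on triples making $\pi$ a homomorphism and the Skolem--Noether compatibility between $\mathcal M_F$ and $\mathcal M\circ\Psi_{w_F}$; the paper leaves both implicit.
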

	
	\begin{proof}
		The first identity follows from the short exact sequence in
		Theorem~\ref{thm:full-semilinear-aut}. The second one follows from
		Corollary~\ref{cor:Aut-linear-skew-order}.
	\end{proof}
	
	\begin{remark}
		The subgroup \(\mathcal H_F\) measures precisely the additional symmetries induced by
		field automorphisms of \(\F\). In particular, the full semilinear automorphism group
		coincides with its \(\E\)-linear subgroup, under the canonical embedding, if and only if
		\(\mathcal H_F\) is trivial; equivalently,
		\[
		\Aut(\mathcal C_k(F))=\Aut_{\E}(\mathcal C_k(F))
		\qquad\Longleftrightarrow\qquad
		\mathcal H_F=1.
		\]
	\end{remark}
	
	We now turn to the special case \(k=1\), where \(\mathcal C_k(F)\) is the spread set of the cyclic
	semifield \(\mathbb S(F)\). In this situation, semilinear automorphisms of the code correspond
	precisely to autotopisms of the associated semifield. Consequently, the preceding results translate
	directly into a description of the full autotopism group of cyclic semifields.
	
	\begin{corollary}\label{cor:full-autotopism-semifield}
		Assume that \(k=1\) and $(n,s,F)\neq (2,2,Y^2-\nu)$ for $\nu\in \F_q^\times$. Let \(\mathbb S(F)\) be the cyclic semifield associated with \(F\). Then
		\[
		|\Aut(\mathbb S(F))|
		=
		|\mathcal H_F|\,
		n\,\frac{(q^s-1)(q^n-1)^2}{q-1}.
		\]
	\end{corollary}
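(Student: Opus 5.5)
The corollary is a direct translation of Corollary~\ref{cor:full-semilinear-aut-order} into the language of autotopisms, so the plan is to assemble already established identifications rather than to prove anything new.

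First, I would check that for \(k=1\) the code \(\mathcal S_1(F)\subseteq R_F\) is exactly the spread set of the cyclic semifield \(\mathbb S(F)\), realised over its right nucleus. This was recorded in the subsection on cyclic semifields: via \(\Lambda_f\colon R_F\to\End_{E_f}(R_f)\), the elements of degree at most \(s-1\) are precisely the left multiplication maps \(L_a\) of \(S_f\). Moreover, by Proposition~\ref{prop:invariantsSNsk} (centraliser \(\E_F\)) the right nucleus is \(\mathbb N_r\cong\E_F\cong\F_{q^s}\). Thus \(\End_{\E_F}(R_f)\cong M_n(\E_F)\) is exactly the right-nucleus representation used in \eqref{eq:reformulationautotopism}.

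Second, I would apply Proposition~\ref{prop:isotopy-equivalence-right-nucleus} with \(\mathcal S'=\mathcal S=\mathbb S(F)\). Via \eqref{eq:reformulationautotopism}, this gives \(\Aut(\mathbb S(F))\cong\Aut(\mathcal S_1(F))\) as triples \((A,B,\rho)\) with \(\rho\in\Aut(\E_F)=\Aut_{\F_p}(\E_F)\). One small point needs checking here: an autotopism is a triple of \(\F_p\)-linear maps, and the correspondence \((f,g,h)\leftrightarrow(A,B,\rho)\) is a group isomorphism. The cited result gives this, since \(h\) determines \(B\) and \(f\) determines \(A\), with \(g\) forced by the others.

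Third, I would transport along \(\Psi_{w_F}\). By Proposition~\ref{prop:matrix-vs-semilinear}, with the same conjugation argument applied with \(F=G\) and using \(\Theta_F(A)=P_FAP_F^{-1}\), the map \((A,B,\rho)\mapsto(P_FAP_F^{-\rho}\cdots)\) is well defined. Concretely, \((A,B,\rho)\mapsto(P_FA(P_F^{-1})^\rho,\,P_F^\rho BP_F^{-1},\,\rho)\) is a bijection \(\Aut(\mathcal S_1(F))\to\Aut(\mathcal C_1(F))\), and hence the two groups have the same order.

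Finally, I would invoke Corollary~\ref{cor:full-semilinear-aut-order} with \(k=1\). Its hypothesis \((n,s,F)\neq(2,2,Y^2-\nu)\) is exactly the standing assumption, and it gives \(|\Aut(\mathcal C_1(F))|=|\mathcal H_F|\,n\,(q^s-1)(q^n-1)^2/(q-1)\).

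The only step that needs care is the second one. There one must confirm that the autotopism group, defined with \(\F_p\)-linear maps, really matches triples that are \(\E_F\)-semilinear in the right-nucleus model. This holds because every autotopism normalises the nuclei, so it acts semilinearly over \(\mathbb N_r\). That fact is the content of the cited Proposition~\ref{prop:isotopy-equivalence-right-nucleus}, so I would simply quote it.
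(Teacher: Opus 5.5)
Your proposal is correct and follows the paper's proof. The paper also identifies $\mathcal S_1(F)$ with the right-nucleus spread set of $\mathbb S(F)$, identifies autotopisms with semilinear automorphisms of that spread set via Proposition~\ref{prop:isotopy-equivalence-right-nucleus}, and then concludes from Corollary~\ref{cor:full-semilinear-aut-order}; your explicit transport $(A,B,\rho)\mapsto(P_FA(P_F^{-1})^\rho,\,P_F^\rho BP_F^{-1},\,\rho)$ from $\Aut(\mathcal S_1(F))$ to $\Aut(\mathcal C_1(F))$ simply spells out the step the paper leaves implicit.
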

	
	\begin{proof}
		Since \(k=1\), the code \(\mathcal C_k(F)\) is the spread set of
		\(\mathbb S(F)\). Hence the full semilinear automorphism group of
		\(\mathcal C_k(F)\) identifies with the autotopism group of \(\mathbb S(F)\).
		The assertion follows from Corollary~\ref{cor:full-semilinear-aut-order}.
	\end{proof}
	
	\section*{Acknowledgments}
	Paolo Santonastaso is partially supported by the Italian National Group for Algebraic and Geometric Structures and their Applications (GNSAGA- INdAM). Yue Zhou is partially supported by the National Natural Science Foundation of China (No.\ 12371337) and the Natural Science Foundation of Hunan Province (No.\ 2023RC1003).
	
	Paolo Santonastaso and Yue Zhou are deeply grateful to the Kaiyuan International Mathematical Sciences Institute (KIMSI) in Changsha, China, for hosting their visit and providing office space and facilities, where the major part of this work was completed.

	\bibliographystyle{abbrv}
	\bibliography{biblio}
	
\end{document}